\newcommand{\ra}{\rightarrow}
\newcommand{\CC}{\mathbb C}
\newcommand{\cO}{\mathcal{O}}
\newcommand{\Ext}{\mbox{Ext}}
\newcommand{\Hom}{\mbox{Hom}}
\newcommand{\Cliff}{\mbox{Cliff}}
\newcommand{\gr}{\mbox{gr}}
\theoremstyle{plain}
\newtheorem{theorem}{Theorem}[section]
\newtheorem{lem}[theorem]{Lemma}
\newtheorem{prop}[theorem]{Proposition}
\newtheorem{cor}[theorem]{Corollary}
\newtheorem{rem}[theorem]{Remark}
\newtheorem{ex}[theorem]{Example}
\numberwithin{equation}{section}
\begin{document}
\title[BN-theory for genus 4]{Higher rank BN-theory for curves of genus 4}

\author{H. Lange}
\author{P. E. Newstead}

\address{H. Lange\\Department Mathematik\\
              Universit\"at Erlangen-N\"urnberg\\
              Cauerstra\ss e 11\\
              D-$91058$ Erlangen\\
              Germany}
              \email{lange@mi.uni-erlangen.de}
\address{P.E. Newstead\\Department of Mathematical Sciences\\
              University of Liverpool\\
              Peach Street, Liverpool L69 7ZL, UK}
\email{newstead@liv.ac.uk}

\date{\today}
\thanks{Both authors are members of the research group VBAC (Vector Bundles on Algebraic Curves). The second author 
would like to thank the Department Mathematik der Universit\"at 
        Erlangen-N\"urnberg for its hospitality}
\keywords{Curves of genus 4, semistable vector bundle, Brill-Noether theory}
\subjclass[2010]{Primary: 14H60}

\begin{abstract}
Higher rank Brill-Noether theory is completely known for curves of genus $\leq 3$.
In this paper, we investigate the theory for curves of genus 4. Some of our results apply to curves of arbitrary genus.
\end{abstract}
\maketitle

\section{Introduction}\label{intro}

Let $C$ be a smooth complex projective curve and let $B(n,d,k)$ denote the {\it Brill-Noether locus} of stable bundles on $C$ of rank 
$n$ and degree $d$ with at least $k$ independent sections (for the formal definition, see Section \ref{back}). This locus has a natural 
structure as a subscheme of the moduli space of stable bundles on $C$ of rank $n$ and degree $d$.

In the case $n=1$, the Brill-Noether loci are classical objects. The theory of such loci was given 
firm foundations in the 1980s by a number of authors (see \cite{acgh} for further details). For $n>1$, 
the study began towards the end of the 1980s and the situation is much less clear, even on a general 
curve. Although a great deal is known about non-emptiness of Brill-Noether loci (see for example 
\cite{te,bgn,m1,bmno} and many other papers), there is much that is not known. The problem is completely 
solved only for $g\le3$ (see Section \ref{back} for details), although there are strong results for hyperelliptic and bielliptic curves (see \cite{bmno} and \cite{ba}). 

Our primary object in this paper is to investigate the case of non-hyperelliptic (hence trigonal) 
curves of genus $4$. The main result of the paper (Theorem \ref{thm4.8}) concerns new upper 
bounds on $k$ for the non-emptiness of $B(n,d,k)$ and the corresponding loci $\widetilde{B}(n,d,k)$ for 
semistable bundles, which improve the presently known bounds (for which, see Proposition \ref{p1.5}) and 
are close to being best possible in the case when $C$ has two distinct trigonal bundles. We also produce 
a large number of examples of stable bundles which come close to attaining these upper bounds.
Many of these are constructed using elementary transformations, the only problem here being to prove stability.
Some of these are known, but many are new.

In Section \ref{back}, we give some background and describe some known results, which are sufficient to give 
a complete answer to the non-emptiness problem for $g\leq3$ and for hyperelliptic curves of genus $4$. In Section \ref{nonh}, 
we obtain upper bounds for non-hyperelliptic curves 
of arbitrary genus and also construct some examples for such curves. 
Section \ref{upper} contains further results on upper bounds for non-hyperelliptic curves of genus $4$, 
leading to our main result Theorem \ref{thm4.8}. In Section \ref{exist}, we construct many examples of 
stable bundles with sections.  In Section 6 we consider extremal bundles (i.e. those giving equality in our upper bounds) 
and describe all such bundles. We consider also bundles of low rank ($\leq 4$).
Finally, in Section \ref{map}, we provide a graphical representation of our results.

Our methods are inspired in particular by those of \cite{bmno} and work of Mercat \cite{m1,m2}. In addition, 
a variant of the statement of Lemma \ref{l2.4},  both statement and proof of Lemma \ref{l2.8} and the statement (for 
arbitrary $\delta$ but without proof) of Example \ref{ex5.2} are contained in unpublished work of Mercat.

\section{Background and some known results}\label{back}

Let $C$ be a smooth complex projective curve. Denote by $M(n,d)$ the moduli space of stable vector 
bundles of rank $n$ and degree $d$ and by $\widetilde M(n,d)$ the moduli space of 
S-equivalence classes of semistable bundles of rank $n$ and degree $d$. For any integer
$k \geq 1$, we define 
$$
B(n,d,k) := \{ E \in M(n,d) \; | \; h^0(E) \geq k \}
$$
and
$$
\widetilde B(n,d,k) := \{ [E] \in \widetilde M(n,d)  \;|\; h^0( \gr E) \geq k \},
$$
where $[E]$ denotes the S-equivalence class of $E$ and $\gr E$ is the graded object defined 
by a Jordan-H\"older filtration of $E$. The locus $B(n,d,k)$ has an {\it expected dimension}
\begin{equation} \label{equ2.1}
\beta(n,d,k):=n^2(g-1)+1-k(k-d+n(g-1)),
\end{equation}
known as the {\it Brill-Noether number}.
For any vector bundle $E$ on $C$, we write $n_E$ for the rank of $E$, $d_E$ for the degree of $E$ and $\mu(E)=\frac{d_E}{n_E}$ for the slope of $E$. The vector bundle $E$ is said to be {\it generated} if the evaluation map $H^0(E)\otimes {\mathcal O_C}\to E$ is surjective. 

An important method of constructing stable bundles is a special case of the {\it dual span} construction
defined as follows. Let $L$ be a generated line bundle on $C$ with $h^0(L) \geq 2$.  Consider
the evaluation sequence
$$
0 \ra E_L^* \ra H^0(L) \otimes \cO_C \ra L \ra 0.
$$
Then $E_L$ is a bundle of rank $h^0(L) -1$ and degree $d_L$ with $h^0(E_L) \geq h^0(L)$.
 It is called the {\it dual span} of $L$ and is also denoted by $D(L)$. Although $E_L$ is not 
necessarily stable, this is frequently the case. The use of the dual span has a long history, but in 
our context especially important are \cite{bu} and \cite{m1}. The construction can of course be carried out for a generated vector bundle in place of the line bundle $L$, but we shall not need this.

We are interested in investigating the non-emptiness of $B(n,d,k)$ and $\widetilde B(n,d,k)$. The results can be illustrated on the {\it Brill-Noether map} (BN-map) in which $\lambda=\frac{k}n$ is plotted against $\mu=\frac{d}n$ (see Section \ref{map} for the case of genus $4$). On this map, the {\it Brill-Noether curve} (BN-curve) is of particular significance. It is a portion of a hyperbola given by $\lambda(\lambda-\mu+g-1)=g-1$, which is equivalent to $\beta(n,d,k)=1$. So we should expect points on the BN-map to be located below or possibly just above this curve. This is by no means always the case, but it does give a good indication of where to look for non-empty Brill-Noether loci.

By Serre duality, it is sufficient 
to consider the case $d \leq n(g-1)$ and we always have $d \geq 0$. For $g = 0$ and $g=1$,
there is nothing to be done. For $g=2$, a complete answer is contained in \cite{bgn}. The next 
two known propositions completely cover the case $g=3$ and also give information for higher 
values of $g$.

\begin{prop} \label{p1.1}
Let $C$ be a curve of genus $g\geq 3$ and $0 \leq d < 2n,\; k \geq 1$.Then $B(n,d,k) \neq 
\emptyset$ if and only if one of the following holds.
\begin{enumerate}
 \item[(i)] $(n,d,k) = (1,0,1)$; 
 \item[(ii)] $(n,d,k) = (1,1,1)$;
 \item[(iii)] $d>0,\; k-n  \leq \frac{1}{g}(d-n)$ and $(n,d,k) \neq (n,n,n)$.
\end{enumerate}
Moreover, $\widetilde B(n,d,k) \neq \emptyset$ if and only if either $d=0,\;  k \leq n$ or $d > 0, \;k - n \leq \frac{1}{g}(d-n)$.
\end{prop}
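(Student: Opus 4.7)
I would split the argument into necessity and sufficiency, handling the stable and semistable versions together and isolating the degenerate cases $d=0$ and $(n,d,k)=(n,n,n)$. For \emph{necessity in degree zero}: a nonzero section of a stable $E$ of slope $0$ produces an injection $\mathcal{O}_C\hookrightarrow E$, whose saturation is a slope-$\geq 0$ sub-line bundle, which stability allows only when it is $E$ itself, forcing $n=1$ and $E\cong\mathcal{O}_C$ (case (i)). In the semistable case $\gr E$ decomposes as a direct sum of stable slope-$0$ bundles, each contributing sections only when it equals $\mathcal{O}_C$, so $h^0(\gr E)\leq n$, realised by $\mathcal{O}_C^{\,n}$.

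For \emph{necessity with $0<d<2n$}, the heart of the matter is the sharp inequality
\begin{equation*}
h^0(E)\;\leq\;n+\tfrac{d-n}{g}.
\end{equation*}
I would prove this by letting $F\subseteq E$ be the saturation of the image of the evaluation map $H^0(E)\otimes\mathcal{O}_C\to E$, noting that $H^0(F)=H^0(E)$ and $F$ is generated, and using stability of $E$ to control $\deg F$: if $F\neq E$ one uses $\mu(F)<\mu(E)$, and if $F=E$ one exploits the kernel $M$ of the evaluation sequence
\begin{equation*}
0\to M\to H^0(F)\otimes\mathcal{O}_C\to F\to 0,
\end{equation*}
which is a sub-bundle of a trivial bundle and hence of non-positive degree. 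A Riemann-Roch/Clifford-type computation on $M$ and $F$ then yields the bound, which rules out triples with $k-n>(d-n)/g$ and in particular kills the case $r<n$ in the above dichotomy. Excluding $(n,d,k)=(n,n,n)$ for $n\geq 2$ requires a further structural step: the $n$ sections embed $\mathcal{O}_C^{\,n}\hookrightarrow E$ as a full-rank subsheaf with torsion cokernel of length $n$, and for any constant vector $\vec c\in\mathbb{C}^n$ outside the common kernel of all the surjections $\mathbb{C}^n\to (E/\mathcal{O}_C^{\,n})_p$, the corresponding diagonal line subsheaf has saturation in $E$ of degree $\geq 1=\mu(E)$, contradicting stability.

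For \emph{sufficiency}, cases (i) and (ii) are realised by $\mathcal{O}_C$ and by any effective line bundle of degree one. For case (iii) I would construct stable bundles attaining every admissible triple by one of two complementary routes: (a) as general extensions $0\to\mathcal{O}_C^{\,k}\to E\to Q\to 0$ with $Q$ a suitable bundle of rank $n-k$ and degree $d$, using a dimension count on $\Ext^1(Q,\mathcal{O}_C^{\,k})$ to guarantee stability of a generic extension; or (b) as positive elementary transformations of $\mathcal{O}_C^{\,k}\oplus G$ at general points of $C$, with genericity of the transformation data ruling out destabilising subbundles. For the extra semistable range $d=0,\;k\leq n$, take $\mathcal{O}_C^{\,k}\oplus F$ with $F$ stable of rank $n-k$ and degree $0$ (available for $g\geq 2$, $n-k\geq 1$).

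\textbf{Main obstacle.} The sharp bound $h^0(E)\leq n+(d-n)/g$ is the technically hardest ingredient: one must balance the evaluation-sequence estimate against stability without losing essential slack, handling both the generated and non-generated cases. The second delicate point is ruling out $(n,n,n)$ stably — in contrast to its semistable realisation via $L_1\oplus\cdots\oplus L_n$ with $\deg L_i=1$ and $h^0(L_i)=1$ — which requires the structural destabilisation argument sketched above, together with a careful stability check for the explicit constructions used in sufficiency.
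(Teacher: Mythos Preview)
The paper does not actually prove this proposition; it records it as known, citing \cite[Theorems $B$ and $\widetilde B$]{bgn} and \cite{m1}. Your outline is broadly in the spirit of those references (evaluation map, dual span, extensions for existence), so there is no meaningful methodological difference to compare. That said, two points in your sketch do not stand as written.

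Your exclusion of $(n,n,n)$ for $n\ge2$ has the direction reversed. With $0\to\mathcal{O}_C^{\,n}\to E\to\tau\to 0$ and $\tau$ torsion, the composite $(\mathcal{O}_C^{\,n})\otimes k(p)\to E\otimes k(p)\to\tau\otimes k(p)$ is the zero map, so there are no ``surjections $\mathbb{C}^n\to(E/\mathcal{O}_C^{\,n})_p$'' coming from the sections, and a generic $\vec c$ gives a nowhere-vanishing section whose saturation is just $\mathcal{O}_C$. What you want is a $\vec c$ \emph{in} the kernel of the fibre map $\mathbb{C}^n\to E\otimes k(p)$ at some $p\in\mathrm{supp}(\tau)$ (this kernel is nonzero precisely there, by a local $\mathrm{Tor}_1$ computation); the corresponding section then vanishes at $p$, its saturation is a line subbundle of degree $\ge1=\mu(E)$, and stability fails.

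For the main inequality $h^0(E)\le n+(d-n)/g$, you correctly flag it as the hard step, but ``a Riemann--Roch/Clifford-type computation on $M$ and $F$'' does not yet isolate the mechanism. In the branch $F\subsetneq E$ you still have to bound $h^0(F)$ for a generated bundle of smaller rank---this is where the induction on rank in \cite{bgn} enters---and in the branch $F=E$ one must exploit that $M^*$ is itself generated (as a quotient of a trivial bundle) and feed it back into the same estimate. Also note a small slip: you take $F$ to be the \emph{saturation} of the evaluation image and then call it generated; only the image is generated, and the two need not coincide. None of this is fatal to the strategy, but as a sketch it does not yet contain the step that makes the cited proofs go through.
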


This is contained in \cite[Theorems $B$ and $\widetilde B$]{bgn} and \cite{m1}.

\begin{prop} \label{p1.2}
 Let $C$ be a curve of genus $g \geq 3$ and $d = 2n, \; k \geq 1$.
 Then $B(n,d,k) \neq \emptyset$ if and only if one of the following holds.
 \begin{enumerate}
  \item[(i)] $C$ non-hyperelliptic and either $k \leq \frac{n(g+1)}{g}$ or $(n,d,k) = (g-1,2g-2,g)$; moreover, $B(g-1,2g-2,g) =  \{D(K_C)\}$;
  \item[(ii)] $C$ hyperelliptic and either $k \leq n$ or $(n,d,k) = (1,2,2)$.
 \end{enumerate}
 Moreover, $\widetilde B(n,d,k) \neq \emptyset$ if and only if either $C$ is non-hyperelliptic and $k \leq \frac{ng}{g-1}$ or $C$ is hyperelliptic and $k \leq 2n$.
\end{prop}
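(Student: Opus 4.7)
The plan is to prove Proposition~\ref{p1.2} by pairing each non-emptiness claim with a matching Clifford--Mercat upper bound, treating the non-hyperelliptic and hyperelliptic cases in parallel. The slope $\mu = 2$ is critical because it is precisely the slope of the dual span $D(K_C)$, whose behaviour distinguishes the two geometries.

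On a non-hyperelliptic curve, the backbone of the existence direction is $D(K_C)$: by the classical stability theorem for dual spans of very ample line bundles (Butler~\cite{bu}), $D(K_C)$ is stable of rank $g-1$, degree $2g-2$, with $h^0 = g$, so $D(K_C) \in B(g-1,2g-2,g)$. For other $(n,2n,k)$ with $k \leq n(g+1)/g$ I would construct stable examples as extensions of $D(K_C)^{\oplus m}$ by line bundles $M_i$ of degree $2$ (which on a non-hyperelliptic curve have $h^0 \leq 1$), choosing the extension class generically to ensure stability. The corresponding direct sums are strictly semistable of slope $2$ and saturate the $\widetilde B$ bound $k \leq ng/(g-1)$. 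Hyperelliptic existence is handled by $L^{\oplus n} \in \widetilde B(n,2n,2n)$ with $L = g^1_2$, together with $L$ itself for the exceptional $(1,2,2)$; stable examples with $k \leq n$ come from general extensions involving degree-$2$ line bundles, with a genericity argument ensuring stability.

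For the upper bounds, the crucial input on a non-hyperelliptic curve is the higher-rank Clifford--Mercat inequality from \cite{m1}, which gives $h^0(E)/n \leq g/(g-1)$ for any semistable $E$ of slope $2$ and yields the $\widetilde B$ statement directly. To sharpen to the stable bound $h^0(E)/n \leq (g+1)/g$ I would analyse the saturated subsheaf $F \subset E$ generated by $H^0(E)$: if $F \subsetneq E$, stability forces $\mu(F) < 2$ and a Clifford bound on $F$ gives the inequality; if $F = E$ the bundle is generated, and its dual fits in an exact sequence $0 \to E^* \to H^0(E)^* \otimes \mathcal O_C \to M \to 0$ with $M$ a line bundle of degree $2n$ and $h^0(M) \geq h^0(E)$, so a numerical comparison using Clifford on the non-hyperelliptic curve shows that the only way to exceed $(g+1)/g$ is $(n,2n,h^0(E)) = (g-1,2g-2,g)$ with $M = K_C$. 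This same analysis delivers the uniqueness $B(g-1,2g-2,g) = \{D(K_C)\}$, since any element of this locus must be generated and hence is the dual span of its evaluation quotient $M = K_C$.

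For the hyperelliptic upper bound $k \leq n$ in the stable case of rank $\geq 2$, stability at slope $2$ forces $\Hom(L,E) = 0$, i.e.\ $h^0(E \otimes L^{-1}) = 0$; tensoring the evaluation sequence $0 \to L^{-1} \to H^0(L) \otimes \mathcal O_C \to L \to 0$ with $E$, combining the resulting cohomology injection with Serre duality and Riemann--Roch, and invoking the refined hyperelliptic Clifford inequality of \cite{m1,bmno}, yields $h^0(E) \leq n$. The main obstacle throughout is making the stable non-hyperelliptic bound sharp: one has to rule out every generated stable configuration other than $D(K_C)$, and it is this step that couples the exceptional triple $(g-1,2g-2,g)$ to the bound $n(g+1)/g$.
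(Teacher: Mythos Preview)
The paper does not argue this proposition directly; it cites Mercat \cite[Theorems~1 and~2]{m2} for the stable case and says the semistable case follows. So your sketch is an attempt to reconstruct Mercat's proof, and two of its steps do not go through as written.

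For existence on non-hyperelliptic $C$, you propose to build stable bundles as extensions of $D(K_C)^{\oplus m}$ by degree-$2$ line bundles $M_i$. But every summand here already has slope $2$, so any extension among them is strictly semistable: whichever piece sits as the subobject in $0\to A\to E\to B\to 0$ is a subsheaf with $\mu(A)=\mu(E)$, and no genericity of the extension class repairs this. Producing \emph{stable} bundles at slope exactly $2$ with $h^0$ near $n(g+1)/g$ requires a different mechanism. More seriously, in the upper-bound argument for generated $E$ you assert that the dual-span quotient $M$ in $0\to E^*\to H^0(E)^*\otimes\cO_C\to M\to 0$ is a line bundle of degree $2n$. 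In fact $\rk\, M=h^0(E)-n$, and precisely in the range you are trying to exclude (namely $h^0(E)>n(g+1)/g$, hence $h^0(E)-n>n/g$) this rank exceeds $1$ as soon as $n>g$. So the step ``Clifford on the line bundle $M$ forces $M=K_C$'' is unavailable, and with it the identification of the exceptional triple $(g-1,2g-2,g)$ collapses. The non-generated branch has a parallel softness: the subsheaf $F\subset E$ generated by $H^0(E)$ need not be semistable, so invoking a Clifford--Mercat bound on $F$ directly is not justified without a Harder--Narasimhan analysis you have not supplied.
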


\begin{proof}
For the stable case, see \cite[Theorems 1 and 2]{m2}. The semistable case is easily deducible.
\end{proof}

The next proposition completely covers the case of hyperelliptic curves of genus 4 and gives information in higher genus.

\begin{prop} \label{p1.3}
Let $C$ be a hyperelliptic curve of genus $g \geq4$ and $2n < d < 4n$. Then $\widetilde B(n,d,k) \neq \emptyset$ if and only if 
one of the following holds.
\begin{enumerate}
\item[(i)] $d=3n$ and $k \leq 2n$;
\item[(ii)] $2n<d<3n, d = 3n-g\ell - \ell'$ with $2 \leq \ell' \leq g$ and $k \leq 2n - 2\ell -2$ or $\ell' = 1$ and $k \leq 2n-2\ell -1$;
\item[(iii)] $3n<d<4n, d = 3n+g \ell +\ell'$ with $0 \leq \ell' \leq g-2$ and $k \leq 2n + 2\ell$ or $\ell' = g-1$ and $k \leq 2n +2 \ell + 1$.
\end{enumerate}
Moreover, $B(n,d,k) \neq \emptyset$ if and only if $\widetilde B(n,d,k) \neq \emptyset$ and $(n,d,k) \neq (n,3n,2n)$ with $n \geq 2$.
\end{prop}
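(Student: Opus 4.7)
The plan is to follow the approach of \cite{bmno}, reducing the problem to combinatorics on $\PP^1$ via the hyperelliptic map. Let $\pi\colon C \to \PP^1$ be the hyperelliptic double cover and $H = \pi^*\cO_{\PP^1}(1)$ the hyperelliptic line bundle (so $\deg H = 2$ and $h^0(H)=2$). For $E$ of rank $n$ and degree $d$ on $C$, the pushforward $\pi_*E$ is locally free of rank $2n$ and degree $d-n(g+1)$ on $\PP^1$, hence splits as $\bigoplus_{i=1}^{2n}\cO_{\PP^1}(a_i)$ with $a_1 \geq \cdots \geq a_{2n}$. Two facts are fundamental: $h^0(E) = h^0(\pi_*E) = \sum_i \max(a_i+1,0)$, and by adjunction $\Hom(\pi^*F,E)\cong\Hom(F,\pi_*E)$, each summand $\cO(a)\subset\pi_*E$ yields an injection $aH\hookrightarrow E$ whose saturation is a line subbundle of slope $\geq 2a$.

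For the upper bounds (necessity), semistability of $E$ constrains the splitting type of $\pi_*E$: the pointwise inequality $2a_i \leq d/n$ (hence $a_i \leq \lfloor d/(2n)\rfloor$) comes from a single line subbundle, while more delicate partial-sum inequalities come from considering several summands $\cO(a)^{\oplus m}$ simultaneously, which give $m$ independent maps $aH \to E$ whose image generates a subsheaf of rank $\leq \min(m,n)$ in $E$. One then maximises $\sum_i \max(a_i+1,0)$ subject to $\sum a_i = d - n(g+1)$ and these inequalities. The decompositions $d - 3n = \pm(g\ell+\ell')$ arise because removing $g$ units of degree from a single ``most negative'' $a_{2n}$ lowers the achievable $h^0$ by exactly $2$ (the key identity being $K_C = (g-1)H$, so $h^0(mH) = m+1$ for $0 \leq m \leq g-1$), while the residue $\ell'$ provides the final truncation, with $\ell'=1$ (respectively $\ell'=g-1$) producing the off-by-one sub-case in (ii) (resp.\ (iii)).

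For the sufficiency direction we construct explicit examples. In case (i), take a general extension $0\to H^{\oplus n} \to E \to T \to 0$ with $T=\cO_{p_1}\oplus\cdots\oplus\cO_{p_n}$ a torsion sheaf of length $n$ (choosing non-zero extension classes at each $p_i$ so that $E$ is locally free); then $E$ is semistable of rank $n$ and degree $3n$ with $h^0(E)\geq 2n$. In cases (ii) and (iii), one twists and modifies by line bundles of small degree so as to adjust the total degree in blocks matching the $g\ell+\ell'$ decomposition, essentially building $E$ from $H^{\oplus a}$-summands together with a single line bundle chosen to contribute the correct number of sections. The distinction between $B$ and $\widetilde B$ in case $(n,3n,2n)$ with $n\geq 2$ follows because an extremal $E$ forces $\pi_*E$ to have $a_i=1$ for $i\leq n$ and $a_i\leq 0$ for $i>n$, hence $H^{\oplus n}\hookrightarrow E$ with quotient of slope $3=\mu(E)$; this is a destabilising Jordan--H\"older subsheaf, ruling out stability.

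The main obstacle is the combinatorial optimisation in the upper-bound step, in particular extracting the correct partial-sum inequalities on the $a_i$ from semistability: the pointwise bound alone allows too many large summands (e.g.\ multiple copies of $\cO(1)$ when $d/n$ is close to $3$), and the refined bounds must account for the fact that several parallel maps $aH \to E$ generate a destabilising higher-rank subsheaf. Once these inequalities are established, both directions reduce to case-checking in terms of the $(\ell,\ell')$ decomposition, and the stability of constructed bundles in cases (ii), (iii) is verified by standard deformation arguments.
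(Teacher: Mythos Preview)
Your route is not the paper's. The paper proves this proposition almost entirely by citation to \cite{bmno}: the upper bound $k\le 2n+\frac{2}{g}(d-3n)$ is \cite[Theorem~6.2(1)]{bmno}, while existence in (i) and (iii), the emptiness of $B(n,3n,2n)$ for $n\ge2$, and most of (ii) are \cite[Corollary~6.1, Proposition~6.1, Example~6.1, Remark~6.2]{bmno}. The only content the paper actually supplies is the sub-case $\ell'=1$ of (ii), handled by a duality trick: one takes a stable $E$ of the target rank and degree with $h^0(E^*\otimes H^3)=2n+2\ell$ (such a bundle exists by (iii), since $E^*\otimes H^3$ has degree $6n-d=3n+g\ell+1$), then iterates the convexity inequality $2h^0(F)\le h^0(F\otimes H^*)+h^0(F\otimes H)$ to obtain $h^0(E^*\otimes K_C)=(g-2)(n+\ell)$ and concludes by Riemann--Roch and Serre duality. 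Your sketch does not isolate this sub-case, and ``twists and modifies by line bundles of small degree'' is not a substitute for this argument; in particular you give no mechanism for producing \emph{stable} (rather than semistable) bundles in (ii) and (iii).

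There is also a concrete error in your argument that $B(n,3n,2n)=\emptyset$ for $n\ge2$. First, $h^0(E)=2n$ together with the pointwise bound $a_i\le1$ does not force $a_1=\cdots=a_n=1$: for example the splitting type $\cO(1)^{\oplus n-1}\oplus\cO^{\oplus 2}\oplus\cO(a_{n+2})\oplus\cdots$ also has $h^0=2(n-1)+2=2n$. Second, even if one had $\cO(1)^{\oplus n}\hookrightarrow\pi_*E$, adjunction does not yield an injection $H^{\oplus n}\hookrightarrow E$: a monomorphism $F\hookrightarrow\pi_*E$ corresponds to a map $\pi^*F\to E$ which need not be injective (take $E=H$ and $F=\cO_{\PP^1}^{\oplus 2}$; the adjoint map is the evaluation $\cO_C^{\oplus 2}\to H$, of generic rank $1$). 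So you cannot conclude that $H^{\oplus n}$ sits inside $E$ as a destabilising subsheaf. The partial-sum constraints you allude to are precisely what is needed to repair this, and they are the substance of the results you would be re-proving from \cite{bmno}; acknowledging them as the ``main obstacle'' without deriving them leaves the proof incomplete.
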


\begin{proof}
If $B(n,d,k) \neq \emptyset$, then $k \leq 2n + \frac{2}{g}(d-3n)$ by \cite[Theorem 6.2 (1)]{bmno}. This follows also for $\widetilde B(n,d,k)$ by considering a 
Jordan-H\"older filtration.

It is clear that $\widetilde B(n,3n,2n) \neq \emptyset$ (take a direct sum of suitable line bundles). The fact that, if $n \geq 2$, $B(n,3n,2n) = \emptyset$, but 
$B(n,3n,2n-1) \neq \emptyset$ is covered by \cite[Corollary 6.1 and Proposition 6.1]{bmno}. 
For (iii) in the stable case, see \cite[Example 6.1 and Remark 6.2]{bmno}. 
The semistable case follows easily. 

For (ii), one can argue as in \cite[Example 6.1]{bmno} for $2\le \ell'\le g$. For $\ell'=1$, first use (iii) to show that there exists a stable bundle $E$ of rank $n$ and degree $d$ with $h^0(E^*\otimes H^3)=2n+2\ell$, where $H$ is the hyperelliptic line bundle. Then repeated use of the formula $2h^0(F)\le h^0(F\otimes H^*)+h^0(F\otimes H)$ for any bundle $F$ gives $h^0(E^*\otimes K_C)=(g-2)n+(g-2)\ell$. The result follows by Serre duality and Riemann-Roch.
\end{proof}

\section{Non-hyperelliptic curves}\label{nonh}

We begin with a result which is well known.

\begin{prop} \label{p1.5}
Let $E$ be a semistable bundle on a non-hyperelliptic curve $C$ of rank $n \geq 2$ and
degree $d$ with 
$ 1 \leq \mu(E) \leq 2g-3$. Then 
$$
h^0(E) \leq \frac{1}{2}(d + n).
$$
\end{prop}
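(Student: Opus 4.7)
The plan is to combine a Jordan--H\"older reduction with induction on the rank, using Clifford's theorem on a non-hyperelliptic curve as the base case.

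Given a Jordan--H\"older filtration $0=E_0\subset\cdots\subset E_m=E$ with stable factors $Q_i:=E_i/E_{i-1}$ each of slope $\mu(E)\in[1,2g-3]$, subadditivity of $h^0$ together with additivity of rank and degree reduce the bound $h^0(E)\le(d+n)/2$ to the bound for each stable factor, so I may assume $E$ is stable. The case $n=1$ is immediate from Clifford's theorem in its strict non-hyperelliptic form: equality $h^0(L)=\deg L/2+1$ would force $L\in\{\mathcal O_C,K_C\}$ or $C$ hyperelliptic, all excluded by our hypotheses, giving $h^0(L)\le(\deg L+1)/2$.

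For the inductive step with $E$ stable of rank $n\ge 2$, I would let $F\subseteq E$ be the saturation of the image of the evaluation map $H^0(E)\otimes\mathcal O_C\to E$. Then $F$ is a subbundle with $H^0(F)=H^0(E)$, rank $r\le n$, and $\deg F\ge 0$ (since $F$ contains a generated subsheaf). If $r<n$, stability of $E$ gives $\mu(F)<\mu(E)\le 2g-3$; applying the inductive hypothesis to the semistable graded pieces of the Harder--Narasimhan filtration of $F$ whose slopes lie in $[1,2g-3]$, with direct Clifford-type bounds for pieces of slope outside this range, should yield $h^0(E)=h^0(F)\le(d_F+r)/2\le(d+n)/2$ using $d_F\le d$ and $r<n$. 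If $r=n$, then $F=E$ and $E$ is generically generated; I would then dualize the evaluation sequence to produce a bundle $N^*$ of rank $k-n$ and degree $d$ with $h^0(N^*)\ge k$, and combine this with the self-duality of the asserted inequality under $E\mapsto K_C\otimes E^*$ (which preserves the slope range $[1,2g-3]$) to close the induction.

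The hardest part will be the generically-generated case $F=E$: the auxiliary bundle $N^*$ has rank $k-n$, which may exceed $n$, so a bare rank-induction does not immediately close. This case must be handled by combining the dual-span construction with Serre duality, together with careful treatment of trivial (slope-zero) pieces in the Harder--Narasimhan filtration of $F$, which otherwise would violate the naive inductive bound $(d_i+n_i)/2$ at $d_i=0$.
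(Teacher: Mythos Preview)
Your reduction from semistable to stable via a Jordan--H\"older filtration is correct and is exactly what the paper does. The divergence is in the stable case: the paper does not argue directly but cites Re \cite{re}, whose proof proceeds via multiplication maps of sections (roughly, pairing $H^0(E)$ with $H^0(K_C\otimes E^*)$ and using that $C$ is non-hyperelliptic), not by induction on the rank through the evaluation map.

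Your inductive plan has genuine gaps, some of which you flag and some of which you do not. In the case $r=n$ you correctly note that the dual span $N^*$ has rank $k-n$, which may exceed $n$, so rank-induction does not close; you gesture at combining Serre duality with the self-duality of the inequality under $E\mapsto K_C\otimes E^*$, but this symmetry only tells you the two bundles $E$ and $K_C\otimes E^*$ satisfy or fail the bound simultaneously, and gives no new leverage. In the case $r<n$ there is a further problem you only half-identify: the Harder--Narasimhan pieces of $F$ can have slope in $[0,1)$, not just slope $0$. For such a semistable piece $G$ of rank $m$ and degree $d_G<m$ one has in general only $h^0(G)\le m$ (or the BGN bound $h^0(G)\le m+\frac{1}{g}(d_G-m)$), and both of these can exceed $\frac{1}{2}(d_G+m)$; for instance $G=\mathcal{O}_C^{\oplus m}$ already gives $h^0=m>m/2$. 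So the naive sum over HN pieces does not yield $h^0(F)\le\frac{1}{2}(d_F+r)$ without a further idea controlling the low-slope part of $F$. This is precisely why the known proofs (Re, and in the globally generated case Paranjape--Ramanan) bypass rank-induction and use multiplication of sections instead.
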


\begin{proof}
For the stable case, see \cite[Propositions 3 and 4]{re}; the semistable case is easily deducible by considering a Jordan-H\"older filtration.
\end{proof}

\begin{lem} \label{l3.1}
Let $C$ be a non-hyperelliptic curve and $E$ a semistable bundle of rank $n$ and degree $d$ with $\mu(E) \geq 3$. Then
$$
h^0(E) \leq d-n.
$$
\end{lem}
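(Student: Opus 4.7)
The plan is to split on $\mu(E)$ and combine Proposition \ref{p1.5} with Serre duality plus Riemann-Roch in the range not covered by Proposition \ref{p1.5}. The pivotal quantitative observation is that the bound $h^0(E) \leq \frac{d+n}{2}$ of Proposition \ref{p1.5} already implies the desired $h^0(E) \leq d-n$ whenever $d \geq 3n$, since $\frac{d+n}{2} \leq d-n$ is equivalent to $d \geq 3n$, i.e.\ $\mu(E) \geq 3$. Hence in the range $3 \leq \mu(E) \leq 2g-3$, the conclusion follows directly from Proposition \ref{p1.5}.

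For the remaining range $\mu(E) > 2g-3$, I would rewrite the target inequality via Riemann-Roch as $h^1(E) \leq n(g-2)$, which is a meaningful estimate because non-hyperellipticity forces $g \geq 3$. Serre duality identifies $h^1(E) = h^0(E^* \otimes K_C)$, where $F := E^* \otimes K_C$ is semistable of slope $2g-2-\mu(E) < 1$. If $\mu(F) < 0$, i.e.\ $\mu(E) > 2g-2$, semistability forces $h^0(F)=0$, and Riemann-Roch gives $h^0(E) = d - n(g-1) \leq d - n$. If $0 \leq \mu(F) \leq 1$, i.e.\ $2g-3 \leq \mu(E) \leq 2g-2$, I would show $h^0(F) \leq n$: for $\mu(F) \in (0,1]$ this is the semistable part of Proposition \ref{p1.1}, since $(d_F - n)/g \leq 0$; for $\mu(F) = 0$ it is the standard bound $h^0(F) \leq n$ obtained from a Jordan-H\"older filtration of $F$, using that any stable subquotient of slope $0$ and rank $\geq 2$ has no sections, so only trivial line-bundle pieces contribute. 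In either case $h^1(E) \leq n \leq n(g-2)$ for $g \geq 3$, closing the argument.

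The main obstacle, though modest, is the narrow strip $2g-3 < \mu(E) \leq 2g-2$, where Proposition \ref{p1.5} does not apply to $E$ itself; the device is to pass to the Serre-dual bundle $E^* \otimes K_C$, whose slope drops into the range covered by Proposition \ref{p1.1}. Beyond that, the argument is a short chain of elementary manipulations with Riemann-Roch, and at no point does one need a finer tool than the results already quoted earlier in the paper.
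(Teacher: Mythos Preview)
Your proof is correct and follows essentially the same route as the paper: split at $\mu(E)=2g-3$, use Proposition~\ref{p1.5} in the first range, and in the second range bound $h^1(E)=h^0(K_C\otimes E^*)\le n$ via Proposition~\ref{p1.1} applied to the semistable bundle $K_C\otimes E^*$ of slope $<1$. The only difference is cosmetic: the paper invokes Proposition~\ref{p1.1} in one line (it already covers the slope-$0$ case, so your separate Jordan--H\"older argument there is unnecessary), whereas you spell out the subcases $\mu(F)<0$, $\mu(F)=0$, and $0<\mu(F)<1$ individually.
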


\begin{proof}
If $3 \leq \mu(E) \leq 2g-3$, then, by Proposition \ref{p1.5},
$$
h^0(E) \leq \frac{1}{2}(d+n) \leq d-n.
$$
If $\mu(E) > 2g-3$, then, by Riemann-Roch,
$$
h^0(E) = h^1(E) + d -n(g-1) \leq n+d - n(g-1) \leq d-n,
$$
since $K_C \otimes E^*$ is a semistable bundle of slope $< 1$ and we can apply Proposition \ref{p1.1}. 
 \end{proof}

\begin{prop} \label{p3.3}
Let $C$ be a non-hyperelliptic curve of genus $g \geq 4$. Then $B(n,d,k) \neq \emptyset$ for infinitely many $\mu$ with $2 < \mu < 3$ and $k > \frac{d}{2}$. 
\end{prop}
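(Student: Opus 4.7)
The plan is to apply a single positive elementary transformation to the stable bundles of slope~$2$ supplied by Proposition~\ref{p1.2}(i), producing stable bundles of slope $2+\tfrac1n\in(2,3)$ for every integer $n\geq g$.

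For each $n\geq g$, Proposition~\ref{p1.2}(i) yields $E_0\in B(n,2n,k_0)$ with $k_0:=\lfloor n(g+1)/g\rfloor\geq n+1$. Choose any point $p\in C$ and any non-zero class $\alpha\in\Ext^1(\cO_p,E_0)\cong E_0|_p$, and let
\[
0\longrightarrow E_0\longrightarrow E\longrightarrow \cO_p\longrightarrow 0
\]
be the corresponding extension. Then $E$ is a rank-$n$ vector bundle of degree $d=2n+1$ and slope $\mu(E)=2+\tfrac1n\in(2,3)$, and the long exact cohomology sequence gives $h^0(E)\geq h^0(E_0)\geq n+1>\tfrac{2n+1}{2}=\tfrac d2$. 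It therefore remains only to verify stability of $E$.

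For stability, let $F\subsetneq E$ be a subsheaf of rank $r<n$ and set $F_0:=F\cap E_0$. Since $F/F_0\hookrightarrow\cO_p$, the sheaf $F_0$ has rank $r$ and $\deg F\leq\deg F_0+1$. The saturation of $F_0$ in $E_0$ is a proper subbundle of the stable bundle $E_0$, which has slope $2$, so strict stability forces its degree to be at most $2r-1$; hence $\deg F_0\leq 2r-1$, $\deg F\leq 2r$, and $\mu(F)\leq 2<\mu(E)$. Thus $E$ is stable, and letting $n$ vary over $\{g,g+1,g+2,\ldots\}$ produces infinitely many distinct slopes $\mu\in(2,3)$ with the required property.

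The only mildly delicate step is the stability check, but it goes through for \emph{every} non-zero extension class---no genericity is required---because the bound $\mu(F)\leq 2$ inherited from $E_0$ is automatically strictly less than $\mu(E)=2+1/n$. The rest is just bookkeeping, with Proposition~\ref{p1.2}(i) doing the heavy lifting by supplying a starting bundle whose section count $\lfloor n(g+1)/g\rfloor$ already exceeds $n+\tfrac12$ as soon as $n\geq g$.
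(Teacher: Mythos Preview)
Your argument is correct. Starting from a \emph{stable} $E_0\in B(n,2n,\lfloor n(g+1)/g\rfloor)$ and applying a single positive elementary transformation, stability of $E$ follows immediately because every proper subsheaf of $E$ meets $E_0$ in a subsheaf of slope $<2$, hence has slope $\le 2<2+\tfrac1n$. No genericity on the extension class is needed, exactly as you say.

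The paper takes a different route. Rather than invoking Proposition~\ref{p1.2}(i) as a black box, it works with the explicit \emph{semistable} bundle $F=D(K_C)^{\oplus r}\oplus M_1\oplus\cdots\oplus M_s$ (with $r\le g-1$) and performs the elementary transformation on that. Because $F$ is only semistable, the stability argument is more delicate: one must choose the extension class so that its components in $\Ext^1(\CC_p,D(K_C))$ are linearly independent and those in $\Ext^1(\CC_p,M_i)$ are non-zero, and then rule out destabilising subbundles coming from direct factors of $F$. The payoff is a richer list of triples: the paper obtains $(n,d,k)=(r(g-1)+s,\,2r(g-1)+2s+1,\,rg+s)$, which for $r\ge2$ can give strictly larger $k$ than your $\lfloor n(g+1)/g\rfloor$ (e.g.\ for $g=4$, $r=3$, $s=0$ one gets $B(9,19,12)\ne\emptyset$, whereas your method yields only $B(9,19,11)\ne\emptyset$). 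These sharper examples are used later in the paper, for instance to show that the bound $k\le\tfrac d2+\tfrac n4$ fails in the range $2<\mu\le\tfrac52$. Your approach is essentially what the paper records afterwards as Example~\ref{ex3.7}; it suffices for the proposition as stated and is pleasantly clean, but the paper's construction carries more information forward.
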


\begin{proof} Suppose $r \leq g-1$ and $s \geq 0$. Consider extensions of type
\begin{equation} \label{e4.4}
0 \ra F \ra E \ra \CC_p \ra 0
\end{equation}
with 
$$
F = D(K_C)^{\oplus r} \oplus M_1 \oplus \cdots \oplus M_s,
$$ 
where $d_{M_i} = 2, h^0(M_i) = 1$ for all $i$ and $M_i \not \simeq M_j$ for $i \neq j$.
Note that $D(K_C)$ has rank $g-1$ and degree $2g-2, h^0(D(K_C)) = g$ and $D(K_C)$ is stable by \cite[Corollary 3.5]{pr}. Moreover,
$$
n_E= r(g-1)+s, \quad d_E = 2r(g-1) +2s + 1 \quad \mbox{and} \quad h^0(E) \geq rg+s.
$$
Furthermore, $\dim \Ext^1(\CC_p, D(K_C)) = g-1$. Suppose that \eqref{e4.4} is classified by $(e_1, \dots, e_r,f_1, \dots,f_s)$ where $e_1, \dots,e_r$ are linearly independent
elements of $\Ext^1(\CC_p, D(K_C))$ and $f_i \in \Ext^1(\CC_p,M_i)$ with $f_i \neq 0$ for all $i$.

Suppose now that $G$ is a subbundle of $E$ contradicting stability.
Then $d_G > 2 n_G$. This implies that $d_{F \cap G} = 2n_G$ and there exists an extension
\begin{equation} \label{e3.2}
0 \ra F \cap G \ra G \ra \CC_p \ra 0 .
\end{equation}
Moreover, $F \cap G$ must be a direct factor of $F$, since all subbundles of $F$ contradicting stability are of this form. Now \eqref{e4.4} is induced from \eqref{e3.2}
via the inclusion $F \cap G \ra F$ which contradicts the choice of $(e_1, \dots,e_r,f_1,\dots,f_s)$. So $E$ is stable.

Thus we obtain $B(n,d,k) \neq \emptyset$ for 
\begin{equation} \label{eq3.3}
(n,d,k) = (r(g-1) + s, 2r(g-1) + 2s + 1, rg + s),
\end{equation}
whenever $r\le g-1$ and $s\ge0$.
\end{proof}

\begin{rem}
{\em The case $s=1$ of the construction of Proposition \ref{p3.3} is already known to yield interesting examples of stable bundles (see \cite[Proposition 7.2]{top3}).}
\end{rem}

Our next proposition is an extension of Proposition \ref{p3.3}. We begin with a lemma.

\begin{lem} \label{lem}
Suppose that $M_1, \dots, M_s$ are line bundles on a non-hyper-
elliptic curve $C$ with $d_{M_i} =2, \; h^0(M_i) = 1$. Write $M_i = \cO_C(q_{i1} + q_{i2})$ and suppose that 
if $i \neq j$, then $q_{ik} \neq q_{jl}$ for any $k,l$.
Assume that the $M_i$ do not admit a non-zero map into any trigonal line bundle on $C$. Suppose that $F$ is a bundle such that $d_F\ge2n_F-1$ and every quotient line bundle of $F$ has degree $\geq 1$. Suppose further that any subbundle $F'$ of $F$ with $d_{F'}\ge2n_{F'}-1$ also has the property that every quotient line bundle has degree $\ge1$.  Then $F$ admits 
non-zero homomorphisms into at most $n_F$ of the $M_i$.
\end{lem}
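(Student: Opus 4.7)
The plan is to induct on $n_F$, with the recurring device being the following: if two distinct line bundles $M_a, M_b$ in the given collection both sit as sub-line-bundles of some degree-$3$ line bundle $N$, then the disjointness of the $q_{ik}$ forces the two sections of $N$ coming from the unique sections of $M_a$ and $M_b$ to be linearly independent, making $N$ a trigonal line bundle containing $M_a$, in violation of the hypothesis on the $M_i$.

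For the base case $n_F=1$, $F$ is a line bundle of degree $\ge 1$. The cases $d_F\ge 3$ and $d_F=2$ are immediate: either no non-zero maps $F\to M_i$ exist, or any such map is an isomorphism, so $F\cong M_i$ for at most one $i$. When $d_F=1$, suppose $F\hookrightarrow M_i$ and $F\hookrightarrow M_j$ with $i\ne j$; then $M_i=F(p_i)$ and $M_j=F(p_j)$ for unique points, necessarily distinct (else $M_i\cong M_j$, impossible by disjointness). The degree-$3$ bundle $N:=F(p_i+p_j)$ contains both $M_i$ and $M_j$, and their unique sections produce sections of $N$ vanishing at the divisors $q_{i1}+q_{i2}+p_j$ and $q_{j1}+q_{j2}+p_i$; the disjointness hypothesis forces these multisets to differ, so $h^0(N)\ge 2$ and $N$ is a trigonal line bundle containing $M_i$, contradicting the standing assumption.

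For the inductive step, fix $i_1$ with $F\to M_{i_1}$ non-zero and let $L\subseteq M_{i_1}$ be the image, a line bundle of degree $1$ or $2$ (with the lower bound from the quotient-line-bundle assumption). Put $K:=\ker(F\to L)$, a subbundle of $F$ of rank $n_F-1$ and degree $\ge 2(n_F-1)-1$. Since subbundles of $K$ are again subbundles of $F$ with the same degree inequality, $K$ inherits the full hypotheses of the lemma, and by induction admits non-zero maps into at most $n_F-1$ of the $M_i$. For each $j\ne i_1$ with $F\to M_j$ non-zero, I would show the restriction $K\to M_j$ is non-zero: otherwise $F\to M_j$ factors through $L$, giving a non-zero $L\to M_j$; if $d_L=2$ this immediately forces $M_{i_1}\cong M_j$, excluded by disjointness, while if $d_L=1$, writing $M_{i_1}=L(p)$ and $M_j=L(p')$ with $p\ne p'$, the same degree-$3$ construction on $L(p+p')$ produces a trigonal line bundle containing $M_{i_1}$, contradiction. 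Hence $F$ admits non-zero maps into at most $(n_F-1)+1=n_F$ of the $M_i$.

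The main obstacle is the careful verification of linear independence for the two sections of the degree-$3$ line bundle. This reduces to a multiset comparison of the two vanishing divisors, and the disjointness hypothesis on the $q_{ik}$ is used precisely to rule out the only degenerate possibility, in which all four $q$'s would be forced to coincide with the $p$'s in a pattern that itself implies $M_a\cong M_b$, closing the loop.
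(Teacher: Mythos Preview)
Your proof is correct and follows essentially the same route as the paper's: induction on $n_F$, with the base case handled via the trigonal-bundle obstruction and the inductive step by passing to the kernel of one chosen map $F\to M_{i_1}$ and showing (via the rank-$1$ case applied to the image $L=F/K$) that every other non-zero $F\to M_j$ restricts non-trivially to $K$. Your write-up is in fact more explicit than the paper's, which compresses the inductive step into the single remark that $F/F'$ is a line bundle of degree $\ge1$ and leaves the reader to supply the base-case argument for it.
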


\begin{proof}
Consider first the case $n_F = 1$ and suppose that there exist non-zero homomorphisms of $F$ to $M_{i}$ and $M_{j}$ with $i \neq j$. If $d_F \geq 2$, this is impossible. 
If $d_F = 1$, we have $F \simeq M_{i}(-p_i) \simeq M_j(-p_j)$ for some points $p_i, p_j$. Then 
$$
\cO_C(q_{i1}+q_{i2}+p_j) \simeq \cO_C(q_{j1}+q_{j2}+p_i),
$$
contradicting the assumption on the $M_i$.

Now suppose that $n_F \geq 2$ and argue by induction. If there exists a non-zero homomorphism $F \ra M_j$, then the kernel $F'$ of the homomorphism has 
$n_{F'} = n_F -1$ and $d_{F'} \geq 2n_{F'} -1$. Applying the inductive hypothesis to $F'$ and noting that $F/F'$ is a line bundle of degree $\ge1$, we see that the result follows.
\end{proof}

\begin{prop} \label{p5.14}
 Let $C$ be a non-hyperelliptic curve and $M_1, \dots, M_s$ as in Lemma \ref{lem}. Let 
 \begin{equation} \label{e5.12}
 0 \ra D(K_C)^r \oplus M_1 \oplus \dots \oplus M_s \ra E \ra \CC_{p_1} \oplus \CC_{p_2} \ra 0 
 \end{equation}
 be an exact sequence with $p_1 \neq p_2$ and $r \leq g-1$. Suppose further that \eqref{e5.12} is classified by 
 $(e_{11}, \dots, e_{1r},e_{21}, \dots, e_{2r},f_{11},f_{12}, \dots,f_{s1},f_{s2})$ with $e_{ij}$ linearly independent elements of $\Ext^1(\CC_{p_i},D(K_C))$
 and $f_{ij} \in \Ext^1(\CC_{p_j},M_i)$  non-zero elements. Then $E$ is stable if $s > r(g-1)$.
 
 In particular for $s \geq r(g-1) +1$,
 \begin{equation} \label{eq5.13} 
B(r(g-1)+s,2r(g-1) +2s+2,rg+s) \neq \emptyset.  
 \end{equation}
\end{prop}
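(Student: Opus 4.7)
The plan is to extend the argument of Proposition \ref{p3.3} to the setting of two distinct skyscraper quotients. Set $F := D(K_C)^r \oplus M_1 \oplus \cdots \oplus M_s$, which is polystable of slope $2$; then $n_E = n_F = r(g-1)+s$ and $\mu(E) = 2 + 2/n_F$. For any proper subbundle $G \subsetneq E$, let $T \subseteq \CC_{p_1} \oplus \CC_{p_2}$ be its image in the quotient. Since $F/(F\cap G) \hookrightarrow E/G$ is torsion-free, $F\cap G$ is saturated in $F$, and semistability of $F$ forces $d_{F\cap G} \leq 2n_G$, whence $d_G \leq 2n_G + \mathrm{length}(T) \leq 2n_G + 2$. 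Destabilizing $G$'s fall into a small number of cases classified by $T$ and by whether $d_{F\cap G}$ equals $2n_G$ or $2n_G - 1$.

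The cases with $d_{F\cap G} = 2n_G$ (so that $F\cap G$ has slope $2$) are handled as in Proposition \ref{p3.3}. Since $F$ is polystable of slope $2$, $F\cap G$ must then be a direct summand, of the form $D(K_C)\otimes W \oplus \bigoplus_{i\in I} M_i$ for some subspace $W \subseteq \CC^r$ and some $I \subseteq \{1,\dots,s\}$. Requiring the class of \eqref{e5.12} to lie in the image of $\Ext^1(T, F\cap G)\to \Ext^1(T, F)$, the linear independence of the $e_{ij}$ forces $W = \CC^r$ and the non-vanishing of the $f_{ij}$ forces $I = \{1,\dots,s\}$; thus $F\cap G = F$. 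This contradicts the rank bound $n_G \leq n_F/2$ arising when $d_G = 2n_G+1$, and forces $G = E$ when $d_G = 2n_G + 2$, so destabilization is ruled out in these cases.

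The main obstacle, and the case for which the hypothesis $s > r(g-1)$ is needed, is $T = \CC_{p_1}\oplus\CC_{p_2}$ together with $d_G = 2n_G + 1$, so that $d_{F\cap G} = 2n_G - 1$ and $F\cap G$ has slope $2 - 1/n_G < 2$. Here the polystable summand argument no longer applies. My plan is to invoke Lemma \ref{lem} for $F\cap G$; its hypotheses are met since every subbundle $F' \subseteq F\cap G$ is also a subbundle of $F$, so semistability of $F$ forces every quotient line bundle of any such $F'$ with $d_{F'} \geq 2n_{F'} - 1$ to have degree at least $1$. The lemma then bounds the number of indices $i$ for which $F\cap G \to M_i$ is non-zero by $n_G$. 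On the other hand, each composite $F\cap G \hookrightarrow F \twoheadrightarrow M_i$ must be non-zero, for otherwise pushing the extension class forward along $F \to M_i$ would give $(f_{i1}, f_{i2}) = 0$, contradicting the non-vanishing of the $f_{ij}$. Combining these inequalities gives $s \leq n_G$; coupled with the destabilization bound $n_G \leq n_F/2 = (r(g-1)+s)/2$, this yields $s \leq r(g-1)$, contrary to hypothesis.

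For the final assertion, extensions of the required shape exist because $r \leq g-1$ permits $r$ linearly independent $e_{ij}$ in each $(g-1)$-dimensional space $\Ext^1(\CC_{p_i}, D(K_C))$, while each $\Ext^1(\CC_{p_j}, M_i)$ is one-dimensional. The long exact sequence associated to \eqref{e5.12} gives $h^0(E) \geq h^0(F) = rg + s$, and combined with $n_E = r(g-1)+s$ and $d_E = 2r(g-1)+2s+2$ this yields \eqref{eq5.13}.
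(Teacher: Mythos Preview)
Your proof is correct and follows essentially the same approach as the paper's: both arguments intersect a destabilizing subbundle $G$ with the kernel, split into the cases $d_{F\cap G}=2n_G$ and $d_{F\cap G}=2n_G-1$, dispatch the first case via the direct-summand structure of the polystable kernel together with the genericity of the extension data, and handle the second case by invoking Lemma~\ref{lem} to bound the number of $M_i$ receiving a nonzero map from $F\cap G$, obtaining the inequality $s\le n_G\le (r(g-1)+s)/2$. Your write-up is somewhat more explicit than the paper's in tracking how the extension class factors through $F\cap G$ (the pushforward argument showing $\pi_i\circ\iota\ne0$), but the underlying idea is identical.
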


\begin{proof}
Suppose $G$ is a subbundle of $E$ contradicting stability, i.e.
$$
\frac{d_G}{n_G} \geq 2 + \frac{2}{r(g-1)+s}.
$$
Let $F := G \cap (D(K_C)^r \oplus M_1 \oplus \dots \oplus M_s)$. We have
$$
2n_F \geq d_{F} \geq d_G -2 > 2n_G -2=2n_F-2.
$$
Suppose first that $d_{F} = 2n_F$. Then $d_G = 2n_F + 1$ or $2n_F + 2$. So there exists an extension of one of the following forms
\begin{equation} \label{eq}
0 \ra F \ra G \ra \CC_{p_1} \oplus \CC_{p_2} \ra 0 
\end{equation}
or
$$
0 \ra F \ra G \ra \CC_{p_i} \ra 0
$$
with $i = 1$ or 2. Moreover, $F$ is a partial direct sum of factors of $D(K_C)^r \oplus M_1 \oplus  \cdots \oplus M_s$. This contradicts the generality assumptions on \eqref{e5.12}.

If $d_{F} = 2n_F -1$, then $d_G = 2n_F + 1$ and this 
implies that  $n_F \leq \frac{r(g-1)+s}{2}$. Moreover, there exists an extension of the form \eqref{eq}.  It follows from the semistability of  $D(K_C)^r \oplus M_1 \oplus \dots \oplus M_s$ that $F$ satisfies the hypotheses of Lemma \ref{lem}. 
The conclusion of the lemma contradicts the generality assumptions on \eqref{e5.12} provided $s > \frac{r(g-1)+s}{2}$,
i.e. $s \geq r(g-1) +1$.
\end{proof}

Let $C$ be a non-hyperelliptic curve of genus $g \geq 4$ and let $L$ be a line bundle on $C$ of degree $2g-3$ with $h^0(L) = g-1$. 
In fact, $L = K_C(-p)$ for some point $p \in C$ and it is generated. As in Section \ref{back}, define $E_L$ by the exact sequence
\begin{equation} \label{eq2.1}
0 \ra E_L^* \ra H^0(L) \otimes \cO_C \ra L \ra 0.
\end{equation}
Note that $E_L$ has rank $g-2$ and slope $2 + \frac{1}{g-2}$.

 \begin{lem} \label{l2.4}
 Let $C$ be a non-hyperelliptic curve of genus $g \geq 4$.
\begin{enumerate}
 \item $E_L$ is stable and $h^0(E_L) \leq \frac{3}{2}(g-2) + \frac{1}{2}$.
  \item Let $E$ be a bundle on $C$ of rank $n$ and degree $d$ such that $H^1(E\otimes L)=0$ and $h^0(E) > n + \frac{1}{g-1}(d-n)$. Then $h^0(E_L^* \otimes E)>0$.
\end{enumerate}\end{lem}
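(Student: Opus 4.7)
My plan is to treat the two parts separately, obtaining part (1) by comparison with the already-stable bundle $D(K_C)$, and part (2) by a direct cohomological dimension count.

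For part (1), I would exploit the fact that $L=K_C(-p)$: the inclusion $H^0(L)\hookrightarrow H^0(K_C)$ has codimension one, and $L\hookrightarrow K_C$ has cokernel $\CC_p$. Placing the evaluation sequences for $L$ and $K_C$ side by side and applying the $3\times 3$ lemma (with third column $0\to \cO_C(-p)\to \cO_C\to \CC_p\to 0$) yields
\begin{equation*}
0 \ra E_L^* \ra D(K_C)^* \ra \cO_C(-p) \ra 0,
\end{equation*}
which dualises to
\begin{equation*}
0 \ra \cO_C(p) \ra D(K_C) \ra E_L \ra 0.
\end{equation*}
Suppose now that $F\subset E_L$ is a subbundle of rank $r$ with $1\le r\le g-3$ and $\mu(F)\ge\mu(E_L) = 2+\tfrac{1}{g-2}$. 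Then $d_F\ge 2r+r/(g-2)$; since $d_F\in\ZZ$ and $r/(g-2)\in(0,1)$, this forces $d_F\ge 2r+1$. The preimage $\tilde F\subset D(K_C)$ of $F$ is then a subbundle of rank $r+1$ and degree $d_F+1\ge 2(r+1)$, contradicting the stability of $D(K_C)$ (which has slope $2$) from \cite[Corollary 3.5]{pr}. Hence $E_L$ is stable, and since $1<\mu(E_L)\le 2g-3$ for $g\ge 4$, Proposition \ref{p1.5} gives $h^0(E_L)\le \tfrac12\bigl((2g-3)+(g-2)\bigr) = \tfrac32(g-2)+\tfrac12$.

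For part (2), I would tensor \eqref{eq2.1} with $E$ and take global sections to obtain
\begin{equation*}
0 \ra H^0(E_L^*\otimes E) \ra H^0(L)\otimes H^0(E) \ra H^0(L\otimes E).
\end{equation*}
The middle term has dimension $(g-1)h^0(E)$, while the vanishing $H^1(E\otimes L)=0$ together with Riemann-Roch gives $h^0(L\otimes E)=\chi(L\otimes E)=d+n(g-2)$. The hypothesis $h^0(E)>n+(d-n)/(g-1)$ rewrites as $(g-1)h^0(E)>d+n(g-2)$, so the rightmost map fails to be injective and $H^0(E_L^*\otimes E)>0$.

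The main obstacle is constructing and exploiting the short exact sequence $0\to \cO_C(p)\to D(K_C)\to E_L\to 0$ in part (1); once this is in place, stability of $E_L$ falls out of a clean numerical comparison with $D(K_C)$, the bound on $h^0(E_L)$ is immediate from Proposition \ref{p1.5}, and part (2) is essentially a routine dimension count.
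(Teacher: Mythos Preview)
Your proof is correct. Part (2) is essentially identical to the paper's argument: both tensor the evaluation sequence \eqref{eq2.1} by $E$, use $h^1(E\otimes L)=0$ and Riemann--Roch to compute $h^0(E\otimes L)=d+n(g-2)$, and then compare with $(g-1)h^0(E)$.

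Part (1), however, takes a genuinely different route. The paper argues on the quotient side: if $E_L$ had a stable quotient $Q$ of slope $\le 2$, then $Q$ would be generated (as a quotient of $E_L$), yet Propositions \ref{p1.1} and \ref{p1.2} force $h^0(Q)\le n_Q$, a contradiction. You instead construct the exact sequence $0\to\cO_C(p)\to D(K_C)\to E_L\to 0$ and transfer the question to the already known stability of $D(K_C)$: a destabilising subbundle $F\subset E_L$ pulls back to a subbundle of $D(K_C)$ of slope $\ge 2$, contradicting \cite[Corollary 3.5]{pr}. Your argument is cleaner and also exhibits a useful structural relation between $E_L$ and $D(K_C)$; the paper's argument is more self-contained and does not depend on the specific form $L=K_C(-p)$, so it would adapt more readily to dual spans of other line bundles. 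Both routes then obtain the bound on $h^0(E_L)$ from Proposition \ref{p1.5} in the same way.
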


\begin{proof}
(1): Suppose $E_L$ has a proper quotient bundle $Q$ of slope $\leq 2$.
We can suppose that $Q$ is stable. Moreover $Q$ is generated. If $n_Q =1$, then the only possibility is $Q \simeq \cO_C$. This contradicts the fact that $h^0(E_L^*) =0$. 
If $n_Q \geq 2$, we have
$$
h^0(Q) \leq n_Q + \frac{1}{g}(d_Q - n_Q) \leq n_Q \left(1 + \frac{1}{g} \right)
$$
by Propositions \ref{p1.1} and \ref{p1.2}(i). Hence $h^0(Q) \leq n_Q$, which contradicts the fact that $Q$ is generated. Moreover,
$$
h^0(E_L) \leq \frac{3g-5}{2} = \frac{3}{2}(g-2) + \frac{1}{2}
$$ 
by Proposition \ref{p1.5}.

(2): Tensoring \eqref{eq2.1} with $E$ gives
\begin{equation} \label{equ}
0 \ra E_L^* \otimes E \ra E^{\oplus g-1} \ra E \otimes L \ra 0,
\end{equation}
Since $h^1(E \otimes L) =0$, we have $h^0(E \otimes L) = d + n(g-2)$ by Riemann-Roch and Serre duality. Hence
$$
(g-1)h^0(E) > n(g-1) +d-n = h^0(E \otimes L).
$$ 
It follows from \eqref{equ} that $h^0(E_L^* \otimes E) > 0$. 
\end{proof}

\begin{lem} \label{l2.5}
Let $C$ be a non-hyperelliptic curve of genus $g \geq 4$ and 
let $E$ be a semistable bundle on $C$ of rank $n$ and degree $d$ with $2 < \mu(E) < 2+ \frac{1}{g-3}$, such that $h^0(E) > n + \frac{1}{g-1}(d-n)$. 

Then $E_L$ can be embedded as a subbundle of $E$. In particular $\mu(E) \geq 2 + \frac{1}{g-2}$.
\end{lem}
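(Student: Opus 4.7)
The plan is to produce a nonzero homomorphism $\varphi\colon E_L \to E$ via Lemma \ref{l2.4}(2) and then upgrade it to an embedding of $E_L$ as a subbundle. First I would verify the hypotheses of Lemma \ref{l2.4}(2). The Brill--Noether bound $h^0(E) > n + \frac{1}{g-1}(d-n)$ is part of the assumption. For the vanishing $H^1(E\otimes L) = 0$, note that $\deg L = 2g-3$, so that tensoring the semistable $E$ with the line bundle $L$, dualizing, and tensoring with $K_C$ produces a semistable bundle $(E \otimes L)^* \otimes K_C$ of slope $2g - 2 - \mu(E) - (2g - 3) = 1 - \mu(E) < -1$; such a bundle has no sections, so by Serre duality $h^1(E \otimes L) = 0$. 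Lemma \ref{l2.4}(2) then yields a nonzero homomorphism $\varphi\colon E_L \to E$.

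Next I would show that $\varphi$ is injective. Let $I := \varphi(E_L) \subseteq E$. Since $E$ is locally free, $I$ is torsion-free, which forces $\ker \varphi$ to be saturated in $E_L$, so $I$ is a locally free quotient of $E_L$. If $\varphi$ were not injective, $I$ would be a proper quotient bundle of rank $r$ with $1 \le r \le g-3$, and the stability of $E_L$ (Lemma \ref{l2.4}(1)), applied in its dual form, would give $\mu(I) > \mu(E_L) = 2 + \frac{1}{g-2}$. Letting $\overline I$ be the saturation of $I$ in $E$, we have $\mu(\overline I) \ge \mu(I)$, while semistability of $E$ gives $\mu(\overline I) \le \mu(E) < 2 + \frac{1}{g-3}$. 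Multiplying by $r$, the integer $d_{\overline I}$ would have to satisfy
\[
2r \;<\; 2r + \tfrac{r}{g-2} \;<\; d_{\overline I} \;<\; 2r + \tfrac{r}{g-3} \;\le\; 2r + 1,
\]
which is impossible since the open interval $(2r, 2r+1)$ contains no integer. Hence $\varphi$ is injective.

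A nearly identical calculation then shows that $E_L$ is saturated in $E$. If not, the saturation $\overline{E_L}$ has rank $g-2$ and degree at least $2g-2$, while semistability of $E$ gives
\[
d_{\overline{E_L}} \;\le\; (g-2)\mu(E) \;<\; (g-2)\bigl(2 + \tfrac{1}{g-3}\bigr) \;=\; 2g - 3 + \tfrac{1}{g-3} \;\le\; 2g - 2,
\]
a contradiction. Thus $E_L$ embeds as a subbundle of $E$, and the final inequality $\mu(E) \ge 2 + \frac{1}{g-2}$ follows immediately from semistability.

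The main obstacle is the slope bookkeeping in the injectivity step, but the upper bound $\mu(E) < 2 + \frac{1}{g-3}$ is precisely calibrated so that the forbidden interval for $d_{\overline I}$ has length $\frac{r}{(g-2)(g-3)} < 1$ and sits strictly between the consecutive integers $2r$ and $2r+1$, ruling out any integer value and giving the contradiction we need.
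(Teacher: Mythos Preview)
Your proof is correct and follows essentially the same approach as the paper's: both use Lemma~\ref{l2.4}(2) to obtain a nonzero map $E_L\to E$, then exploit the stability of $E_L$ together with semistability of $E$ and the integrality of degrees to rule out both a nontrivial kernel and a strict saturation. Your write-up simply spells out the slope/degree bookkeeping (the interval $(2r,2r+1)$ containing no integer, and the bound $d_{\overline{E_L}}<2g-2$) more explicitly than the paper, which compresses these into the single observation that a subbundle of rank $<g-2$ and slope $>2+\tfrac{1}{g-2}$ must already have slope $\ge 2+\tfrac{1}{g-3}$.
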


\begin{proof}
Since $E\otimes L$ is semistable of slope $>2g-1$, $h^1(E\otimes L)=0$. So, by Lemma \ref{l2.4}, $h^0(E_L^* \otimes E) > 0$. Let $E_L \ra E$ be a non-zero homomorphism. 

If $E_L \ra E$ fails to be injective as a morphism of sheaves, then the stability of $E_L$ implies that $E$ has a subbundle $Q$ of rank $n_Q < g-2$ 
with $\mu(Q) > 2 + \frac{1}{g-2}$ and hence $\mu(Q) \geq 2 + \frac{1}{g-3}$. This contradicts the semistability of $E$.

If $E_L \ra E$ is injective as a morphism of sheaves, then the subbundle generated by $E_L$ has slope $\geq 2 + \frac{1}{g-2}$
with equality only if $E_L$ is a subbundle of $E$. Since $E$ is semistable, this implies that $E_L$ is a subbundle of $E$ of slope $2 + \frac{1}{g-2}$. 
\end{proof} 

\begin{ex} \label{ex3.7}
{\rm Let $C$ be a non-hyperelliptic curve, $r$ a positive integer and $n\ge rg$. By Proposition \ref{p1.2}, $B(n,2n,n+r)\ne\emptyset$. Let $F\in B(n,2n,n+r)$ and let $E$ be a bundle fitting into an exact sequence 
\begin{equation}\label{e3.9}
0 \ra F\ra E \ra \CC_q \ra 0
\end{equation}
with $q \in \CC$. It follows that $E\in B(n,2n+1,n+r)$. So $B(n,2n+1,n+r)\ne\emptyset$, giving a point
$$
(\mu, \lambda) = \left(2 + \frac1n, 1+\frac{r}n \right).
$$
in the BN-map. As $n \ra \infty$ for any fixed $r$, $(\mu, \lambda) \ra (2, 1)$.
}
\end{ex}

\begin{rem}\label{r3.10}
{\rm
For $r \leq g-1$, the BN-loci shown to be non-empty in Example \ref{ex3.7} are also shown to be non-empty in Proposition \ref{p3.3}.
On the other hand, Proposition \ref{p3.3} also includes examples not covered by Example \ref{ex3.7}. Of course, Example \ref{ex3.7} 
allows the possibility that $r\ge g$. In all cases one obtains subschemes of positive codimension in
the corresponding BN-loci.}
\end{rem}

\begin{rem}\label{r3.11}
{\rm In both Proposition \ref{p3.3} and Example \ref{ex3.7}, we have $h^0(F)>0$. This implies by Riemann-Roch that $h^1(F)>0$, so the general extension \eqref{e4.4} or \eqref{e3.9} yields a bundle $E$ with $h^0(E)=h^0(F)$. Hence $E$ is not generated. In fact, we have always $h^0(E)\ge n_E+r$ and, by Lemma \ref{l2.5}, $h^0(E)\le n_E+\frac1{g-1}(n_E+1)$. Hence, if $\frac1{g-1}(n_E+1)<r+1$, then $h^0(E)=n_E+r=h^0(F)$, implying that all $E$ given by extensions  \eqref{e4.4} or \eqref{e3.9} fail to be generated. The required condition is $s\le g-3$ for \eqref{e4.4} and $n-r(g-1)\le g-3$ for \eqref{e3.9}.

Suppose now that $r=1$. An alternative proof that $B(n,2n+1,n+1)\ne\emptyset$ for $n\ge g$ (in fact for $n\ge g-2$ and even $n\ge1$ when $g\le4$) can be given using \cite[Corollary 5.2]{bbn2}; this gives the result for general $C$ and it follows by semicontinuity for arbitrary $C$. For general $C$, the bundles constructed in this way are always generated. The condition of generality may not be needed here. It is certainly the case that, whenever $C$ is non-hyperelliptic, the bundles in $B(n,2n+1,n+1)$ which 
are elementary transformations of bundles in $B(n,2n,n+1)$ form a subscheme of positive codimension; this follows from \cite[Theorem 4.4(d)]{top3}.
}
\end{rem}

\section{Upper bounds for genus 4}\label{upper}

Let $C$ be a non-hyperelliptic curve of genus 4. In this section we will establish some upper bounds for the 
non-emptiness of $B(n,d,k)$ and $\widetilde B(n,d,k)$. Note that we already have such bounds for $0 \leq d < \frac{5n}{2}$ by Propositions \ref{p1.1}
and \ref{p1.2} and Lemma \ref{l2.5}.

\begin{lem} \label{l4.1}
{\em (i)} Let $E$ be a semistable bundle on $C$ of rank $n$ and slope $\frac{5}{2}$. Then 
$$
h^0(E) \leq \frac{3n}{2}.
$$
{\em (ii)} 
$$
B(2,5,3) = \widetilde B(2,5,3) = \{ E_L \; | \; L \simeq K_C(-p) \; \mbox{for some} \; p \in C \}.
$$
\end{lem}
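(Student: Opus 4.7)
The plan is to prove (i) by induction on $n$ using Lemma \ref{l2.5}, and then to deduce (ii) by analysing the evaluation map of a bundle in $B(2,5,3)$.

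For (i), I observe that with $g=4$ the threshold $n+\frac{1}{g-1}(d-n)$ appearing in Lemma \ref{l2.5} equals exactly $\frac{3n}{2}$ when $d=\frac{5n}{2}$, and the slope $5/2$ lies strictly in $(2,2+\frac{1}{g-3})=(2,3)$. I argue by contradiction: assuming $h^0(E)>3n/2$, Lemma \ref{l2.5} embeds $E_L$ as a subbundle of $E$ of slope $5/2$. For the base case $n=2$, $E_L$ and $E$ have the same rank and the same degree, forcing $E=E_L$, so $h^0(E)\le 3$ by Lemma \ref{l2.4}(1), contradicting $h^0(E)>3$. For $n>2$, I pass to $E/E_L$, of rank $n-2$ and slope $5/2$. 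A direct computation shows that any subbundle $F'\subset E/E_L$ of slope $>5/2$ pulls back to a subbundle $F\subset E$ of slope $>5/2$, so semistability of $E$ forces semistability of $E/E_L$. The inductive hypothesis gives $h^0(E/E_L)\le 3(n-2)/2$, and combining this with $h^0(E_L)\le 3$ via the long exact sequence of $0\to E_L\to E\to E/E_L\to 0$ yields $h^0(E)\le 3n/2$, the desired contradiction.

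For (ii), first note that $\widetilde B(2,5,3)=B(2,5,3)$: any Jordan--H\"older factor of a rank-$2$ semistable bundle of slope $5/2$ would have to be a line bundle of slope $5/2\notin\ZZ$, which is impossible, so every semistable $E$ of rank $2$ and degree $5$ is actually stable. Now take $E\in B(2,5,3)$; by (i) we have $h^0(E)=3$. Next I show $E$ is generated: if $p$ were a base point then $E(-p)$ would be semistable of rank $2$ and slope $3/2\in[1,2g-3]$ with $h^0(E(-p))\ge 3$, contradicting Proposition \ref{p1.5} which bounds this by $5/2$. Hence the evaluation map $H^0(E)\otimes\cO_C\to E$ is surjective, with kernel a line bundle $L^*$ of degree $-5$.

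To finish, I dualise to obtain a short exact sequence $0\to E^*\to H^0(E)^*\otimes\cO_C\to L\to 0$ with $L$ of degree $5$. Since $E^*$ is semistable of negative slope, it has no sections, so cohomology injects $H^0(E)^*\hookrightarrow H^0(L)$, giving $h^0(L)\ge 3$. A degree-$5$ line bundle on a genus-$4$ curve with $h^0\ge 3$ must be special, and then $K_C\otimes L^{-1}$ is an effective line bundle of degree $1$, forcing $L=K_C(-p)$ for some $p\in C$. The displayed sequence is then the evaluation sequence of $L$, identifying $E$ with $E_L$; conversely, every such $E_L$ lies in $B(2,5,3)$ by Lemma \ref{l2.4}(1) together with $h^0(E_L)\ge h^0(L)=3$. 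The main obstacle in the argument is verifying that $E/E_L$ remains semistable in the inductive step of (i); once that point is settled, everything else is routine.
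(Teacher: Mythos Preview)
Your argument for (i) is correct and matches the paper's approach: induction on $n$ via Lemma~\ref{l2.5}, using that $E/E_L$ is again semistable of slope $5/2$. (The paper handles the base case $n=2$ directly via Proposition~\ref{p1.5}, giving $h^0(E)\le\tfrac{7}{2}$, which is slightly cleaner than going through $E=E_L$ and Lemma~\ref{l2.4}(1), but your version is fine.)

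There is a genuine gap in (ii). You argue: ``if $p$ were a base point then $h^0(E(-p))\ge3$, contradicting Proposition~\ref{p1.5}''. This only shows that $E$ has no base points, i.e.\ no point at which \emph{all} sections vanish. But for a rank-$2$ bundle, base-point-freeness is strictly weaker than being generated: at some $p$ the evaluation $H^0(E)\to E_p$ could have $1$-dimensional image, in which case $h^0(E(-p))=2$, and Proposition~\ref{p1.5} gives only $h^0(E(-p))\le\tfrac{5}{2}$, no contradiction. Your subsequent kernel computation needs the evaluation map to be surjective onto $E$, so this case must be excluded.

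The paper closes this gap as follows. If $E$ is not generated at $p$, the image of the evaluation map lies in a subsheaf $E'\subset E$ with $E/E'\simeq\CC_p$, so $h^0(E')=h^0(E)=3$. Any line subbundle of $E'$ is a line subbundle of $E$, hence has degree $\le2$ by stability of $E$; thus $E'$ is semistable of rank $2$ and degree $4$. But then Proposition~\ref{p1.2}(i) forces $h^0(E')\le\tfrac{8}{3}<3$, a contradiction. Once generation is established, the rest of your argument for (ii) (dualising, identifying $L=K_C(-p)$, recognising the evaluation sequence of $L$) is correct and agrees with the paper.
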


\begin{proof}
(i): For $n = 2, h^0(E) \leq 3$ by Proposition \ref{p1.5}. For $n > 2$, suppose $h^0(E) > \frac{3n}{2}$. By Lemma \ref{l2.5}, $E_L$ can be embedded
as a subbundle of $E$. So $E$ is strictly semistable and $E/E_L$ is semistable of slope $\frac{5}{2}$. The result follows by induction.

(ii): Suppose $E \in B(2,5,3)$. If $E$ is not generated, then $E$ 
possesses a subsheaf $E'$ with $E/E' \ \simeq \CC_p$ for some point $p \in C$ and $h^0(E') = 3$. 
Now $E'$ is semistable of slope 2. This contradicts Proposition \ref{p1.2}. So we have an exact sequence
$$
0 \ra L^* \ra H^0(E) \otimes \cO_C \ra E \ra 0.
$$
Dualizing this sequence we obtain the result. 
\end{proof}

Recall that $C$ is trigonal and has either one or two trigonal bundles. Call these $T$ and $T'$, where
possibly $T$ is isomorphic to $T'$, and recall that $T \otimes T' \simeq K_C$.

\begin{lem} \label{l2.6}
Let $E$ be a stable bundle on $C$ of rank $n \geq 2$ with $\mu(E) = 3$. Then 
$$
h^0(E) \leq \frac{3n}{2}.
$$
\end{lem}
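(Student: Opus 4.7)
The plan is to exploit the trigonal structure of $C$ directly, using the dual-span sequence of a trigonal line bundle, rather than trying to embed a slope-3 subbundle into $E$ (which is impossible by stability). Let $T$ be a trigonal line bundle on $C$ and set $T':=K_C\otimes T^{-1}$; by Riemann--Roch $T'$ is again a trigonal line bundle (with $T'=T$ in the case where $C$ carries a unique trigonal). Since $C$ is non-hyperelliptic, $T$ is base-point-free, so the evaluation sequence
$$
0\to T^*\to H^0(T)\otimes\cO_C\to T\to 0
$$
is exact.

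First I would tensor this sequence with $E$ to obtain the short exact sequence
$$
0\to E\otimes T^*\to E^{\oplus 2}\to E\otimes T\to 0,
$$
and analyze its long exact cohomology sequence. The bundle $E\otimes T^*$ is stable of slope $0$ and rank $n\ge 2$, hence $h^0(E\otimes T^*)=0$: any non-zero section would produce a line subsheaf whose saturation has non-negative slope, contradicting stability. The same vanishing argument, applied to $T'\otimes E^*$ (again stable of slope $0$ and rank $n\ge 2$), combined with Serre duality, yields
$$
h^1(E\otimes T)=h^0(K_C\otimes E^*\otimes T^{-1})=h^0(T'\otimes E^*)=0.
$$
Then Riemann--Roch for $E\otimes T$, which has rank $n$, degree $6n$ and slope $2g-2$, gives $h^0(E\otimes T)=3n$.

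Finally, the two vanishings combine through the long exact cohomology sequence into an injection $H^0(E)^{\oplus 2}\hra H^0(E\otimes T)$, so that $2\,h^0(E)\le 3n$, as required. The only step that requires care is recognizing that $K_C\otimes T^{-1}$ is always another trigonal bundle, regardless of whether $C$ has one or two $g^1_3$'s, so that the slope-zero vanishing genuinely applies to both $E\otimes T^*$ and $T'\otimes E^*$; once this identification is in place, the argument is purely mechanical and, pleasantly, requires no induction on $n$.
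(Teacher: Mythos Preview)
Your proof is correct and uses essentially the same approach as the paper: tensor the evaluation sequence of $T$ with $E$, use stability of $E\otimes T^*$ and $T'\otimes E^*$ (both of slope $0$ and rank $\ge2$) to kill $h^0$, and then compare $2h^0(E)$ with $h^0(E\otimes T)=3n$. The paper organizes the same ingredients as a proof by contradiction (assuming $h^0(E)>\frac{3n}{2}$ forces $h^0(T'\otimes E^*)>0$, i.e.\ a non-zero map $E\to T'$, contradicting stability), but the content is identical.
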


\begin{proof}
Suppose $h^0(E) > \frac{3n}{2}$. Consider the exact sequence
\begin{equation} \label{eq2.2}
0 \ra T^* \otimes E \ra H^0(T) \otimes E \ra T \otimes E \ra 0.
\end{equation}
Then $T^* \otimes E$ is stable of degree 0. So $h^0(T^* \otimes E) = 0$ and 
$$
h^0(T \otimes E) \geq 2h^0(E) > 3n.
$$
By Riemann-Roch and Serre duality, $h^0(T' \otimes E^*) > 0$. So there exists a non-zero homomorphism $E \ra T'$, which contradicts the stability of $E$.
\end{proof}

\begin{rem} \label{rem4.2}
{\rm There exist semistable bundles of rank $n \geq 2$ and degree $3n$ with $h^0 = 2n$, namely direct sums of copies of $T$ and $T'$. When $T \not \simeq T'$, 
these are the only bundles computing $\Cliff_n(C)$ (\cite[Theorem 3.2]{ln})}.
\end{rem}

\begin{prop} \label{p4.3}
Let $E$ be a semistable bundle on $C$ of rank $n$ and degree $d$ with  $\mu(E) \geq \frac{5}{2}$.
Then 
$$
h^0(E) \leq d-n.
$$
\end{prop}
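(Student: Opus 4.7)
I would proceed by induction on the rank $n$. The base cases $n=1,2$ follow immediately from Lemma \ref{l4.1}(i) (covering $\mu(E)=5/2$, which forces $n$ even) and Lemma \ref{l3.1} (covering $\mu(E)\ge 3$), since the open range $5/2<\mu(E)<3$ contains no admissible slope when $n\le 2$. In the inductive step, the same two lemmas dispose of $\mu(E)=5/2$ and $\mu(E)\ge 3$, so the substantive case is $5/2<\mu(E)<3$, which I handle by contradiction: suppose $h^0(E)>d-n$.

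An elementary calculation shows that for $g=4$ the inequality $d-n\ge n+\tfrac1{g-1}(d-n)$ is equivalent to $\mu(E)\ge 5/2$, so the hypothesis $h^0(E)>n+\tfrac1{g-1}(d-n)$ of Lemma \ref{l2.5} is satisfied, and $E_L$ embeds as a subbundle of $E$. The critical observation is that $h^0(E_L)=3$ — the upper bound follows from Lemma \ref{l2.4}(1), and the lower bound $h^0(E_L)\ge h^0(L)=g-1=3$ is standard from the dual span construction — while simultaneously $d_{E_L}-n_{E_L}=5-2=3$. Thus $E_L$ lies exactly on the boundary of the inequality being proved. Setting $Q:=E/E_L$ with $(n_Q,d_Q)=(n-2,d-5)$, it suffices to establish $h^0(Q)\le d_Q-n_Q=d-n-3$, for then $h^0(E)\le h^0(E_L)+h^0(Q)\le d-n$, a contradiction.

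The main obstacle is that $Q$ need not be semistable, so the inductive hypothesis cannot be applied to $Q$ directly. I circumvent this by passing to the Harder-Narasimhan filtration $0=Q_0\subsetneq Q_1\subsetneq\cdots\subsetneq Q_m=Q$, with semistable quotients $\gr_i Q:=Q_i/Q_{i-1}$ of strictly decreasing slopes $\mu_1>\cdots>\mu_m$. Pulling back under the projection $\pi\colon E\to Q$ yields saturated subbundles $E_L=E_0\subset E_1\subset\cdots\subset E_m=E$ with $E_i/E_{i-1}\cong\gr_i Q$. Semistability of $E$ forces $\mu(E_{m-1})\le\mu(E)$, whence the final quotient satisfies $\mu_m=\mu(E/E_{m-1})\ge\mu(E)\ge 5/2$, and by monotonicity of the HN slopes every $\mu_i\ge 5/2$. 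Since each $\gr_i Q$ is semistable of rank $<n$ and slope $\ge 5/2$, the inductive hypothesis yields $h^0(\gr_i Q)\le d_{\gr_i Q}-n_{\gr_i Q}$; summing over $i$ through the filtration's short exact sequences gives $h^0(Q)\le d_Q-n_Q$, as required.
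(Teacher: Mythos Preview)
Your proof is correct, but it follows a different route from the paper's.

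Both arguments are by induction on $n$ and both invoke Lemma \ref{l2.5} to embed $E_L$ in $E$. The paper, however, does not work with the quotient $E/E_L$ directly. Instead it chooses a proper stable quotient $G$ of $E$ of \emph{minimal} slope, writes $0\to F\to E\to G\to 0$, and proves that $F$ is semistable; the embedding of $E_L$ is then used to bound $\mu(E/E_L)\ge\mu(G)$, which forces either $\mu(F)\ge\tfrac52$ (so induction applies to $F$) or $n_F=1$, $d_F=2$ (which requires a separate argument exhibiting a degree-$2$ line subbundle of $E_L$ via the map $E_L\to D(T)=T$). Your approach sidesteps this case analysis entirely: you pass straight to $Q=E/E_L$ and handle its possible non-semistability with the Harder--Narasimhan filtration, using only the elementary observation that the last HN quotient of $Q$ is a quotient of $E$ itself and hence has slope $\ge\mu(E)>\tfrac52$.

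Your argument is more economical for Proposition \ref{p4.3} on its own. The paper's decomposition via the minimal-slope quotient $G$, on the other hand, is reused verbatim in the proof of Corollary \ref{c4.4} (the strict inequality for $\tfrac52<\mu<3$) and again in Lemma \ref{l2.8}, where the fine information about $F$ and $G$ is essential; your HN-filtration method would not transfer as readily to those refinements.
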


\begin{proof}
The proof is by induction on $n$. For $n=1$ the result is trivial in view of Lemma \ref{l3.1}. For $n=2$, by Lemma \ref{l3.1} we can assume that  $d=5$. By 
Proposition \ref{p1.5}, $h^0(E) \leq \frac{7}{2}$. Since $n=2$, this implies that $h^0(E) \leq d-n$.

Now suppose $n \geq 3$ and the proposition is proved for rank $\leq n-1$.
Suppose 
$$
h^0(E) > d-n.
$$
By Lemma \ref{l3.1}, this implies that $\mu(E) < 3$.
Let $G$ be a proper quotient bundle of $E$ of minimal slope and consider the exact sequence
$$
0 \ra F \ra E \ra G \ra 0.
$$
We can suppose that $G$ is stable.
Moreover, it is immediate that $G$ satisfies the inductive hypothesis.

We claim that $F$ is semistable. If not, let $F'$ be a proper subbundle of $F$ with $\mu(F')>\mu(F)$. By semistability of $E$, we have $\mu(F')\le\mu(E/F')$. By minimality of $\mu(G)$, we have also $\mu(E/F')\ge\mu(G)$. Hence
\begin{eqnarray*}
d=n_F\mu(F)+(n-n_F)\mu(G)&<&n_F\mu(F')+(n-n_F)\mu(E/F')\\
&\le&n_{F'}\mu(F')+(n-n_{F'})\mu(E/F')=d,
\end{eqnarray*}
a contradiction.

From Lemma \ref{l2.5}, we see that $E_L$ can be embedded in $E$.
Then
$$
\mu(E/E_L) = \frac{d-5}{n-2} \geq \mu(G) = \frac{d-d_F}{n-n_F}.
$$
This is equivalent to
\begin{equation} \label{eq2.3}
(n-2)d_F \geq (d-5)n_F +5n -2d. 
\end{equation}
Now suppose $d_F < \frac{5}{2}n_F$. Substituting in \eqref{eq2.3} and simplifying, we obtain $n_F < 2$. So 
$$
\mbox{either} \quad \mu(F) \geq \frac{5}{2} \quad \mbox{or} \quad n_F = 1.
$$
In the first case $F$ satisfies the inductive hypothesis. In the second case we claim that $d_F = 2$.
Certainly $d_F \leq 2$. It is therefore sufficient to prove that $E$ has a line subbundle of degree 2. 
For this note that $L \simeq K_C(-p)$ for some point $p \in C$.
So $L \simeq T \otimes T'(-p)$. Hence there exists a non-zero homomorphism $T \ra L$ and therefore a non-zero homomorphism
$$
D(L) = E_L \ra D(T) = T.
$$
The kernel of this homomorphism is a line bundle of degree 2 and this embeds into $E$, since $E_L \subset E$.
Hence $d_F = 2$ and
$$
h^0(F) \leq 1 = d_F -n_F.
$$
The result now follows by induction.
\end{proof}

\begin{cor} \label{c4.4}
Suppose that $E$ is semistable of rank $n$ and degree $d$ with  $\frac{5}{2} < \mu(E) < 3$. Then  
$$
h^0(E) < d-n.
$$  
\end{cor}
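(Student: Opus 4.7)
I argue by induction on $n$, assuming for contradiction that $h^0(E)=d-n$. A Jordan--H\"older reduction handles the strictly semistable case: the stable factors $E_i$ all have slope $\mu(E)\in(5/2,3)$, and the inductive hypothesis applied to each gives $h^0(E_i)<d_{E_i}-n_{E_i}$, which sums to the strict inequality $h^0(E)<d-n$. So assume $E$ is stable.

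Since $d>5n/2$ yields $h^0(E)=d-n>n+(d-n)/(g-1)$ and $\mu(E)<3=2+1/(g-3)$, Lemma~\ref{l2.5} applies and provides $E_L\hookrightarrow E$ for every $L=K_C(-p)$, $p\in C$. The dualised defining sequence $0\to L^*\to H^0(L)^*\otimes\cO_C\to E_L\to 0$ together with the upper bound of Lemma~\ref{l2.4}(1) give $h^0(E_L)=3$. Writing $F:=E/E_L$ then yields $h^0(F)\ge d-n-3=d_F-n_F$. Every quotient of $F$ is a quotient of $E$ and hence of slope $\ge\mu(E)>5/2$, so every Harder--Narasimhan factor of $F$ is semistable of slope $>5/2$; applying Proposition~\ref{p4.3} to each factor then forces the equality $h^0(F)=d_F-n_F$, and in particular each HN-factor saturates the bound.

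I now eliminate all HN-slopes except exactly $3$: slopes in $(5/2,3)$ are ruled out by the inductive hypothesis of the corollary; slopes $>3$ by the estimates in the proof of Lemma~\ref{l3.1}, which become strict for $\mu>3$; and a stable slope-$3$ factor of rank $\ge 2$ by Lemma~\ref{l2.6}. Saturation therefore forces $F$ to be semistable of slope exactly $3$, whence $d=3n-1$; and by Remark~\ref{rem4.2} (the semistable analogue of Lemma~\ref{l2.6}), $F\cong T^a\oplus T'^{n-2-a}$ for some $a$.

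Finally, I dispose of this boundary case by varying $p\in C$. The subbundles $E_{L_p}\subset E$ are pairwise non-isomorphic because $\det E_{L_p}=K_C(-p)$ recovers $p$; but the quotients $F_p=E/E_{L_p}$ vary in the finite set $\{T^a\oplus T'^{n-2-a}:0\le a\le n-2\}$ and so $F_p\cong F$ is constant on $C$. Thus $p\mapsto E_{L_p}$ injects $C$ into the variety of subbundles of $E$ with fixed quotient $F$. Composing each surjection $E\twoheadrightarrow F$ with projection onto a trigonal summand yields a $C$-family of non-zero maps $E\to T$ (or $E\to T'$); Proposition~\ref{p1.1} applied to $E^*\otimes T$, of slope $1/n$, forces $h^0(E^*\otimes T)\le(3n+1)/4$, constraining this family to lie in a projective space too small to admit an injection of a genus-$4$ curve. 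The main obstacle is this boundary analysis for $d=3n-1$, where the induction does not close and one must combine the trigonal rigidity of $F$ with the freedom to vary $p$.
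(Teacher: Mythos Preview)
Your argument is sound up to and including the identification of the boundary case $d=3n-1$: the Jordan--H\"older reduction to stable $E$, the embedding of $E_L$, the HN-analysis of $F=E/E_L$, and the conclusion that $F$ is semistable of slope $3$ with $h^0(F)=2n_F$ and all JH-factors isomorphic to $T$ or $T'$ are all correct. (One caveat: Remark~\ref{rem4.2} only asserts that $F$ is a \emph{direct sum} $T^a\oplus T'^{\,n-2-a}$ when $T\not\simeq T'$; for $T\simeq T'$ you get only that $\gr F\cong T^{n-2}$.)

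The final paragraph, however, does not close. First, you have not shown that $p\mapsto F_p$ varies continuously---the embedding $E_{L_p}\hookrightarrow E$ furnished by Lemma~\ref{l2.5} is not canonical---so the passage from ``$F_p$ lies in a finite set'' to ``$F_p$ is constant'' is unjustified. Second, even granting $F_p\cong F$, the dimension count fails: the maps $E\to T$ arising from distinct $p$ need not be distinct in $\mathbb P(\Hom(E,T))$, since two non-isomorphic rank-$2$ subbundles $E_{L_p}$, $E_{L_q}$ can sit inside the same rank-$(n-1)$ kernel; and in any case a genus-$4$ curve embeds canonically in $\mathbb P^3$, so for $n\ge 5$ the bound $\dim\mathbb P(\Hom(E,T))\le(3n-3)/4$ gives no obstruction whatsoever.

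Your idea of varying $p$ \emph{does} yield a contradiction, but by a much shorter route that bypasses both difficulties. Independently of any choice of embedding,
\[
\det F_p=\det E\otimes(\det E_{L_p})^{-1}=\det E\otimes K_C^{-1}(p),
\]
which takes infinitely many values in $\Pic(C)$ as $p$ ranges over $C$, since the Abel--Jacobi map is injective. On the other hand, since every JH-factor of $F_p$ is $T$ or $T'$, one has $\det F_p\in\{T^{\otimes a}\otimes(T')^{\otimes(n-2-a)}:0\le a\le n-2\}$, a finite set. This is already the contradiction, and it works uniformly whether or not $T\simeq T'$.

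For comparison, the paper proceeds quite differently: it follows the set-up of Proposition~\ref{p4.3}, taking $G$ a stable quotient of $E$ of \emph{minimal} slope and $F$ its kernel. Saturation forces $G\cong T$ or $T'$ and, via inequality~\eqref{eq2.3}, $n_F=2$, $d_F=5$, $h^0(F)=3$; hence $(n,d)=(3,8)$. A short multiplication-map computation then shows not all sections of $G$ can lift to $E$. Thus the paper collapses the problem to a single rank-$3$ case, whereas your route leaves the entire family $d=3n-1$ to be handled at the end.
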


\begin{proof}
The proof is by induction on $n$, the cases $n=1$ and $n=2$ being trivial. So assume $n \geq 3$ and 
$h^0(E) = d-n$. 

Following through the proof of the proposition, we have $h^0(F) \leq d_F - n_F$, using Proposition \ref{p4.3}, 
and $h^0(G) \leq d_G - n_G$, using the inductive hypothesis and Lemma \ref{l3.1}. So 
$$
h^0(F) = d_F - n_F \quad \mbox{and} \quad h^0(G) = d_G - n_G.
$$
Then, by Proposition \ref{p1.5} and Remark \ref{rem4.2} together with the inductive hypothesis, 
we see that $\mu(G)=3$ and $G = T$ or $T'$. 

If $n_F = 1$, then $d_F = 2$ and 
$\mu(E) = \frac{5}{2}$, a contradiction. So $\mu(F) \geq \frac{5}{2}$ and the only possibility for 
$h^0(F) = d_F - n_F$ is when $\mu(F) = \frac{5}{2}$. Now \eqref{eq2.3} implies that $n_F = 2$. So $d_F = 5$ and $h^0(F) = 3$.
Moreover, all sections 
of $G$ lift to $E$. This means that the map
\begin{equation} \label{eq4.3}
H^0(G) \otimes H^0(K_C \otimes F^*) \ra H^0(G \otimes K_C \otimes F^*)
\end{equation}
is not surjective. The kernel of this map is $H^0(G^* \otimes K_C \otimes F^*)$. 
The semistable bundle $G^* \otimes K_C \otimes F^*$ has rank 2 and slope $\frac{1}{2}$. So by Proposition \ref{p1.1}, $h^0(G^* \otimes K_C \otimes F^*) \leq 1$.
A dimensional calculation shows that \eqref{eq4.3} is surjective,  a contradiction.
\end{proof}

\begin{lem} \label{l2.8}
Let $C$ be a non-hyperelliptic curve of genus $4$ with $2$ distinct trigonal bundles $T$ and $T'$. 
Let $E$ be a semistable bundle of rank $n$ and degree $d$ with 
$$
\frac{5}{2} \leq \mu(E) < 3.
$$
Then 
$$
h^0(E) \leq \frac{d}{2} + \frac{n}{4}.
$$
\end{lem}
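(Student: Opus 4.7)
The plan is to proceed by induction on $n$. For $n=2$, the hypothesis forces $d=5$, and Proposition \ref{p1.5} gives $h^0(E)\le 7/2$, hence $h^0(E)\le 3=d/2+n/4$. Note also that the case $\mu(E)=5/2$ (which forces $n$ even) is covered for arbitrary rank by Lemma \ref{l4.1}(i), giving $h^0(E)\le 3n/2=d/2+n/4$; thus it will suffice to treat $\mu(E)>5/2$ in the inductive step.

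For the inductive step with $n\ge 3$ and $\mu(E)>5/2$, the approach is to argue by contradiction. Suppose $h^0(E)>d/2+n/4$. On our slope range ($d>5n/2$) a short calculation shows $d/2+n/4>n+(d-n)/3$, so Lemma \ref{l2.5} applies and provides an embedding $E_L\hookrightarrow E$; setting $F:=E/E_L$ of rank $n-2$ and degree $d-5$, the exact sequence $0\to E_L\to E\to F\to 0$ together with $h^0(E_L)\le 3$ (Lemma \ref{l4.1}(ii) or Proposition \ref{p1.5}) forces $h^0(F)>d_F/2+n_F/4$. By semistability of $E$, $\mu(F)\ge \mu(E)>5/2$; and for any subsheaf $F_1\subseteq F$, the preimage $\widetilde F_1\subseteq E$ satisfies $\mu(\widetilde F_1)=(5+d_{F_1})/(2+n_{F_1})\le \mu(E)<3$, which gives $\mu(F_1)\le 3$. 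Thus every Harder--Narasimhan factor of $F$ has slope in $(5/2,3]$, and since HN slopes are strictly decreasing, at most one HN factor---namely the top piece $\bar F_1$---has slope exactly $3$.

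I then plan to bound $h^0(F)$ as a sum over HN factors. For each HN factor $\bar F_i$ of slope in $(5/2,3)$, the inductive hypothesis (smaller rank) gives $h^0(\bar F_i)\le d_{\bar F_i}/2+n_{\bar F_i}/4$. For the top HN factor $\bar F_1$ of slope exactly $3$, if it is stable then Lemma \ref{l2.6} provides the strict bound $h^0(\bar F_1)\le 3n_{\bar F_1}/2<7n_{\bar F_1}/4=d_{\bar F_1}/2+n_{\bar F_1}/4$. Summing in either case yields $h^0(F)\le d_F/2+n_F/4$, contradicting the assumption and closing the induction in these sub-cases.

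The principal obstacle is the remaining case: $\bar F_1$ is strictly semistable of slope $3$ and admits a Jordan--H\"older subfactor isomorphic to $T$ or $T'$; then $h^0(\bar F_1)$ can reach $2n_{\bar F_1}$ (attained by direct sums of $T$ and $T'$), exceeding the required $7n_{\bar F_1}/4$. To handle this, my plan is to exploit the hypothesis $T\not\simeq T'$ essentially: a rank-$1$ Jordan--H\"older subbundle $T\subseteq \bar F_1$ lifts to a surjection $E\twoheadrightarrow T$, and the kernel $E''$ has rank $n-1$, degree $d-3$, and slope $(d-3)/(n-1)\in[5/2,3)$ (the lower bound requiring $\mu(E)>5/2$). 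Combining $h^0(E)\le h^0(E'')+2$ with the inductive hypothesis applied to $E''$---directly if semistable, otherwise via its HN filtration, whose top slope is bounded by $\mu(E)<3$---and iterating over the rank-$1$ JH subfactors of $\bar F_1$, one aims to reduce to a configuration in which $\bar F_1$ has no rank-$1$ JH factor, the remaining JH factors being stable of rank $\ge 2$ and controlled by Lemma \ref{l2.6}. The technical heart of the argument is verifying that the successive kernels $E''$ remain in the allowed slope range and that their HN factors all stay above slope $5/2$, so that the inductive bound applies; the distinctness $T\not\simeq T'$ enters crucially in controlling the structure of the slope-$3$ semistable pieces during this peeling.
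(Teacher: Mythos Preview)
Your approach diverges from the paper's, and the handling of the ``principal obstacle'' contains a genuine error. The paper does not work with $E/E_L$; it follows the framework of Proposition~\ref{p4.3}, taking $G$ a stable quotient of $E$ of \emph{minimal} slope with kernel $F$ (which is then semistable). The only new content beyond Proposition~\ref{p4.3} is to show $\mu(G)<3$: from $h^0(E)>d/2+n/4$ one deduces $h^0(T\otimes E^*)>n/2$ and $h^0(T'\otimes E^*)>n/2$, giving a map $E\to T^{\oplus\lceil(n+1)/2\rceil}\oplus T'^{\oplus\lceil(n+1)/2\rceil}$ not factoring through any proper direct factor; a dimension count using $T\not\simeq T'$ shows this map cannot be injective, so $E$ has a proper quotient of slope $<3$. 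The induction then proceeds exactly as in Proposition~\ref{p4.3}.

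Your route via $E/E_L$ breaks at the slope-$3$ top HN factor. The claim that a Jordan--H\"older subbundle $T\subseteq\bar F_1$ ``lifts to a surjection $E\twoheadrightarrow T$'' is backwards: $\bar F_1$ is a \emph{subbundle} of $F=E/E_L$, so $T\subset\bar F_1$ pulls back to a rank-$3$ subbundle of $E$ containing $E_L$, not to a quotient of $E$. (Surjections $E\to T$ do in fact exist---that is precisely what the paper's argument produces---but not for the reason you give.) Note also that Lemma~\ref{l2.6} requires rank $\ge2$, so already the stable rank-$1$ case $\bar F_1\cong T$ (with $h^0=2>7/4$) defeats your bound before you even reach the strictly semistable case. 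Finally, even granting a surjection $E\to T$, your plan to apply the inductive hypothesis to the HN factors of the kernel $E''$ is unjustified: subsheaves of $E''\subset E$ have slope $<3$, but quotients of $E''$ are not quotients of $E$, so the bottom HN slope of $E''$ need not be $\ge 5/2$.
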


\begin{proof}
We proceed exactly as in the proof of Proposition \ref{p4.3} with the improved inequality. We have only to show that, 
if $h^0(E) > \frac{d}{2} + \frac{n}{4}$ and $n \geq 3$, then $\mu(G) < 3$.

Consider the exact sequence \eqref{eq2.2}. Since $T^* \otimes E$ is semistable of negative degree, we have $h^0(T^* \otimes E) = 0$ 
and hence 
$$
h^0(T \otimes E) > d + \frac{n}{2}.
$$
By Riemann-Roch and Serre duality, we obtain
$$
h^0(T' \otimes E^*) > \frac{n}{2}
$$
and similarly
$$
h^0(T \otimes E^*) > \frac{n}{2}.
$$
Hence we have a homomorphism 
$$
E \ra T^{\oplus \lceil \frac{n+1}{2} \rceil} \oplus T'^{\oplus \lceil \frac{n+1}{2} \rceil}
$$
such that $E$ does not map into any proper direct factor. If this map is injective as a morphism of sheaves,
then we have a generically surjective homomorphism
$$
\cO_C^{\oplus \lceil \frac{n+1}{2} \rceil} \oplus {(T \otimes T'^*)}^{\oplus \lceil \frac{n+1}{2} \rceil} \ra E^* \otimes T.
$$
From this we deduce an exact sequence
\begin{equation} \label{eq2.4}
0 \ra \cO_C^{\oplus r} \oplus ( T \otimes T'^*)^{\oplus n-r} \ra E^* \otimes T \ra \tau \ra 0 
\end{equation}
for some $r$ where $\tau$ is a torsion sheaf of length $= \deg(E^* \otimes T) \leq \frac{n}{2}$.

If we have equality, then $\mu(E) = \frac{5}{2}$ and $h^0(E) \leq \frac{3n}{2} = \frac{d}{2} + \frac{n}{4}$ by Lemma \ref{l4.1}.
We can therefore assume that $\tau$ is of length $< \frac{n}{2}$. The extensions \eqref{eq2.4} are classified by
$$
\bigoplus^r \Ext^1(\tau,\cO_C) \oplus \bigoplus^{n-r} \Ext^1(\tau,T \otimes T'^*).
$$
If $r \geq \frac{n}{2}$, then the components of the first factor must be linearly dependent, which implies that $\cO_C$ is a direct factor of $E^* \otimes T$.
Similarly, if $r< \frac{n}{2}$, then $T \otimes T'^*$ is a direct factor of $E^* \otimes T$. In either case $E$ fails to be semistable.

It follows that the morphism $E \ra T^{\oplus \lceil \frac{n+1}{2} \rceil} \oplus T'^{\oplus \lceil \frac{n+1}{2} \rceil}$ is not injective. Since $E$ does not map into any
proper direct factor, it follows that $E$ possesses a proper quotient bundle of slope $< 3$. This implies $\mu(G) < 3$ and hence $G$ satisfies the inductive hypothesis. 
This completes the proof.
\end{proof}

\begin{rem}
{\rm
A plausible conjecture for the range $2 < \mu \leq \frac{5}{2}$ would be $k \leq \frac{d}{2} + \frac{n}{4}$ (compare Lemma \ref{l2.8}).
However, taking $r=3$ and $s = 0$ in Proposition \ref{p3.3}, we see that $B(9,19,12) \neq \emptyset$. So the conjecture is not valid.
}
\end{rem}

We finish by combining the results of this section with some from previous sections to obtain the following theorem (see also the figure in Section \ref{map}).

\begin{theorem} \label{thm4.8}
Let $C$ be a non-hyperelliptic curve of genus $4$ with trigonal bundles $T, T'$. Suppose that $\widetilde B(n,d,k) \neq \emptyset$ where $n > 0, 0 \leq d \leq 3n$ and $k \geq 1$. 
Then one of the following holds.
\begin{enumerate}
 \item[(i)] $d=0, \; k \leq n$;
 \item[(ii)] $0 < d \leq 2n$ and either $k \leq n + \frac{1}{4}(d-n)$ or $d = 2n$ and $k \leq \frac{4n}{3}$;
 \item[(iii)] $2n < d \leq \frac{5}{2}n, \; k \leq n + \frac{1}{3}(d-n)$;
 \item[(iv)] $\frac{5}{2}n < d < 3n$ and $T \not \simeq T', \; k \leq \frac{d}{2} + \frac{n}{4}$;
 \item[(v)] $\frac{5}{2}n < d < 3n$ and $T\simeq T',\; k < d-n$;
 \item[(vi)] $d = 3n, \; k \leq 2n$.
\end{enumerate}
\end{theorem}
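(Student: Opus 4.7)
The theorem packages an upper bound on $k$ for each subinterval of the slope range $\mu = d/n \in [0,3]$. The plan is therefore a case analysis: in each slope interval, invoke the appropriate result from Sections \ref{back}--\ref{upper}, and then verify that the bounds align at the interval endpoints.

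\textbf{Low and middle slopes (cases (i)--(iii)).} For $d=0$, a polystable representative of $[E]$ is a direct sum of degree-zero stable bundles, each contributing at most one section (with equality only for $\mathcal O_C$), so $k \le n$. For $0 < d < 2n$, Proposition \ref{p1.1} at $g=4$ yields $k \le n + \tfrac14(d-n)$, and for $d = 2n$ the semistable clause of Proposition \ref{p1.2}(i) gives $k \le \tfrac{ng}{g-1} = \tfrac{4n}{3}$. For $2n < d < \tfrac{5n}{2}$, Lemma \ref{l2.5} applied with $g=4$ (so $2+\tfrac{1}{g-3}=3$ and $n+\tfrac{1}{g-1}(d-n) = n+\tfrac13(d-n)$) shows that any violation $h^0(E) > n + \tfrac13(d-n)$ would force $\mu(E) \ge \tfrac52$, contrary to hypothesis; at the endpoint $d = \tfrac{5n}{2}$, Lemma \ref{l4.1}(i) provides $k \le \tfrac{3n}{2}$, which equals $n+\tfrac13(d-n)$.

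\textbf{High slopes (cases (iv)--(vi)).} On the open interval $\tfrac{5n}{2} < d < 3n$, Lemma \ref{l2.8} applies directly under the hypothesis $T \not\simeq T'$, giving $k \le \tfrac{d}{2} + \tfrac{n}{4}$ and proving (iv). When $T \simeq T'$ one loses Lemma \ref{l2.8}, but Corollary \ref{c4.4} still applies on $\tfrac52 < \mu(E) < 3$ to give $h^0(E) < d - n$, since its proof uses only that a minimal-slope stable quotient is isomorphic to $T$ or $T'$ (valid regardless of whether the two trigonal bundles coincide), yielding (v). Finally, for $d = 3n$ we have $1 \le \mu(E) = 3 \le 2g-3 = 5$, so Proposition \ref{p1.5} gives $k \le \tfrac{d+n}{2} = 2n$, proving (vi).

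\textbf{Main obstacle.} All the substantive work has already been expended on the ingredients: the sharp bounds in the middle- and high-slope regimes (Lemmas \ref{l2.5} and \ref{l2.8}, Proposition \ref{p4.3} and Corollary \ref{c4.4}) are where non-trivial information about $E_L$-subbundles and minimal-slope quotients is extracted. The theorem is essentially a consolidation of these, and the only care needed is bookkeeping: checking that the bounds on adjacent intervals dovetail at the endpoints $\mu = 2$, $\tfrac52$, $3$ and that no case is left uncovered.
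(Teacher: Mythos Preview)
Your proof is correct and follows essentially the same approach as the paper: both reduce the theorem to a case-by-case invocation of Propositions \ref{p1.1}, \ref{p1.2}, \ref{p1.5}, Lemma \ref{l2.5}, Lemma \ref{l2.8}, and Corollary \ref{c4.4}. You add a small refinement the paper elides, namely invoking Lemma \ref{l4.1}(i) explicitly for the endpoint $\mu = \tfrac52$ in case (iii), which is indeed needed since Lemma \ref{l2.5} alone only yields a contradiction for $\mu < \tfrac52$.
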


\begin{proof}
For (i) and (ii), see Propositions \ref{p1.1} and \ref{p1.2}. (iii) follows from Lemma \ref{l2.5}, (iv) is Lemma \ref{l2.8}, (v) is Corollary \ref{c4.4} and (vi) is immediate
from Proposition \ref{p1.5}.
\end{proof}

\begin{rem}
{\rm
In the following cases listed in Theorem \ref{thm4.8}, we can definitely state that $B(n,d,k) = \emptyset$.
\begin{enumerate}
 \item[(i)] $d=0, \; n \geq 2$;
 \item[(ii)] $d= n = k \geq 2$ and $d = 2n,\; k >\frac{5}{4}n, \; (n,d,k) \neq (3,6,4)$;
 \item[(vi)] $d = 3n,\; k > \frac{3}{2}n, (n,d,k) \neq (1,3,2)$.
\end{enumerate}

}
\end{rem}

\section{Existence results for genus 4}\label{exist}

\begin{prop} \label{p3.7}
Let $C$ be a non-hyperelliptic curve of genus $4$. Suppose $2n < d \leq 3n$.
\begin{enumerate}
 \item[(i)] If $k \leq \frac{n}{2} + \frac{d}{4}$ or $(n,d,k) = (3,9,4)$,  then $B(n,d,k) \neq \emptyset$;
 \item[(ii)] if $k \leq \frac{n}{2} + \frac{d}{4}$ or $d = 3n$ and $k \leq 2n$, then $\widetilde B(n,d,k) \neq \emptyset$;
 \item[(iii)] if $d < 3n, d-2n \equiv \ell' \mod 4$ with $1 \leq \ell' \leq 4$ and $k \leq \frac{d+ \ell'}{2} - 2$, then
 $B(n,d,k) \neq \emptyset.$
 \end{enumerate}
\end{prop}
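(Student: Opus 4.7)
The plan is to prove each part by an explicit construction, using elementary transformations of bundles of slope $2$ supplied by Proposition \ref{p1.2}, augmented in part (iii) by copies of the dual span $E_L = D(K_C(-p))$ from Lemma \ref{l4.1}.

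For (i) and the first half of (ii), I would take $F \in B(n, 2n, n+r)$ (respectively $\widetilde B(n, 2n, n+r)$), which is non-empty for $r \leq n/4$ by Proposition \ref{p1.2}, and form an iterated elementary transformation
\begin{equation*}
0 \to F \to E \to \CC_{p_1} \oplus \cdots \oplus \CC_{p_s} \to 0, \qquad s = d - 2n,
\end{equation*}
at distinct points $p_i$. Then $h^0(E) \geq h^0(F) = n+r$, so the choice $r = k - n$ -- admissible because $k \leq n/2 + d/4$ is equivalent to $r \leq (d - 2n)/4 \leq n/4$ -- yields $h^0(E) \geq k$. Semistability in (ii) is automatic, since any saturated subbundle $G \subset E$ has $\mu(G \cap F) \leq 2$ and hence $\mu(G) \leq 2 + s/n_G \leq d/n$. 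The exceptional case $(3, 9, 4)$ of (i) arises from $F = D(K_C) \in B(3, 6, 4)$ with three elementary transformations, and the remaining case of (ii), $d = 3n$ and $k \leq 2n$, is covered by direct sums of $T$ and $T'$.

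For (iii), the bound $k \leq (d + \ell')/2 - 2 = n + 2q + \ell' - 2$ (where $d - 2n = 4q + \ell'$) can exceed that of (i), and I would mix in copies of $E_L$, which by Lemma \ref{l4.1} is stable of rank $2$, degree $5$, slope $5/2$, with $3$ sections. Consider
\begin{equation*}
H := E_{L_1} \oplus \cdots \oplus E_{L_m} \oplus F
\end{equation*}
with $L_i = K_C(-p_i)$ distinct and $F \in B(n-2m, 2(n-2m), (n-2m) + r_0)$, and perform $s := 4q + \ell' - m$ further elementary transformations at general points. The resulting $E$ has rank $n$, degree $d$, and $h^0(E) \geq 3m + (n-2m) + r_0 = n + m + r_0$. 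Taking $m = 2q + \ell' - 2$ (when non-negative and $\leq n/2$) and $r_0 = 0$ yields $h^0(E) \geq n + 2q + \ell' - 2$ as required; the remaining boundary cases are handled by variant choices (small $m$ compensated by $r_0 > 0$, subject to $r_0 \leq (n-2m)/4$).

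The central difficulty throughout is verifying stability of $E$. This is routine in (ii), but in (i) and (iii) elementary transformations can destroy stability and in (iii) the starting $H$ is only polystable. The approach, modeled on Propositions \ref{p3.3} and \ref{p5.14}, is to analyse any putative destabilizing subbundle $G \subset E$ through $G \cap F$ or $G \cap H$: each such intersection is a saturated subbundle of slope close to $d/n$ inside a stable or polystable object, and extending it to a destabilizing $G$ forces a linear condition on the points $p_i$ and on the extension class. Generic choices simultaneously avoid every such condition, and handling the various combinations of $m$, $r_0$, and $\ell'$ cleanly is the main technical labor.
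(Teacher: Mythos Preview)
Your approach is workable in outline but far harder than necessary, and there is a concrete error in your semistability argument for (ii). From $\mu(G\cap F)\le 2$ you correctly deduce $\mu(G)\le 2+s/n_G$, but $2+s/n_G > 2+s/n = d/n$ whenever $n_G<n$, so the final inequality goes the wrong way. Semistability after positive elementary transformations is \emph{not} automatic; it would itself require a genericity argument. For (i) and (iii) you acknowledge that stability is ``the main technical labor'', and indeed the mixed construction with copies of $E_L$ in (iii) would need a delicate case analysis in the style of Example~\ref{ex4.12} and Proposition~\ref{p5.14}.

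The paper bypasses all of this by never using elementary transformations here. For (i) it simply tensors by an effective line bundle of degree~$1$: if $E'\in B(n,d-n,k)$ (non-empty by Propositions~\ref{p1.1} and~\ref{p1.2} exactly when $k\le \tfrac{n}{2}+\tfrac{d}{4}$, plus the exceptional $D(K_C)\in B(3,6,4)$), then $E'\otimes\cO_C(p)\in B(n,d,k)$, and tensoring preserves stability for free. Part (ii) then follows from (i) and direct sums of $T,T'$. For (iii) the paper takes a stable bundle in $B(n,3n-d,n-\ell-1)$ (via Proposition~\ref{p1.1}), tensors by the trigonal bundle $T$ to obtain a bundle in $B(n,6n-d,2n-2\ell-2)$ (using the inequality $h^0(E\otimes T)\ge 2h^0(E)$, which holds since $T^{-1}\otimes E$ has negative slope; this is \cite[Lemma~3.1]{bmno}), and then applies Serre duality. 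Since both tensoring by a line bundle and passing to $K_C\otimes(-)^*$ preserve stability, no verification is needed anywhere.
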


\begin{proof}
(i) follows from Propositions \ref{p1.1} and \ref{p1.2} by tensoring by an effective line bundle 
of degree 1 (see also \cite[Theorem 4.1]{bmno}). (ii) follows from (i) and the existence 
of $T, T' \in B(1,3,2)$.

(iii): Write $d = 2n + 4\ell + \ell'$ with $1 \leq \ell' \leq 4$. Since $0 < 3n -d < n$, we have $B(n,3n-d,k') \neq \emptyset$ for $k' \leq n + \frac{1}{4}(2n-d)$ according 
to Proposition \ref{p1.1}. In particular 
$$
B(n,3n-d,n-\ell -1) \neq \emptyset.
$$
Tensoring by $T$ and using \cite[Lemma 3.1]{bmno}, it follows that
$$
B(n,6n-d,2n-2\ell -2) \neq \emptyset.
$$
Now Serre duality gives $B(n,d,\frac{d+\ell'}{2}-2) \neq \emptyset$.
\end{proof}

\begin{cor} \label{cor5.2}
Let $C$ be as in the proposition. Suppose that $2n+3 \leq d < 3n$. Then
$$
B(n,d,n+1) \neq \emptyset.
$$
\end{cor}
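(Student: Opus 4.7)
The plan is to deduce the corollary directly from Proposition \ref{p3.7}(iii) by choosing $k = n+1$ and verifying that the hypothesis on $k$ holds for every $d$ in the stated range. Concretely, writing $d - 2n = 4\ell + \ell'$ with $1 \le \ell' \le 4$, the proposition guarantees $B(n,d,k) \neq \emptyset$ whenever $k \le \frac{d+\ell'}{2} - 2$, so the only thing to check is that $n+1 \le \frac{d+\ell'}{2} - 2$, i.e.\ $d + \ell' \ge 2n+6$.

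The next step is a short case analysis on $d - 2n \pmod 4$. When $d = 2n + 4\ell + 1$ with $\ell \ge 1$, one has $\ell'=1$ and $d+\ell' = 2n+4\ell+2 \ge 2n+6$. When $d = 2n+4\ell+2$ with $\ell \ge 1$, one has $\ell'=2$ and $d+\ell' = 2n+4\ell+4 \ge 2n+8$. When $d = 2n+4\ell+3$ with $\ell \ge 0$, one has $\ell'=3$ and $d+\ell' = 2n+4\ell+6 \ge 2n+6$; note this is where the hypothesis $d \ge 2n+3$ is sharp. Finally, when $d = 2n+4\ell$ with $\ell \ge 1$, one has $\ell'=4$ and $d+\ell' = 2n+4\ell+4 \ge 2n+8$. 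In every case the required inequality $d+\ell' \ge 2n+6$ holds, so Proposition \ref{p3.7}(iii) yields $B(n,d,n+1) \neq \emptyset$.

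There is essentially no obstacle here beyond ensuring that the extremal cases $d = 2n+3$ and $d = 2n+5$ are handled, which they are: in both instances the inequality $k \le \frac{d+\ell'}{2}-2$ is satisfied with equality at $k = n+1$. The hypothesis $d < 3n$ is only needed to stay inside the range of Proposition \ref{p3.7}(iii); no further input is required.
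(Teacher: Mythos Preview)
Your proof is correct and follows exactly the approach of the paper, which simply says the result follows from Proposition~\ref{p3.7}(iii); you have merely supplied the routine verification that $n+1 \le \frac{d+\ell'}{2}-2$ in each residue class of $d-2n$ modulo $4$.
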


\begin{proof}
This follows from Proposition \ref{p3.7}(iii). 
\end{proof}

\begin{rem}
{\rm
In Proposition \ref{p3.7} (iii), the inequality on $k$ is equivalent to $k \leq \frac{d}{2}$ if $\ell' = 3$ or 4 and to $k \leq \frac{d}{2} -1$ if $\ell' = 1$ or 2.
From Proposition \ref{p3.3} and Example \ref{ex3.7}, we already have many examples in the region $2n < d < \frac{5}{2}n$ with $B(n,d,k) \neq \emptyset$ and $k > \frac{d}{2}$.
In these cases $l'=1$. Proposition \ref{p5.14} gives similar examples with $\ell'=2$.

In the following example, we construct further bundles $E \in B(n,d,k)$ with $\frac{5}{2}n < d < 3$ and $k > \frac{d}{2}$.
}
\end{rem}

\begin{ex}\label{ex5.2} {\rm 
Let $C$ be a non-hyperelliptic curve of genus 4 with $T \not  \simeq T'$ and let $F$ be a stable bundle of rank $\ell$ and degree $\delta+3\ell$ on $C$ with $\ell, \delta \geq 1$.
Then $T \otimes F^*$ is a stable bundle of negative degree. So $h^0(T \otimes F^*) = 0$ and $h^1(T \otimes F^*) = \delta + 3 \ell$.
Similarly $h^1(T' \otimes F^*) = \delta + 3 \ell$. Hence there exists a unique bundle $E$ fitting into the exact sequence
$$
0 \ra T^{\oplus \delta + 3 \ell} \oplus T'^{\oplus \delta + 3 \ell} \ra E \ra F \ra 0
$$
such that no factor of $T^{\oplus \delta + 3 \ell} \oplus T'^{\oplus \delta + 3 \ell}$ splits off $E$. Note that
$$
\mu(E) = \frac{7\delta + 21\ell}{2\delta + 7 \ell}  \quad \mbox{and} \quad h^0(E) \geq 4\delta + 12 \ell.
$$
If $E$ is not stable, we have a diagram 
\begin{equation} \label{equ5.1}
\xymatrix{
& 0 \ar[d] & 0 \ar[d] & 0 \ar[d] &\\
0  \ar[r] & K \ar[r] \ar[d] & G \ar[r] \ar[d] & G' \ar[r] \ar[d] & 0\\
0 \ar[r] & T^{\oplus \delta+3 \ell} \oplus T'^{\oplus \delta + 3 \ell} \ar[r] \ar[d] &  E \ar[r] \ar[d] & F \ar[r] \ar[d] & 0\\
0 \ar[r] & Q \ar[r] \ar[d] & R \ar[r] \ar[d] & S \ar[r] \ar[d] & 0\\
& 0 & 0 & 0 &
}
\end{equation}
with 
\begin{equation} \label{equ5.2}
\frac{d_{G'} + d_K}{n_{G'} + n_K} \geq \frac{7 \delta + 21 \ell}{2 \delta + 7 \ell}.
\end{equation}

Here $G'$ is a non-zero subsheaf of $F$ of slope $> 3$, $Q$ and $R$ are vector bundles and $S$ is a coherent sheaf.

Since $F$ is stable, $h^0(F^* \otimes T) = 0$, hence $\Hom(S,T) = 0$. From the right hand vertical exact sequence we obtain
$$
\dim \Ext^1(S,T) = -\chi(F^* \otimes T) + \chi(G'^* \otimes T) = \delta + 3 \ell - d_{G'}.
$$
The same holds for $\Ext^1(S,T')$. 

If $Q$ has $T^{\oplus \delta + 3 \ell -r} \oplus T'^{\oplus \delta + 3 \ell -s}$ as a direct factor with $\min \{r,s \} < d_{G'}$, then at least 
one factor $T$ or $T'$ splits off the bottom exact sequence. This gives a non-zero homomorphism $E \ra T$ or $E \ra T'$.
So either $T$ or $T'$ splits off $E$, a contradiction.
Hence, to prove the stability of $E$, it suffices to prove that, given diagram \eqref{equ5.1} with the stated conditions, then 
\begin{equation} \label{equ5.3}
\min\{ r,s\} < d_{G'}.
\end{equation}
}
\end{ex} 

This looks to be difficult to prove in all cases, but we have 

\begin{prop}
If $1 \leq \delta \leq 2$, then $E$ is stable. 
\end{prop}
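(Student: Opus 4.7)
\textit{Proof plan.} The proof is by contradiction. Assume $E$ is not stable; Example \ref{ex5.2} then furnishes a proper destabilising subsheaf $G\subsetneq E$ yielding the diagram (\ref{equ5.1}) and the slope inequality (\ref{equ5.2}). By the argument at the end of Example \ref{ex5.2}, it suffices to show $\min\{r,s\}<d_{G'}$, for then a copy of $T$ or $T'$ splits off $E$, contradicting the defining basis property of $\{\eta_i\}$ in $\Ext^1(F,T)$ and of $\{\eta'_j\}$ in $\Ext^1(F,T')$.

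Since $K$ is a subsheaf of the polystable bundle $V:=T^{\oplus\delta+3\ell}\oplus T'^{\oplus\delta+3\ell}$ of slope $3$, we have $d_K\le 3n_K$; combining this with (\ref{equ5.2}) yields
\[
\delta\,n_K\;\le\;(2\delta+7\ell)\,d_{G'}-7(\delta+3\ell)\,n_{G'},
\]
and in particular $\mu(G')\ge\mu(E)>3$. Using the stability of $F$ (so $\mu(G')\le\mu(F)=3+\delta/\ell$), the integrality of $d_{G'}$ and $n_{G'}$, and $n_{G'}\le\ell$, one checks that for $\delta\in\{1,2\}$ the only possibilities for $G'$ are: $G'=F$; or $\delta=2$ with $(n_{G'},d_{G'})=(\ell,3\ell+1)$ (so $S\simeq\CC_p$); or $\delta=2$ with $\ell/2<n_{G'}<\ell$ and $d_{G'}=3n_{G'}+1$.

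Now suppose $\min\{r,s\}\ge d_{G'}$ and derive a contradiction. First consider $G'=F$: then $d_{G'}=\delta+3\ell$, so $r=s=\delta+3\ell$, i.e.\ $Q=V/K$ has no $T$- or $T'$-direct factor. The compatibility condition for a sub-extension $0\to K\to G\to F\to 0$ inside $0\to V\to E\to F\to 0$ (namely that $\eta$ lies in the image of $\Ext^1(F,K)\to\Ext^1(F,V)$) then cannot be satisfied unless $K=V$, by the basis property of $\{\eta_i\}$ and $\{\eta'_j\}$; hence $G=E$, contradicting that $G$ is proper. For the remaining $\delta=2$ sub-cases ($G'\subsetneq F$), if $K$ is polystable then $n_K=r+s\ge 2d_{G'}$, which combined with the above numerical bound gives $\mu(G')\ge\mu(F)$, contradicting strict stability of $F$. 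The non-polystable situation is handled by writing $Q=T^{\oplus\delta+3\ell-r}\oplus T'^{\oplus\delta+3\ell-s}\oplus Q''$ with $\rk Q''=r+s-n_K>0$; using the bound on $n_K$ (which becomes $n_K\le(5\ell+4)/2$ when $(n_{G'},d_{G'})=(\ell,3\ell+1)$, with an analogous bound in the remaining sub-case), together with the constraint that $\eta$ restrict compatibly to $G'$, produces sharper numerical inequalities that are incompatible with the hypothesis for $\delta\le 2$.

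The main obstacle is this last step, treating non-polystable $K$ in the regime $\delta=2$, $G'\subsetneq F$: one must analyse carefully the bottom row $0\to Q\to R\to S\to 0$ of the diagram (\ref{equ5.1}), in particular controlling the piece $Q''$ of $Q$ consisting of indecomposable summands distinct from $T$ and $T'$. The hypothesis $\delta\le 2$ is essential both in narrowing the list of possibilities for $G'$ in the integrality step and in making the quantitative bounds in the final step tight enough to force contradictions; for $\delta\ge 3$ the analysis becomes considerably more delicate and the statement as given may fail.
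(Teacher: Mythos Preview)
Your case analysis gets the geography slightly wrong, and this creates a genuine gap. For $\delta=2$ the destabilising data $(G',K)$ split into exactly three possibilities: $G'=F$ with $d_K=3n_K$; $G'=F$ with $d_K=3n_K-1$; and $G'\subsetneq F$ with $d_{G'}=3n_{G'}+1$ and $d_K=3n_K$. You treat ``$G'=F$'' as a single case and dispose of it with the basis-property argument, but that argument only works when $K$ is polystable, i.e.\ when $d_K=3n_K$. In that situation $K=T^{\oplus a}\oplus T'^{\oplus b}$ embeds in $V$ linearly, and the linear independence of the $\eta_i$ and $\eta'_j$ indeed forces $a=b=m$, hence $K=V$. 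But when $d_K=3n_K-1$ the subbundle $K$ is \emph{not} a direct sum of copies of $T$ and $T'$, and the implication ``$\eta$ lies in $\mathrm{Im}(\Ext^1(F,K)\to\Ext^1(F,V))\Rightarrow K=V$'' simply fails: one can have $K\subsetneq V$ with $Q=V/K$ admitting no $T$- or $T'$-summand (for instance $Q$ can be a line bundle of degree $4$), and the basis property of $\{\eta_i\}$ places no direct constraint on whether $\eta$ dies in $\Ext^1(F,Q)$.

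The paper handles precisely this sub-case by a different mechanism: from \eqref{equ5.2} one gets the numerical bound $n_K<2+3\ell=m$, and then Proposition~\ref{p1.1} gives $h^0(K^*\otimes T)\le n_K<m$ (treating the semistable and non-semistable cases for $K$ separately). Since the $m$ coordinate projections $V\to T$ restrict to $m$ elements of $\Hom(K,T)$, they must be linearly dependent; hence some nonzero projection $V\to T$ kills $K$, so factors through $Q=R$, producing a nonzero map $E\to T$ and the required contradiction. Your proposal never invokes this $h^0$-bound, and the ``non-polystable $K$'' discussion you defer to the $G'\subsetneq F$ case is in fact vacuous there (since $d_K=3n_K$ is forced) and needed instead in the $G'=F$ case. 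The final paragraph of your sketch, which gestures at ``sharper numerical inequalities'' for the non-polystable piece, is therefore aimed at the wrong target and is in any event not a proof.
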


\begin{proof}
Suppose first that $G' = F$ and $\mu(K) = 3$. Then $K$ maps onto a proper direct factor $T^{\oplus r} \oplus T'^{\oplus s}$ of 
$T^{\oplus \delta+3 \ell} \oplus T'^{\oplus \delta + 3 \ell}$. It follows that the middle horizontal sequence in \eqref{equ5.1} is obtained from the top one 
by taking a direct sum with $T^{\oplus \delta +3 \ell -r} \oplus T'^{\oplus \delta + 3 \ell -s}$, a contradiction. This completes the case $\delta = 1$, 
since otherwise $\mu(G) \leq 3$. 

For $\delta = 2$ we are left with the two possibilities, 
\begin{enumerate}
 \item[(a)] $G' = F$ and $d_K = 3n_K -1$,
 \item[(b)] $d_{G'} = 3n_{G'} + 1$ and $ d_K = 3n_K$.
\end{enumerate}

(a): From \eqref{equ5.2} we get 
$$
d_K + d_{G'} \geq \left( 3 + \frac{2}{4+7 \ell} \right) (n_K + n_{G'}).
$$
In other words,
$$
3n_K -1 + 3\ell +2 \geq \left( 3 + \frac{2}{4+7 \ell} \right) (n_K + \ell),
$$
which gives
$$
n_K \leq \frac{4+5\ell}{2} < 2 + 3 \ell.
$$
Now, if $K$ is semistable, then $K^* \otimes T$ is semistable of slope $\leq 1$, so 
\begin{equation} \label{equ5.4}
h^0(K^* \otimes T) \leq n_K
\end{equation}
by Proposition \ref{p1.1}.

If $K$ is not semistable, it must have a proper semistable subbundle $K'$ of slope 3 while the quotient $K/K'$ is also semistable. Both $K'^* \otimes T$ 
and $(K/K')^* \otimes T$ are semistable of slope $\leq 1$. So again \eqref{equ5.4} holds. It now follows that at least one factor $T$ (and one factor $T'$)
splits off $E$, a contradiction.

(b): In this case \eqref{equ5.2} yields
$$
n_K + n_{G'} \leq \frac{4 + 7 \ell}{2}.
$$
By stability of $F$,
$$
n_{G'} > \frac{\ell}{2}.
$$
So 
$$
n_K < 2 + 6 n_{G'} = 2d_{G'}.
$$
Now $K$ maps into $T^{\oplus \delta +3 \ell}  \oplus T'^{\oplus \delta + 3 \ell}$ as a direct factor $T^{\oplus r} \oplus T'^{\oplus s}$ with $r + s = n_K$. 
It follows at once that \eqref{equ5.3} holds. This completes the proof of the proposition.
\end{proof}

\begin{cor} \label{c3.9}
If $1 \leq \delta \leq 2$, we have
\begin{equation} \label{e5.5}
K_C \otimes E^* \in B(2\delta +7 \ell, 5\delta+ 21 \ell, 3\delta + 12 \ell).
\end{equation}
\end{cor}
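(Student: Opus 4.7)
The plan is to recognise that essentially all of the work has been done by the preceding proposition: the corollary is simply the Serre-dual repackaging of the existence of $E$ as a stable bundle with many sections. The first observation is that, for $1 \leq \delta \leq 2$, the proposition gives the stability of $E$, and since dualising and tensoring with the line bundle $K_C$ both preserve stability, $K_C \otimes E^*$ is automatically stable. Thus the whole task reduces to verifying the three numerical invariants.

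For the rank and degree I would simply record that $K_C \otimes E^*$ has the same rank as $E$, namely $n_E = 2\delta + 7\ell$, and that a one-line calculation using $\deg K_C = 6$ and $d_E = 7\delta + 21\ell$ yields
$$\deg(K_C \otimes E^*) = 6(2\delta + 7\ell) - (7\delta + 21\ell) = 5\delta + 21\ell,$$
matching the claimed degree.

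The only point requiring any argument at all is the section count. I would apply Serre duality to obtain $h^0(K_C \otimes E^*) = h^1(E)$, then use Riemann--Roch to rewrite $h^1(E) = h^0(E) - \chi(E)$. Since $\chi(E) = d_E - n_E(g-1) = (7\delta + 21\ell) - 3(2\delta + 7\ell) = \delta$, this reads $h^1(E) = h^0(E) - \delta$. The lower bound $h^0(E) \geq 4\delta + 12\ell$ was already recorded in Example \ref{ex5.2} (it comes for free from the defining extension, since $h^0(T) = h^0(T') = 2$), and substituting gives $h^0(K_C \otimes E^*) \geq 3\delta + 12\ell$, as required. I do not expect any genuine obstacle here: the real content lives in the proposition establishing stability of $E$, and the corollary is merely its Serre-dual restatement packaged through Riemann--Roch.
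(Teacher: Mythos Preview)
Your proof is correct and is exactly the unpacking the paper has in mind: the corollary is stated without proof in the paper, as an immediate consequence of the proposition establishing stability of $E$, together with the numerical data recorded in Example \ref{ex5.2} and the standard Serre duality/Riemann--Roch computation you carry out.
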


\begin{rem}
{\rm
This is proved only when $T \not \simeq T'$. If $T \simeq T'$, then $C$ occurs in a flat family of 
curves whose general member is a non-hyperelliptic curve of genus 4 with $T \not \simeq T'$. 
It follows that $K_C \otimes E^* \in \widetilde B(2\delta +7 \ell, 5\delta+ 21 \ell, 3\delta + 12 \ell)$. If $\delta = 1$ or $\delta = 2$ and 
$\ell$ is odd, $\gcd(2\delta+7\ell, 5 \delta + 21 \ell) = 1$. So in these cases \eqref{e5.5}
still holds.
}
\end{rem}

\begin{rem}
{\rm
The bundles constructed in Corollary \ref{c3.9} all give rise to points $(\mu, \lambda)$ in the 
BN-map which lie on the line  $\lambda = \frac{3}{7}(\mu + 1)$. For $\frac{5}{2} <
\mu < 3$, this lies slightly under the line $\lambda = \frac{\mu}{2} + \frac{1}{4}$ from
Lemma \ref{l2.8}. In fact, the points $(\mu,\lambda)$ lie below the BN-curve.

In view of this and the other examples constructed above, the upper bounds of Section 4 in the 
case $T \not \simeq T'$ are close to being best possible, at least if we consider only piecewise
linear upper bounds. When $T \simeq T'$ on the other hand, we know of no stable bundles of rank 
$n \geq 2$ which are above the line $\lambda = \frac{\mu}{2} + \frac{1}{4}$ in the range $\frac52<\mu<3$, but the upper 
bound of Proposition \ref{p4.3} is $\lambda = \mu -1$.
}
\end{rem}
Finally, we construct some further examples of non-empty $B(n,d,k)$, using positive and negative elementary transformations of bundles
$E_{L_1} \oplus \cdots \oplus E_{L_r}$. We begin with a lemma.

\begin{lem} \label{l4.11}
Let $C$ be a non-hyperelliptic curve of genus $4$ and $L = K_C(-p)$ for some $p \in C$.
Then
\begin{enumerate}
 \item[(i)] $h^0(E_L^* \otimes T) = h^0(E_L^* \otimes T') = 1$;
 \item[(ii)] if $T \not \simeq T'$, the bundle $E_L$ possesses precisely $2$ line subbundles of degree $2$, which are isomorphic to $T(-p)$ and $T'(-p)$ respectively;
 \item[(iii)] if $T \simeq T'$, the bundle $E_L$ possesses only one line subbundle of degree $2$, which is isomorphic to $T(-p)$. 
\end{enumerate}
\end{lem}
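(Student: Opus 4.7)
The plan is to deduce (i) from the base-point-free pencil trick, and then to derive (ii) and (iii) by classifying the degree-$2$ line subbundles of $E_L$ through their quotients.

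For (i), I would tensor the defining sequence $0\to E_L^*\to H^0(L)\otimes\cO_C\to L\to 0$ by $T$ and take cohomology, identifying $H^0(E_L^*\otimes T)$ with the kernel of the multiplication map
$$\mu\colon H^0(L)\otimes H^0(T)\to H^0(L\otimes T).$$
I would then compute the same kernel a second way, using the pencil sequence $0\to T^{-1}\to H^0(T)\otimes\cO_C\to T\to 0$ (i.e.\ $D(T)=T$): tensoring by $L$ and taking cohomology identifies $\ker\mu$ with $H^0(L\otimes T^{-1})=H^0(T'(-p))$. Since $T'$ is a base-point-free pencil on the non-hyperelliptic curve $C$, we have $h^0(T'(-p))=h^0(T')-1=1$, and therefore $h^0(E_L^*\otimes T)=1$. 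The symmetric computation yields $h^0(E_L^*\otimes T')=1$.

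For (ii) and (iii), stability of $E_L$ at slope $5/2$ forces every line subbundle to have degree $\leq 2$. A degree-$2$ line subbundle $M\subset E_L$ is automatically saturated and determines a line bundle quotient $N=E_L/M$ of degree $3$; conversely, any nonzero map $E_L\to N$ to a degree-$3$ line bundle must be surjective (otherwise the kernel has degree $>2$, contradicting stability). Hence degree-$2$ subbundles of $E_L$ correspond bijectively to points of $\bigsqcup_N \mathbb{P}(\Hom(E_L,N))$ as $N$ ranges over line bundles of degree $3$.

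To restrict the possible $N$, I would reuse the multiplication-kernel identification of (i), with $T$ replaced by $N$: $\Hom(E_L,N)$ is the kernel of $H^0(L)\otimes H^0(N)\to H^0(L\otimes N)$. For $h^0(N)\leq 1$ this kernel vanishes (when $h^0(N)=1$, multiplication by the nonzero section of $N$ is injective on the integral curve $C$), so $\Hom(E_L,N)\neq 0$ forces $h^0(N)\geq 2$. By Clifford's theorem on the non-hyperelliptic curve $C$ of genus $4$, a degree-$3$ line bundle with $h^0\geq 2$ satisfies $h^0=2$ and has no base point (otherwise removing it would yield a $g^1_2$, making $C$ hyperelliptic), so $N$ is a base-point-free $g^1_3$, i.e.\ $N\in\{T,T'\}$. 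Combining with (i), each of $T,T'$ gives a unique (up to scalar) surjection $E_L\to T$, respectively $E_L\to T'$, whose kernel has determinant $L\otimes T^{-1}\cong T'(-p)$, respectively $L\otimes T'^{-1}\cong T(-p)$. If $T\not\cong T'$ the two subbundles are distinct, giving (ii); if $T\cong T'$ they coincide, giving (iii). The main obstacle I anticipate is the twofold identification of $\ker\mu$ in (i); once that is secured, the rest follows from Clifford together with the standard correspondence between subbundles and quotients of a rank-$2$ bundle.
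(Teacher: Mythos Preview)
Your proof is correct, and for (ii)/(iii) it runs essentially parallel to the paper's: both arguments pass from a degree-$2$ line subbundle to its degree-$3$ quotient, observe that this quotient must be a $g^1_3$ (hence $T$ or $T'$), and read off the kernel as $T'(-p)$ or $T(-p)$. The only difference there is how you force $h^0(N)\ge 2$: the paper uses the section count $h^0(E_L)=3$, $h^0(M)\le 1$, while you reuse the multiplication-kernel identification.

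Where you genuinely diverge is in (i). The paper bounds $h^0(E_L^*\otimes T)\le 1$ by noting that $E_L^*\otimes T$ is stable of slope $\tfrac12$ and invoking Proposition~\ref{p1.1}, then produces a nonzero map $E_L\to T$ via functoriality of the dual span (a nonzero $T\to L$ induces $D(L)\to D(T)=T$). You instead compute $h^0(E_L^*\otimes T)$ exactly, identifying it first with the multiplication kernel and then, via the base-point-free pencil trick for $T$, with $H^0(L\otimes T^{-1})=H^0(T'(-p))$, which has dimension $1$ since $T'$ is base-point-free. Your route is more self-contained---it needs neither the stability of $E_L$ nor the slope bound of Proposition~\ref{p1.1}---and delivers the exact value in one stroke; the paper's route is shorter given the infrastructure already in place and makes transparent where the nonzero homomorphism $E_L\to T$ comes from.
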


\begin{proof}
Since $E_L^* \otimes T$ is a stable bundle of slope $\frac{1}{2}$, we have $h^0(E_L^* \otimes T) \leq 1$ by Proposition \ref{p1.1}. Now $h^0(T) = 2$, so 
$h^0(K_C \otimes T^*) = 2$ and $h^0(K_C(-p) \otimes T^*) \geq 1$. Thus there is a non-zero homomorphism $T \ra L$ and hence also a non-zero 
homomorphism $D(L) \ra D(T)$, i.e. $E_L \ra T$. So $h^0(E_L^* \otimes T) = 1$ and similarly $h^0(E_L^* \otimes T') = 1$.

By (i), there is a homomorphism $E_L \ra T$, necessarily surjective, and the kernel is a line subbundle isomorphic to $T'(-p)$. On the 
other hand, if $F$ is any line subbundle of degree 2, $E_L/F$ has degree 3 and $h^0 \geq 2$, so is isomorphic to $T$ or $T'$. This implies (ii) and (iii).
\end{proof}

\begin{ex} \label{ex4.12}
{\rm Consider non-trivial extensions 
\begin{equation} \label{e4.5}
0 \ra E_{L_1} \oplus \cdots \oplus E_{L_r} \ra E \ra \CC_q \ra 0
\end{equation}
with $r \geq 1$ and $q \in C$. We have $L_i \simeq K_C(-p_i)$ and we suppose $p_1, \dots,p_r$ are distinct points of $C$.
We propose to prove that the general extension \eqref{e4.5} gives rise to a stable bundle $E$. 

If $E$ is not stable, the extension \eqref{e4.5} is induced from an extension
\begin{equation} \label{e4.6}
0 \ra F \ra F' \ra \CC_q \ra 0 
\end{equation}
with $F$ a subbundle of $E_{L_1} \oplus \dots \oplus E_{L_r}$ of rank $n_F$ and degree $d_F$, 
such that 
$$
\frac{d_F + 1}{n_F} \geq \frac{d_E}{n_E} = \frac{5r+1}{2r} \quad \mbox{and} \quad \frac{d_F}{n_F} \leq \frac{5}{2},
$$
since $E_{L_1} \oplus \cdots \oplus E_{L_r}$ is semistable.

Since the $E_i$ are pairwise non-isomorphic, the only subbundles of $E_{L_1} \oplus \cdots \oplus E_{L_r}$ of slope $\frac{5}{2}$ are the partial 
direct sums. The general extension \eqref{e4.5} is not induced from any of these.

Now suppose that $\frac{d_F}{n_F} < \frac{5}{2}$ and write
$$
\frac{5}{2} - \frac{d_F}{n_F} = \frac{a}{2n_F}
$$
with $a$ a positive integer. Then we require 
$$
\frac{a}{2n_F} \leq \frac{1}{n_F} - \frac{1}{2r}, \quad \mbox{i.e.} \quad a \leq 2 - \frac{n_F}{r}.
$$
This gives $n_F \leq r$ and $a=1$ and hence
$$
d_F = \frac{5n_F -1}{2}.
$$
If $n_F = 1$, the subbundle $F$ projects into some factor $E_{L_i}$ isomorphically onto a line subbundle of $E_{L_i}$ of degree 2. By Lemma \ref{l4.11}
and the assumption that $p_1, \dots, p_r$ are all distinct, $F$ must be one of the one or two line subbundles of degree 2 of the factor $E_{L_i}$.
The general extension \eqref{e4.5} is not induced from either of the corresponding extensions \eqref{e4.6}. In particular, this completes the proof of the stability
of $E$ in the case $r=1$.

It remains to consider the case 
$$
n_F \; \mbox{odd}, \;\;\; 3 \leq n_F \leq r \;\;\; \mbox{and} \;\;\; d_F= \frac{5n_F-1}{2}.
$$
If the projection of $F$ into $E_{L_i}$ has rank 1, then the kernel has rank $n_F -1$ and degree $\geq \frac{5n_F -1}{2} - 2 = \frac{5}{2}(n_F -1)$.
Hence its slope must be $\frac{5}{2}$, which means that it must be a partial direct sum of factors of $E_{L_1} \oplus \cdots \oplus E_{L_r}$
not including $E_{L_i}$. Thus \eqref{e4.5} is induced from a partial direct sum of factors.

On the other hand, if $F$ projects into $E_{L_i}$ with rank 2, then the kernel has rank $n_F -2$ and degree $\frac{5(n_F-2)-1}{2}$. It follows by induction that $F$ is again 
contained in a partial direct sum of factors. This completes the proof that the general extension \eqref{e4.5} gives rise to a stable bundle $E$.

Hence for all $r \geq 1$ we have
$$
B(2r,5r+1,3r) \neq \emptyset.
$$
Note that, as $r \ra \infty$, the slope $\frac{5r+1}{2r} \ra \frac{5}{2}$. Also, if $r=1$, we obtain
\begin{equation}  \label{equ5.6}
B(2,6,3) \neq \emptyset,
\end{equation} 
thus showing that the upper bound of Lemma \ref{l2.6} can be attained.
}
\end{ex}

\begin{rem}
{\rm
By Proposition \ref{p3.7}, $B(n,3n,k) \neq \emptyset$ for $k \leq \frac{5}{4}n$.
We do not know whether there exist stable bundles with $\mu = 3$ and $\frac{5n}{4} < h^0 < \frac{3n}{2}$ except for the bundles 
$D(K_C)(p) \in B(3,9,4)$.
}
\end{rem}

\begin{ex} \label{ex5.10}
{\rm
Now consider negative elementary transformations of $E_{L_1} \oplus \cdots \oplus E_{L_r}$,
$$
0 \ra E \ra E_{L_1} \oplus \cdots \oplus E_{L_r} \ra \CC_q \ra 0
$$
with $L_1, \dots L_r$ as in Example \ref{ex4.12} and $q \in C$. Suppose that all the maps 
$E_{L_i} \ra \CC_q$ are non-zero. 
If $r = 1$, we suppose moreover that the restriction of the map $E_{L_1} \ra \CC_q$ 
to either of the line subbundles of $E_{L_1}$ of degree 2 is non-zero.
We propose again to prove that $E$ is stable.

If not, then there exists a proper subbundle $F$ of $E$ with
$$
\frac{d_F}{n_F} \geq \frac{5}{2} - \frac{1}{2r}.
$$
Then $F$ is a subbundle of $E_{L_1} \oplus \cdots \oplus E_{L_r}$. If $\frac{d_F}{n_F} = \frac{5}{2}$, then $F$ is a partial direct sum of factors of 
$E_{L_1} \oplus \cdots \oplus E_{L_r}$. This is impossible, since no $E_{L_i}$ is contained in $E$. Now write
\begin{equation} \label{eq5.9}
\frac{5}{2} - \frac{d_F}{n_F} = \frac{a}{2n_F}
\end{equation}
with $a$ a positive integer. This gives
$$
a \leq \frac{n_F}{r}.
$$
So $a = 1$ and $n_F \geq r$. Also 
\begin{equation} \label{eq5.10}
d_F = \frac{5n_F -1}{2}.
\end{equation}
If any of the projections $F \ra E_{L_i}$ has rank 1, then $\mu(F) \leq 2$ by stability of the $E_{L_i}$.
By \eqref{eq5.9} this means that $n_F = 1$ and hence  $d_F = 2, r = 1$. This is excluded by the definition 
of the exact sequence defining $E$.
 
So at least one of the projections $F \ra E_{L_i}$ has rank 2. By \eqref{eq5.10}, $n_F \geq 3$. Let $F'$ denote the kernel of $F \ra E_{L_i}$. Then 
$$
d_{F'} \geq \frac{5(n_F -2) -1}{2}
$$ 
with equality only if $F \ra E_{L_i}$ is surjective. If $F \ra E_{L_i}$ is not surjective, this contradicts the semistability of 
$E_{L_1} \oplus \dots \oplus E_{L_r}$
Since $n_{F'} = n_F -2$, the result follows by induction.

The conclusion is that 
\begin{equation} \label{e5.10}
B(2r,5r-1,3r-1) \neq \emptyset.
\end{equation}
}
\end{ex}

\begin{prop}   \label{p5.13}
Let $C$ be a non-hyperelliptic curve of genus $4$. Suppose that $2n < d < 3n$. Then for $n \geq 2$,
$$
B(n,d,n+1) \neq \emptyset.
$$
\end{prop}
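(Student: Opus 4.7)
\emph{Proof plan.}
Since Corollary \ref{cor5.2} already covers the range $d \geq 2n+3$, it suffices to handle $d = 2n+1$ and $d = 2n+2$. For $d = 2n+1$: when $n = 2$ one has $B(2,5,3) = \{E_L\} \neq \emptyset$ by Lemma \ref{l4.1}(ii); when $n \geq 3$, Proposition \ref{p3.3} with $r = 1$ and $s = n - 3$ (recall $g = 4$) produces a stable bundle in $B(n, 2n+1, n+1)$.

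For $d = 2n+2$, I would split by the size of $n$. For $n \geq 7$, Proposition \ref{p5.14} with $r = 1$ and $s = n - 3 \geq 4$ gives the result directly. For $n = 6$, Example \ref{ex5.10} with $r = 3$ yields $B(6,14,8) \neq \emptyset$ and hence $B(6,14,7) \neq \emptyset$. For the remaining cases $n \in \{3, 4, 5\}$, I would take $F \in B(n, 2n+1, n+1)$ from the first step (produced by Proposition \ref{p3.3} as a generic extension of $\CC_p$ by $D(K_C) \oplus M_1 \oplus \cdots \oplus M_{n-3}$) and form the generic positive elementary transformation
\begin{equation*}
0 \to F \to E \to \CC_q \to 0,
\end{equation*}
so that $E$ has rank $n$, degree $2n+2$, and $h^0(E) \geq h^0(F) \geq n+1$.

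The hard part is verifying stability of $E$ in this last case. A destabilising sub-bundle $G \subsetneq E$ must satisfy $\mu(G) > 2 + 2/n$; combining the stability of $F$ (slope $2 + 1/n$) with $G/(G \cap F) \hookrightarrow \CC_q$ forces $n_G < n/2$, $d_G = 2n_G + 1$, and $H := G \cap F$ to be a saturated sub-bundle of $F$ of slope exactly $2$. For $n = 3$ there are no such $H$ for generic $F$, since the only candidate sub-line-bundles of degree $2$ would come from sub-line-bundles of $D(K_C)$ of degree $1$ lifting across the extension defining $F$, and this is excluded by a generic choice of extension class together with the stability of $D(K_C)$. For $n = 4, 5$, the candidates are the line summands $M_i$ (rank one) and, when $n = 5$, also the direct sum $M_1 \oplus M_2$ (rank two); all other candidates are again excluded by genericity. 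For each such $H$, the extension classes in $\Ext^1(\CC_q, F) \cong F_q$ producing a destabilising $G$ form the linear subspace $H_q \subset F_q$ of dimension $n_G < n$, so the union over the finite list is a proper subvariety of $F_q$, and a generic extension class avoids it, producing $E$ stable in $B(n, 2n+2, n+1)$.
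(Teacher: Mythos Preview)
Your reduction to $d=2n+1$ and $d=2n+2$, and your treatment of $d=2n+1$, of $n\ge 7$, and of $n=6$, all match the paper's proof. The divergence is for $d=2n+2$ with $n\in\{3,4,5\}$: the paper uses a \emph{negative} elementary transformation of $D(K_C)(p)$ for $n=3$ (automatically stable, since $D(K_C)(p)$ has integral slope) and cites \cite[Proposition 7.6]{bbn1} and \cite[Corollary 5.2]{bbn2} for $n=4,5$, whereas you attempt a uniform positive-elementary-transformation construction.

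There is a genuine gap in your stability argument. You write that a destabilising $G\subsetneq E$ must satisfy $\mu(G)>2+\tfrac{2}{n}$, but to prove \emph{stability} you must exclude $\mu(G)\ge 2+\tfrac{2}{n}$. Redoing the arithmetic with $\ge$ gives only $n_G\le n/2$, not $n_G<n/2$. For $n=4$ this admits $n_G=2$ (with $d_G=5$, so $\mu(G)=\tfrac{5}{2}=\mu(E)$), a case you do not treat. Closing it would require showing that a generic $F$ of your form has no rank-$2$ degree-$4$ subbundle, which in turn means controlling the rank-$2$ degree-$3$ subbundles of $D(K_C)\oplus M_1$ --- a non-trivial classification (one needs, e.g., $\Hom(D(K_C),N)=0$ for every degree-$3$ line bundle $N\ne T,T'$).

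More generally, your assertion that the relevant $H\subset F$ form a ``finite list'' is not justified. Even for $n=3$, concluding that generic $F$ has no degree-$2$ line subbundle requires knowing that the degree-$1$ line subbundles of $D(K_C)$ form only a $1$-parameter family (namely $\{\cO(p'):p'\in C\}$, via $D(K_C)/\cO(p')\cong E_{K_C(-p')}$ and Lemma~\ref{l4.1}(ii)); for $n=5$ the analogous count for rank-$2$ subbundles is more involved. The paper's ad hoc constructions for $n=3,4,5$ sidestep all of this bookkeeping at the cost of invoking outside references.
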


\begin{proof}
If $d \geq 2n+3$, then $B(n,d,n+1) \neq \emptyset$ by Corollary \ref{cor5.2}. For $d \geq 2n+4$ this follows also from the fact that 
$B(n,d',n+1) \neq \emptyset$ for $n+4 \leq d' < 2n$ (see Proposition \ref{p1.1}). It remains to consider the cases $d = 2n+1$ and $d = 2n+2$.

For $n=2,\; d=5$, we have $E_L \in B(2,5,3)$. For $d = 2n+1$ with $n \geq 3$, the examples of Proposition \ref{p3.3} with $r=1$ show that $B(n,d,n+1) \neq \emptyset$.
Alternatively, for $n \geq 3, \; B(n,2n,n+1) \neq \emptyset$ by Proposition \ref{p1.2}. Taking an elementary transformation we obtain 
$B(n,2n+1,n+1) \neq \emptyset$. 

Now suppose $d = 2n+2$. For $n \geq 7$, we have $B(n,2n+2,n+1) \neq \emptyset$ by Proposition \ref{p5.14}. For $n = 3$, define $E$ by an exact sequence
\begin{equation}  \label{e5.11}
0 \ra E \ra D(K_C)(p) \ra \CC_p \ra 0
\end{equation}
for some $p \in C$.
Since $h^0(D(K_C)(p)) = h^0(D(K_C)) =4$ (see Lemma \ref{l2.6}), all sections of $D(K_C)(p)$ vanish at $p$. It follows that $h^0(E) = h^0(D(K_C)(p)) = 4$. Moreover, $E$ is stable, 
since 
$D(K_C)(p)$ is stable with integral slope. So $E \in B(3,8,4)$.

It follows from \cite[Proposition 7.6]{bbn1} that $B(4,10,5) \neq \emptyset$, at least if $C$ is Petri, i.e. $T \not \simeq T'$.
In fact, the proof of \cite[Proposition 7.6]{bbn1} assumes only that the dimensions of certain BN-loci are as expected. This is easy to see even when $T \simeq T'$.
For $n = 5$, $B(5,12,6) \neq \emptyset$ 
by \cite[Corollary 5.2]{bbn2} for $C$ general and hence for any $C$. For $n=6$, consider \eqref{e5.10} with $r=3$. 
We obtain $B(6,14,8) \neq \emptyset$. Hence also $B(6,14,7) \neq \emptyset$.
\end{proof}

\begin{cor} \label{c5.14}
Let $C$ be a non-hyperelliptic curve of genus $4$. Then for $n \geq 2$,
$$
B(n,d,n+1) \neq \emptyset \;\; \Leftrightarrow \;\; \beta(n,d,n+1) \geq 0.
$$
\end{cor}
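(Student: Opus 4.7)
The plan is to write down $\beta$ explicitly and then subdivide according to the range of $d$, assembling the ingredients from the preceding sections. A direct expansion of \eqref{equ2.1} with $g=4$ and $k=n+1$ yields
$$
\beta(n,d,n+1) = (n+1)d - n(n+5),
$$
so $\beta \geq 0$ is equivalent to $d \geq n+4-\frac{4}{n+1}$; for integer $d$ this reads $d \geq n+4$ when $n \geq 4$, $d \geq 6$ when $n=3$, and $d \geq 5$ when $n=2$.

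For the forward implication $\beta \geq 0 \Rightarrow B(n,d,n+1) \neq \emptyset$, I would handle the following ranges in turn. When $d < 2n$, Proposition \ref{p1.1}(iii) applies and the hypothesis $1 = (n+1)-n \leq (d-n)/4$ is exactly $d \geq n+4$. When $d = 2n$, Proposition \ref{p1.2}(i) supplies $B(n,2n,n+1) \neq \emptyset$ via $n+1 \leq 5n/4$ when $n \geq 4$, and the only remaining case consistent with $\beta \geq 0$ is the exception $(n,d,k)=(3,6,4)$. When $2n < d < 3n$, this is precisely Proposition \ref{p5.13}. When $d = 3n$, Proposition \ref{p3.7}(i) covers $n \geq 4$ (since $n+1 \leq n/2+3n/4$) together with the exception $(3,9,4)$, while the case $n=2$, $d=6$ is equation \eqref{equ5.6} from Example \ref{ex4.12}. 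When $3n < d \leq 4n$, Serre duality $E \mapsto K_C \otimes E^*$ identifies $B(n,d,n+1)$ with $B(n,6n-d,4n+1-d)$; since $4n+1-d \leq n+1$, this contains $B(n,6n-d,n+1)$, which is nonempty by Proposition \ref{p5.13} (when $2n < 6n-d < 3n$) or by Proposition \ref{p1.2}(i) applied with $k=1$ (when $6n-d = 2n$). Finally, for $d \geq 4n+1$ every stable bundle satisfies $h^0(E) \geq \chi(E) = d-3n \geq n+1$, so $B(n,d,n+1) = M(n,d) \neq \emptyset$.

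For the converse $B(n,d,n+1) \neq \emptyset \Rightarrow \beta \geq 0$, I would first observe that $\beta(n,d,k)$ is invariant under Serre duality (an immediate substitution check: $\beta(n,d,k) = \beta(n,6n-d,k+3n-d)$), so the problem reduces to $d \leq 3n$. In the range $2n < d \leq 3n$, a direct computation gives $\beta(n,d,n+1) \geq \beta(n,2n+1,n+1) = (n-1)^2 \geq 0$. In the range $d \leq 2n$, the upper bounds of Proposition \ref{p1.1}(iii) and Proposition \ref{p1.2}(i) force either $d \geq n+4$ or $(n,d,k)=(3,6,4)$, and both lie in $\beta \geq 0$.

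No single step is genuinely deep: the Corollary is essentially a bookkeeping assembly of Proposition \ref{p5.13} together with the other existence results of Section \ref{exist}, with Serre duality used to reach the high-degree range $d > 3n$. The one fussy point is verifying that the two exceptional cases $(3,6,4)$ and $(3,9,4)$, which saturate $\beta = 0$, are genuinely covered by the forward direction — but these are precisely the exceptions recorded in Propositions \ref{p1.2} and \ref{p3.7}, so no separate work is required.
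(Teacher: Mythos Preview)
Your argument is correct and follows essentially the same route as the paper: compute $\beta$ explicitly, handle $d\le 2n$ via Propositions \ref{p1.1} and \ref{p1.2}, and invoke Proposition \ref{p5.13} for $2n<d<3n$; you are more explicit than the paper about the range $d\ge 3n$ and about the small cases $n=2,3$, which the paper simply leaves to the reader. One small remark: the Serre-duality reduction in your converse paragraph does not land back in the case $k=n+1$, so it does not literally reduce the problem as stated --- but this is harmless, since your own computation $\beta(n,2n+1,n+1)=(n-1)^2$ together with the monotonicity of $\beta$ in $d$ already gives $\beta\ge 0$ for every $d>2n$, making the converse trivial outside the range $d\le 2n$.
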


\begin{proof}
Recall that by \eqref{equ2.1},
$$
\beta(n,d,n+1) = 3n^2 +1 -(n+1)(1 -d+4n).
$$
Hence 
$$
\beta(n,d,n+1) \geq 0 \;\; \Leftrightarrow \;\; d \geq 4+n - \frac{4}{n+1}.
$$
For $n \geq 4$, this gives $d \geq n+4$. If $d\le2n$, then $B(n,d,n+1) \neq \emptyset$ if and only if $d \geq n+4$ by Propositions \ref{p1.1} and \ref{p1.2}. 
The result now follows from the proposition. The cases $n = 2$ and $n = 3$ can be checked similarly.
\end{proof}

\begin{rem}
{\rm
We have shown above that $B(n,2n+2,n+1) \neq \emptyset$ for all $n$. If we could prove that this locus contains a generated bundle, then it would follow that the strong form
of Butler's conjecture \cite[Conjecture 1.3]{bbn2} holds in the case $g = 4,\; d=2n+2$. This is not currently known (see \cite[Section 6]{bbn2}).
}
\end{rem}

\section{Extremal bundles and bundles of low rank}

The bundles $D(K_C), E_L, T$ and $T'$ are extremal in the sense that the corresponding points of the BN-map (see Section \ref{map}) lie on the lines that we have established 
as the upper bounds for non-emptiness of $B(n,d,k)$. 

By Proposition \ref{p1.2}(i), $D(K_C)$ is the only stable bundle representing the point $(2,\frac{4}{3})$ and by Remark \ref{rem4.2}, $T$ and $T'$ are the only 
stable bundles representing the 
point $(3,2)$. According to Lemma \ref{l4.1}(ii), the only stable bundles of rank 2 representing the point $(\frac{5}{2},\frac{3}{2})$ are the bundles $E_L$ for $L = K_C(-p)$.
The following proposition shows that there are no stable bundles on the line $k = n + \frac{1}{3}(d-n)$ with $2n \leq d \leq \frac{5}{2}n$ except for the bundles  $D(K_C)$ and $E_L$.  

\begin{prop} \label{p6.1}
Let $C$ be a non-hyperelliptic curve of genus $4$. Suppose $k =n + \frac{1}{3}(d-n)$. Suppose further that $2 < \frac{d}{n} \leq \frac{5}{2}$ and, if $d = \frac{5}{2}n$,
then $n>2$.
Then 
$$
B(n,d,k) = \emptyset.
$$
\end{prop}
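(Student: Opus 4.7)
The plan is to argue by contradiction: suppose $E \in B(n,d,k)$ is stable of rank $n$, degree $d$, with $h^0(E) \geq k = (2n+d)/3$, $2 < \mu(E) \leq 5/2$, and $n > 2$ when $\mu(E) = 5/2$. The first step is to force the equality $h^0(E) = k$. If instead $h^0(E) > n + \frac{1}{3}(d-n)$, Lemma \ref{l2.5} (applicable since $\mu(E) \in (2,3)$) produces a subbundle $E_L \subset E$ with $\mu(E_L) = 5/2$ for some $L = K_C(-p)$. Being a proper (rank $2 < n$) subbundle of slope $\geq \mu(E)$, it contradicts strict stability in both allowed cases ($\mu(E) < 5/2$, or $\mu(E) = 5/2$ with $n > 2$). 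The same slope argument shows $\Hom(E_L, E) = 0$ for every such $L$: a nonzero $\phi : E_L \to E$ that is injective on sheaves saturates to a destabilizing subbundle of $E$, while a $\phi$ with nonzero kernel (necessarily rank $1$, of degree $\leq 2$) has rank-one image saturating to a line subbundle of $E$ of degree $\geq 3$, slope $\geq 3 > \mu(E)$, again a contradiction.

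Next I would exploit this vanishing in the exact sequence $0 \to E_L^* \otimes E \to E^{\oplus 3} \to E \otimes L \to 0$. Since $h^1(E \otimes L) = 0$ (because $E^*(p)$ is semistable of negative slope), the multiplication map $H^0(L) \otimes H^0(E) \to H^0(E \otimes L)$ is an isomorphism of $(d+2n)$-dimensional spaces. Repeating the construction with $K_C$ in place of $L$, where $h^0(K_C) = 4$, the kernel of the corresponding multiplication map is $\Hom(D(K_C), E)$, which by dimension counting has dimension at least $4k - (d+3n) = \frac{1}{3}(d-n) > 0$. A parallel slope argument applied to the stable bundle $D(K_C)$ of slope $2 < \mu(E)$ shows every nonzero map $D(K_C) \to E$ is injective as a map of sheaves. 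Let $\tilde{D}$ denote the saturation of such an embedding; it is a rank-$3$ subbundle of $E$ with $\deg \tilde{D} \in \{6,7\}$, the only values compatible with $2 \leq \mu(\tilde D) < \mu(E) \leq 5/2$.

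The final step is induction on $n$: the torsion-free quotient $E' := E/\tilde{D}$ has rank $n-3$ and degree $d - \deg \tilde D$, and the sections of $E$ that survive give $h^0(E') \geq k - h^0(\tilde D)$, which a direct computation places on or above the line $\lambda = (\mu + 2)/3$ at the invariants of $E'$. If $E'$ is stable with slope still in $(2, 5/2]$, the inductive hypothesis yields a contradiction; if $E'$ fails to be semistable, its maximal destabilizing subbundle lifted together with $\tilde D$ produces a subbundle of $E$ of slope $\geq \mu(E)$, violating the stability of $E$. The \textbf{main obstacle} is carrying out this induction cleanly: one must treat separately the cases $\deg \tilde D = 6$ and $\deg \tilde D = 7$, check the small-rank base cases ($n = 4, 5$) by hand, and handle the boundary regime where $\mu(E') > 5/2$ by appealing instead to the bounds of Lemma \ref{l4.1} or Proposition \ref{p4.3}.
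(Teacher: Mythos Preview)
Your setup---forcing $h^0(E)=k$, showing $\Hom(E_L,E)=0$, and producing a nonzero map $D(K_C)\to E$---is correct and close in spirit to the paper. But the inductive step has a real gap. The claim that a destabilizing subbundle $F'\subset E'=E/\tilde D$ lifts to a destabilizing subbundle of $E$ is false: the preimage $F$ of $F'$ has slope equal to a weighted average of $\mu(\tilde D)$ and $\mu(F')$, and since $\mu(\tilde D)<\mu(E)$ there is no reason this average should reach $\mu(E)$. Concretely, take $n=8$, $d=20$, $\tilde D$ of degree $6$: then $E'$ has rank $5$ and degree $14$, and a hypothetical rank-$2$ subbundle $F'$ of degree $6$ would destabilize $E'$ while its preimage in $E$ has slope $12/5<5/2=\mu(E)$. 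So you cannot conclude that $E'$ is semistable, and without semistability neither the inductive hypothesis nor Lemma~\ref{l4.1}/Proposition~\ref{p4.3} applies. Even if $E'$ were semistable, the boundary regime is not covered: with $\deg\tilde D=6$ and $\mu(E')>5/2$, Proposition~\ref{p4.3} gives only $h^0(E')\le d-n-3$, and the needed inequality $d-n-3<k-4$ is equivalent to $d<\tfrac{5n}{2}-\tfrac32$, which fails exactly at $d=\tfrac{5n}{2}$. The base cases $n=4,5$ are likewise deferred without a method.

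The paper avoids induction entirely. From $\Hom(E_L,E)=0$ together with Lemma~\ref{l2.4}(2) it first deduces that $E$ is \emph{generated} (your step~2 is precisely the ingredient needed for this). It then passes to the dual span $G=D(E)$, so that $K_C\otimes G^*$ has rank $k-n$, degree $d-2n>0$ and $h^0\ge k-n$; such a bundle must admit an effective line subbundle $M$ of degree $\ge1$. Projecting $G$ onto $K_C\otimes M^{-1}$ and comparing evaluation sequences produces a nonzero map $\alpha:E^*\to D(K_C\otimes M^{-1})^*$. Since $C$ is non-hyperelliptic, $K_C\otimes M^{-1}$ has $h^0=2$ and degree $\ge3$, or else equals $K_C(-q)$; in the first case $\alpha(E^*)$ gives a quotient line bundle of $E^*$ of degree $\le -3$, in the second $D(K_C\otimes M^{-1})\cong E_L$ is stable of rank $2$ and slope $5/2$. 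Either way stability of $E$ is contradicted in one step, with no recursion on rank.
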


\begin{proof}
Suppose $E \in B(n,d,k)$. Note that $h^0(E) = k$ by Lemma \ref{l2.5}. We first claim that $E$ is generated. If not, then there is an exact sequence
$$
0 \ra F \ra E \ra \CC_p \ra 0
$$
with $h^0(F) = h^0(E) =k$. Let $L := K_C(-p)$. Since $E \otimes L$ is stable of slope $> 7$, it follows that $E \otimes L$ is generated. Hence 
$$
h^1(F \otimes L) = h^1(E \otimes L) = 0.
$$
It follows from Lemma \ref{l2.4}(2) that $h^0(E_L^* \otimes F) > 0$. 
This contradicts the stability of $E$. Hence $E$ is generated.

So we get an exact sequence
$$
0 \ra G^* \ra H^0(E) \otimes \cO_C \ra E \ra 0
$$
with $n_G = k-n,\; d_G = d$ and $h^0(G) \geq k$. It follows that $K_C \otimes G^*$ has rank $k-n$, degree $d-2n >0$ and $h^0 \geq n_G$. 
Any such bundle necessarily has a section with a zero. So $K_C \otimes G^*$ admits a line subbundle $M$ with $h^0(M) \geq 1$ and $d_M \geq 1$ and we get the diagram
$$
\xymatrix{
0 \ar[r] & E^* \ar[d]_{\alpha} \ar[r] & W \otimes \cO_C  \ar[d] \ar[r] & G \ar[r] \ar[d] & 0\\
0  \ar[r] & H^* \ar[r]  & V \otimes \cO_C  \ar[r] \ar[d] & K_C \otimes M^* \ar[r] \ar[d] & 0\\
&  & 0 & 0 &
}
$$
where $W$ is a subspace of $H^0(G)$ of dimension $k$ and $V$ is the image of $W$ in $H^0(K_C \otimes M^*)$.
Now $K_C \otimes M^*$ is not isomorphic to $\cO_C$, since $h^0(G^*) = 0$. Hence $\dim V \geq 2$ and $d_{K_C \otimes M^*} \geq 3$, since $C$ is non-hyperelliptic.

If $\alpha = 0$, then $E^*$ maps into $W' \otimes \cO_C$, where $W = W' \oplus V'$ and $V'$ maps isomorphically to $V$. It follows that $V' \otimes \cO_C$ 
maps to a trivial direct summand of $G$, contradicting the fact that $h^0(G^*) =0$. So $\alpha \neq 0$.

If $\dim V = 2$, then $\alpha(E^*)$ is a quotient line bundle of $E^*$ of degree $\leq -3$, contradicting the stability of $E$. Otherwise $L := K_C \otimes M^*$ 
is isomorphic to $K_C(-q)$ for some point $q$ and $H \simeq E_L$. In particular $H$ is stable of degree 5 and rank 2, again contradicting the stability of $E$.
 \end{proof}
 
 \begin{rem}
 {\rm
 By Corollary \ref{c4.4}, there are no further stable bundles representing points on the line segment joining $(\frac{5}{2},\frac{3}{2})$ to $(3,2)$ even when $T \simeq T'$. 
 }
 \end{rem}

We now turn to looking at bundles of rank $\le4$.

\begin{prop}
Suppose $0 <d < 12$. Then 
$B(2,d,k) \neq \emptyset$ if and only if $\beta(2,d,k) \geq 0$, except if $(d,k) = (2,2)$ or $(10,6)$. 
In the exceptional cases, $\beta(2,d,k) > 0$ and $\widetilde B(2,d,k) \neq \emptyset$, 
but $B(2,d,k) = \emptyset$. 
\end{prop}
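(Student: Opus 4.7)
The plan is to reduce the whole problem to the range $1 \leq d \leq 6$ via Serre duality, then treat each small $d$ using results already established in the paper. For the reduction, observe that the functor $E \mapsto K_C \otimes E^*$ is exact, preserves the rank, (semi)stability, and S-equivalence classes, sends degree $d$ to degree $12-d$, and, by Serre duality and Riemann--Roch, satisfies $h^0(K_C \otimes E^*) = h^1(E) = h^0(E) + 6 - d$ (the analogous identity holds for $\gr E$). A direct calculation from \eqref{equ2.1} shows $\beta(2,d,k) = \beta(2,12-d,k+6-d)$. Hence I obtain bijections $B(2,d,k) \leftrightarrow B(2, 12-d, k+6-d)$ and $\widetilde B(2,d,k) \leftrightarrow \widetilde B(2, 12-d, k+6-d)$ that preserve $\beta$, so it suffices to handle $1 \leq d \leq 6$; the candidate exception $(10,6)$ will then be Serre-dual to $(2,2)$.

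For $d \in \{1,2,3\}$ (the range $d < 2n$), I invoke Proposition \ref{p1.1}: it yields $B(2,d,k) \neq \emptyset$ iff $d > 0$, $k \leq 2 + (d-2)/4$, and $(d,k) \neq (2,2)$. A short enumeration shows this bound agrees with $\beta(2,d,k) \geq 0$ except at $(d,k) = (2,2)$, where $\beta = 1 > 0$. To cover this exceptional pair (and its Serre dual $(10,6)$), I exhibit $E = \cO_C(p) \oplus \cO_C(q)$ with $p \neq q$, which is strictly semistable with $h^0(E) = 2$, showing $\widetilde B(2,2,2) \neq \emptyset$; Serre duality then yields $\widetilde B(2,10,6) \neq \emptyset$. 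For $d = 4 = 2n$, Proposition \ref{p1.2}(i) gives $B(2,4,k) \neq \emptyset \iff k \leq 2$, again matching $\beta \geq 0$ with no exception.

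For $d = 5$, Lemma \ref{l4.1}(ii) supplies $E_L \in B(2,5,3)$, so $B(2,5,k) \neq \emptyset$ for all $k \leq 3$ by monotonicity in $k$, while Lemma \ref{l4.1}(i) bounds $h^0 \leq 3$ for every semistable rank-$2$ slope-$5/2$ bundle, ruling out $k \geq 4$. For $d = 6$, equation \eqref{equ5.6} of Example \ref{ex4.12} gives $B(2,6,3) \neq \emptyset$, and Lemma \ref{l2.6} gives $B(2,6,k) = \emptyset$ for $k \geq 4$. In both cases the resulting range matches $\beta \geq 0$ exactly, producing no further exception. Applying the Serre duality bijection then completes $d = 7,\dots,11$ and confirms that $(10,6)$ is the only exception there.

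I do not foresee any real conceptual obstacle; the main work is careful case-by-case bookkeeping to verify that, for each $d$, the $B$-nonemptiness range coincides with the $\beta \geq 0$ range apart from the symmetric pair $\{(2,2),(10,6)\}$, and to confirm that the strictly semistable construction $\cO_C(p)\oplus\cO_C(q)$ handles both exceptional cases simultaneously through Serre duality.
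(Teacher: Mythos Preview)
Your proof is correct and rests on the same ingredients as the paper's: Propositions \ref{p1.1} and \ref{p1.2} for small slope, the rank-$2$ facts from Lemma \ref{l4.1}, Lemma \ref{l2.6}, and \eqref{equ5.6}, together with Serre duality and Riemann--Roch. The organizational difference is that you slice by $d$ (reducing to $1\le d\le 6$ and handling each degree in turn), whereas the paper slices by $k$: it treats $k=1,2$ via Proposition \ref{p1.1}, invokes Corollary \ref{c5.14} wholesale for $k=3$, and then appeals to Serre duality for $k\ge4$. Your route is slightly more self-contained for $d=5,6$ (you cite Lemma \ref{l4.1} and \eqref{equ5.6} directly rather than the packaged Corollary \ref{c5.14}), and your explicit construction of $\cO_C(p)\oplus\cO_C(q)$ makes the exceptional semistable case visible; the paper's version is terser but relies on the reader unpacking Corollary \ref{c5.14} for $n=2$. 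Both arrive at the same verification with essentially the same work.
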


\begin{proof}
Recall that $\beta(2,d,k) = 13 - k(k-d+6)$. So
$$
\beta(2,d,k) \geq 0 \;\; \Leftrightarrow \;\; d \geq k+6 - \frac{13}{k}.
$$
If $k = 1$, then $B(2,d,1)$  is non-empty if and only if $d > 0$ (see Proposition \ref{p1.1}). If $k = 2$, $\beta(2,d,2) \geq 0$ if and only if $d \geq 2$ and $\widetilde B(2,2,2) \neq \emptyset$, 
but $B(2,2,2) = \emptyset$ (again see Proposition \ref{p1.1}). For $k = 3$, the result follows from Corollary \ref{c5.14}. The rest follows from Serre duality and Riemann-Roch.
\end{proof}

\begin{prop}
Suppose $0 < d < 18$. Then 
$B(3,d,k) \neq \emptyset$ if and only if $\beta(3,d,k) \geq 0$, except if $(d,k) = (3,3), (15,9)$ or $(9,5)$. 
In the exceptional cases, $\beta(3,d,k) > 0$ and $\widetilde B(3,d,k) \neq \emptyset$, but $B(3,d,k) = \emptyset$.
\end{prop}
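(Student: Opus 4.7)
The plan is to reduce via Serre duality to the range $d \le 9$, enumerate the finitely many $(d,k)$ with $\beta(3,d,k) \geq 0$, and then verify non-emptiness (or exceptional emptiness) case by case using the results of Sections \ref{back}--\ref{exist}. First compute
$$
\beta(3,d,k) = 28 - k(k-d+9),
$$
so $\beta\geq 0$ iff $k(k-d+9) \leq 28$; this is a finite tabulation. The map $(d,k)\mapsto(18-d,\,k-d+9)$ induced by Serre duality $E\mapsto K_C\otimes E^*$ preserves $\beta$ as well as non-emptiness of both $B$ and $\widetilde B$ (since the Serre dual of a Jordan--H\"older filtration is again a Jordan--H\"older filtration). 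So it is enough to settle $1\le d\le 9$ and then transfer.

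The three exceptional pairs are handled as follows. For $(3,3)$, Proposition \ref{p1.1}(iii) explicitly excludes $(n,n,n)$ from the non-empty $B$'s, while $L_1\oplus L_2\oplus L_3$ with distinct effective degree-$1$ line bundles $L_i$ shows $\widetilde B(3,3,3)\neq\emptyset$. For $(9,5)$, Lemma \ref{l2.6} gives $h^0(E)\leq\tfrac{3n}{2}=\tfrac{9}{2}$, hence $h^0(E)\leq 4$, for any stable $E$ of rank $3$ and slope $3$; so $B(3,9,5)=\emptyset$, and $T\oplus T\oplus L$ for an effective line bundle $L$ of degree $3$ exhibits a polystable bundle in $\widetilde B(3,9,5)$. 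The pair $(15,9)$ is the Serre dual of $(3,3)$, so both statements transfer. In each exceptional case $\beta>0$ is a direct computation.

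For the remaining non-emptiness when $d\le 6$ and $\beta\ge 0$ (with $(d,k)\neq(3,3)$), Propositions \ref{p1.1} and \ref{p1.2} suffice: the former covers $d<6$, and the latter supplies $B(3,6,k)\neq\emptyset$ for $k\le 3$ together with the extremal point $D(K_C)\in B(3,6,4)$. For $d=7,8$, Proposition \ref{p4.3} gives the cap $k\le d-3$ matching the $\beta$-bound, and Proposition \ref{p5.13} furnishes members of $B(3,7,4)$ and $B(3,8,4)$; smaller $k$ follow from the inclusion $B(3,d,k+1)\subseteq B(3,d,k)$. For $d=9$, Lemma \ref{l2.6} caps stable $k$ at $4$, and Proposition \ref{p3.7}(i) delivers $B(3,9,4)\neq\emptyset$ via its listed exceptional case $(3,9,4)$.

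Finally, for $10\le d\le 17$ with $\beta\ge 0$, apply Serre duality: the pair $(d,k)$ corresponds to $(18-d,\,k-d+9)$ with $1\le 18-d\le 8$, whose non-emptiness has been established above. The only such dual pair landing inside an excluded triple is $(3,3)\leftrightarrow(15,9)$, already dealt with. The main obstacle in the whole argument is the verification that $B(3,9,5)=\emptyset$ even though $\beta(3,9,5)=3>0$; this is precisely the content of Lemma \ref{l2.6}. Everything else is careful bookkeeping of a finite list together with the reassembly of prior results.
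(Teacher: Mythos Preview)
Your overall strategy (reduce by Serre duality to $d\le 9$, then verify case by case) is essentially the same bookkeeping as the paper's proof, which organizes by $k$ instead; both are fine. However, there is a genuine error in your emptiness argument for $d=7,8$.

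You write that ``Proposition~\ref{p4.3} gives the cap $k\le d-3$ matching the $\beta$-bound''. This fails on both counts. For $d=7$ one has $\mu(E)=\tfrac{7}{3}<\tfrac{5}{2}$, so the hypothesis of Proposition~\ref{p4.3} is not satisfied and the proposition cannot be invoked. For $d=8$ the hypothesis holds, but the conclusion is only $h^0(E)\le d-n=5$, whereas the $\beta$-bound is $k\le 4$ (indeed $\beta(3,8,5)=28-5\cdot 6=-2<0$), so you have not excluded $B(3,8,5)$. Thus the claimed match is simply wrong for $d=8$.

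Both gaps are easy to close with the right references. For $d=7$, use Lemma~\ref{l2.5} (equivalently Theorem~\ref{thm4.8}(iii)): since $\mu=\tfrac{7}{3}<\tfrac{5}{2}$, any semistable $E$ with $h^0(E)>3+\tfrac{1}{3}(7-3)=\tfrac{13}{3}$ would force $\mu\ge\tfrac{5}{2}$, a contradiction; hence $h^0(E)\le 4$. For $d=8$, use Corollary~\ref{c4.4}: since $\tfrac{5}{2}<\tfrac{8}{3}<3$, one gets the strict inequality $h^0(E)<d-n=5$, i.e.\ $h^0(E)\le 4$, which is exactly what is needed. (The paper's own proof handles these two values simultaneously via Corollary~\ref{c5.14} for $k=4$ and the BN-map together with Corollary~\ref{c4.4} for $k=5$.)

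A very minor point: your Serre duality reduction sends $(d,k)$ to $(18-d,\,k-d+9)$, and for $10\le d\le 17$ with small $k$ the second coordinate can be $\le 0$. In that range $d-9\ge k$, so Riemann--Roch already gives $h^0(E)\ge d-9\ge k$ for every stable $E$ and $B(3,d,k)=M(3,d)\ne\emptyset$; it is worth saying this explicitly.
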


\begin{proof}
For $k \leq 3$, this follows from Proposition \ref{p1.1}. For $k = 4$, it follows from Corollary \ref{c5.14}.

Now suppose $k = 5$. We have $\beta(3,d,5) = 5d -42$. So $\beta(3,d,5) \geq 0$ if and only if $d \geq 9$. If $d \leq 8$, then $B(3,d,5) = \emptyset$ (see the BN-map
and Corollary \ref{c4.4}). 
Moreover, $B(3,9,5) = \emptyset$ by Lemma \ref{l2.6}, but $\widetilde B(3,9,5) \neq \emptyset$, since $B(1,3,2)$ and $B(2,6,3)$ are both non-empty. 
The rest follows from Serre duality and Riemann-Roch.
\end{proof}

\begin{prop}
Suppose $0 < d < 24$. Then 
$B(4,d,k) \neq \emptyset$ if and only if $\beta(4,d,k) \geq 0$, except if $(d,k) = (4,4), (20,12), (10,6), (12,7), (14,8)$ and possibly $(12,6)$. 
In the exceptional cases, $\beta(4,d,k) \ge 0$ and $\widetilde B(4,d,k) \neq \emptyset$, but $B(4,d,k) = \emptyset$ except possibly when $(d,k) = (12,6)$.
\end{prop}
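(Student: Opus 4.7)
The plan is to reduce the degree range $0 < d < 24$ to $0 < d \leq 12 = n(g-1)$ via Serre duality, under which $(d, k) \leftrightarrow (24-d, k+12-d)$: this pairing preserves $\beta(4,d,k) = 49 - k(k-d+12)$, swaps the exceptional pairs $(4,4) \leftrightarrow (20,12)$ and $(10,6) \leftrightarrow (14,8)$, and fixes both $(12,7)$ and $(12,6)$. After this reduction I would proceed by case analysis on $k$, drawing from the non-emptiness constructions of Sections \ref{nonh} and \ref{exist} and the upper bounds of Section \ref{upper}.

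For $k \leq 3$, Proposition \ref{p1.1} yields $B(4,d,k) \neq \emptyset$ for every $d \geq 1$. At $k = 4$ the only obstruction is $(n,n,n) = (4,4,4)$, excluded from $B$ by Proposition \ref{p1.1} but realized in $\widetilde B$ by $\cO(p_1) \oplus \cdots \oplus \cO(p_4)$ at four distinct points; for $5 \leq d \leq 8$ I would invoke Propositions \ref{p1.1} and \ref{p1.2}, and for $9 \leq d \leq 12$ Proposition \ref{p3.7}(i) (since $\tfrac{n}{2}+\tfrac{d}{4} \geq 4$ throughout). For $k = 5$ the range is $8 \leq d \leq 12$, covered respectively by Proposition \ref{p1.2} at $d = 8$, Proposition \ref{p3.3} with $r = s = 1$ at $d = 9$, Proposition \ref{p5.13} at $d = 10$ and $d = 11$, and Proposition \ref{p3.7}(i) at $d = 12$.

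The exceptional phenomena concentrate at $k = 6, 7$. For $(10,6)$, Proposition \ref{p6.1} applies exactly (one checks $k = n + \tfrac{1}{3}(d-n) = 6$, $d/n = 5/2$ and $n = 4 > 2$), giving $B(4,10,6) = \emptyset$, while $E_L^{\oplus 2}$ of rank 4, degree 10 and $h^0 = 6$ witnesses $\widetilde B(4,10,6) \neq \emptyset$; the other $k = 6$ point, $(4,11,6)$, is Example \ref{ex4.12} with $r = 2$. For $(12,7)$ emptiness is immediate from Lemma \ref{l2.6} ($h^0 \leq 3n/2 = 6$ for stable slope-3 bundles), and $T^{\oplus 2} \oplus T'^{\oplus 2}$---or $T^{\oplus 4}$ when $T \simeq T'$---lies in $\widetilde B(4,12,7)$ with $h^0 = 8$. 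The Serre-dual pairs $(20,12)$ and $(14,8)$ inherit both conclusions automatically.

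The main obstacle, and the reason the statement flags $(12,6)$ as only \emph{possibly} exceptional, is this last pair. At $\mu = 3$ the bound of Lemma \ref{l2.6} is saturated exactly by $k = 6$, so it does not rule out stability, yet none of the constructions in Section \ref{exist} produces such a bundle: Proposition \ref{p3.7}(i) caps $k$ at $\tfrac{n}{2}+\tfrac{d}{4} = 5$ when $d = 3n$, Example \ref{ex4.12} gives $(2r,5r+1,3r)$ which at $r = 2$ is $(4,11,6)$ rather than $(4,12,6)$, and the rank-3 trick of Proposition \ref{p5.13} (elementary transformations of $D(K_C)(p)$) has no obvious rank-4 analogue at slope 3. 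Semistable existence is easy---$T^{\oplus 2} \oplus T'^{\oplus 2}$ has $h^0 = 8 \geq 6$---so I would record $(12,6)$ as undecided, in agreement with the statement.
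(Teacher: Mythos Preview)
Your proposal is correct and follows essentially the same approach as the paper: reduce to $0<d\le 12$ via Serre duality, then split by $k$, invoking Propositions~\ref{p1.1}, \ref{p1.2}, \ref{p3.7}, \ref{p6.1}, Lemma~\ref{l2.6}, and Example~\ref{ex4.12} at exactly the same junctures. The paper compresses your $k=5$ case analysis into a single citation of Corollary~\ref{c5.14}, and (like you) is terse about why Proposition~\ref{p1.1} alone suffices for small $k$ across the full range $0<d\le 12$; otherwise the arguments coincide.
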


\begin{proof}
For $k \leq 4$, this follows from Proposition \ref{p1.1}. For $k = 5$, it follows from Corollary \ref{c5.14}. 

For $k = 6$, we have $\beta (4,d,6) = 6d -59 \geq 0$
if and only if $d \geq 10$. In fact, $B(4,10,6) = \emptyset$ by Proposition \ref{p6.1} and $B(4,11,6) \neq \emptyset$ by Example \ref{ex4.12}.
It is easy to check that $\widetilde B(4,10,6)$ and $\widetilde B(4,12,6)$ are non-empty.

For $k = 7$, we have $\beta(4,d,7) = 7d - 84 \geq 0$ if and only if $d \geq 12$. 
If $d \leq 11$, then $B(4,d,7) = \emptyset$ (see the BN-map and Corollary \ref{c4.4}). Moreover, $B(4,12,7) = \emptyset$ by Lemma \ref{l2.6}.
It is easy to check that $\widetilde B(4,12,7) \neq \emptyset$.
The rest follows from Serre duality and Riemann-Roch.
\end{proof}

\section{BN-map for genus 4}\label{map}

The following figure is the most significant part of the BN-map for a non-hyperelliptic curve of genus 
4.

\begin{tikzpicture}
\path[draw][->] (2,-0.2) -- (2,8.2) node[pos=1.02,above] {$\lambda$} node[pos=.5,left] {$3/2$} node[pos=.32,left] {$4/3$} node[pos=0.98,left] {$2$} node[pos=0.27,left] {$5/4$}
                node[pos=.37,left] {$D(K_C)$};
\path[draw][->] (0,0 ) -- (10.2,0) node[pos=1.02,right] {$\mu$} node[pos=.6,below] {$5/2$} node[pos=0.21,below] {$2$} node[pos=0.98,below] {$3$} node[pos=0,left] {$1$}; 
               
\path[draw] (10, 0) -- (10,8) node[pos=1,right] {$T,T'$}; 
\path[draw] (2, 0) -- (10,2);
\path[draw][dashed] (2,8) -- (10,8);
\path[draw][dashed] (2,6) -- (10,6)  node[pos=0,left] {$7/4$};
\path[draw] (2,2.67) -- (6,4)  node[pos=0.5, sloped,above] {Lem.3.7};
\path[draw] (2, 0) -- (10,4)   node[pos=0.5, sloped,above] {Prop.5.1};
\path[draw] (6,4) -- (10,8) node[pos=0.1,left] {$E_L$} node[pos=0.5, sloped,above] {$T \simeq T'$};
\path[draw] (6,4) -- (10,6) node[pos=0.5,sloped,above] {$T \not \simeq T'$}; 
\path[draw] (0,1.5) -- (2,2); 
\path[fill=black] (2,0)--(10,0)--(10,2)--cycle;
\path[fill=black] (0,0)--(2,0)--(2,2)--(0,1.5)-- cycle;
\path[shade,draw] (2,0)--(10,2)--(10,4)-- cycle;
\fill (10,2.67) circle (2pt);
\fill (10,8) circle (2pt);
\fill (6,4) circle (2pt);
\fill (2,2.67) circle (2pt);
\fill (10,4) circle (2pt);
\path[draw][dashed] (6,0) -- (6,8);
\path[draw][dashed] (2,4) -- (10,4) node[pos=1,right]{$(5.8)$};
\path[draw][dashed] (2,2.67) -- (10,2.67) node[pos=1,right]{$D(K_C)(p)$};
\path[draw][dashed] (2,2) -- (10,2);

\fill (4,2) circle (1pt);
\fill (3,2) circle (1pt);
\fill (2.67,2) circle (1pt);
\fill (2.5,2) circle (1pt);
\fill (2.44,2) circle (1pt);
\fill (2.4,2) circle (1pt);
\fill (2.36,2) circle (1pt);
\fill (2.34,2) circle (1pt);
\fill (2.32,2) circle (1pt);
\path[draw,line width=1.5pt] (2.32,2) -- (2,2);

\fill (6,2) circle (1pt);
\fill (7.34,2.67) circle (1pt);
\fill (5.2,1.6) circle (1pt);
\fill (4.67,1.34) circle (1pt);
\fill (4.29,1.15) circle (1pt);
\fill (3.78,0.89) circle (1pt);
\fill (3.45,0.73) circle (1pt);
\fill (3.23,0.62) circle (1pt);
\fill (3.07,0.53) circle (1pt);
\fill (2.94,0.47) circle (1pt);
\fill (2.84,0.42) circle (1pt);
\fill (2.76,0.38) circle (1pt);
\fill (2.7,0.35) circle (1pt);
\fill (2.64,0.32) circle (1pt);
\fill (4,1) circle (1pt);
\fill (3.6,0.8) circle (1pt);
\fill (3.33,0.67) circle (1pt);
\fill (3.14,0.57) circle (1pt);

\path[draw,line width=1.5pt] (2,0) -- (2.59,0.295);

\fill (8,4) circle (1pt);
\fill (7.36,4) circle (1pt);
\fill (7,4) circle (1pt);
\fill (6.8,4) circle (1pt);
\fill (6.67,4) circle (1pt);
\fill (6.57,4) circle (1pt);
\fill (6.5,4) circle (1pt);
\fill (6.44,4) circle (1pt);
\fill (6.4,4) circle (1pt);
\path[draw,line width=1.6pt] (6,4) -- (6.4,4);

\fill (4,2) circle (1pt);
\fill (3.6,1.6) circle (1pt);
\fill (3.33,1.33) circle (1pt);
\fill (3.14,1.14) circle (1pt);
\fill (3,1) circle (1pt);
\fill (2.89,0.89) circle (1pt);
\fill (2.8,0.8) circle (1pt);
\fill (2.72,0.72) circle (1pt);
\fill (2.67,0.67) circle (1pt);
\fill (2.62,0.62) circle (1pt);
\fill (2.57,0.57) circle (1pt);
\fill (2.53,0.53) circle (1pt);
\fill (2.49,0.49) circle (1pt);
\path[draw,line width=1.5pt] (2.49,0.49) -- (2,0);

\fill (4.67,2.67) circle (1pt);
\fill (3.33,2.67) circle (1pt);
\fill (2.89,2.67) circle (1pt);

\fill (8.54,5.09) circle (1pt);
\fill (9.11,5.33) circle (1pt);
\fill (9.5,5.5) circle (1pt);
\fill (9.65,5.57) circle (1pt);
\fill (9.73,5.6) circle (1pt);
\fill (9.78,5.62) circle (1pt);
\fill (9.82,5.64) circle (1pt);
\fill (9.84,5.65) circle (1pt);
\fill (9.87,5.66) circle (1pt);
\fill (9.89,5.67) circle (1pt);
\path[draw,line width=1.5pt] (9.89,5.67) -- (10,5.71);

\fill (4,2) circle (1pt);
\fill (4.67,2.67) circle (1pt);
\fill (5,3) circle (1pt);
\fill (5.2,3.2) circle (1pt);
\fill (5.33,3.33) circle (1pt);
\fill (5.42,3.42) circle (1pt);
\fill (5.5,3.5) circle (1pt);
\fill (5.56,3.56) circle (1pt);
\fill (5.6,3.6,5.67) circle (1pt);
\fill (5.64,3.64) circle (1pt);
\fill (5.67,3.67) circle (1pt);
\fill (5.69,3.69) circle (1pt);
\path[draw,line width=1.5pt] (5.69,3.69) -- (6,4);

\fill (2.8,2.4) circle (1pt);
\fill (2.73,2.18) circle (1pt);
\fill (2.67,2) circle (1pt);
\fill (2.62,1.85) circle (1pt);
\fill (2.57,1.71) circle (1pt);
\fill (2.53,1.6) circle (1pt);
\fill (2.5,1.5) circle (1pt);
\fill (2.47,1.41) circle (1pt);
\fill (2.44,1.33) circle (1pt);
\fill (2.4,1.2) circle (1pt);
\fill (2.38,1.14) circle (1pt);
\fill (2.36,1.09) circle (1pt);
\fill (2.34,1.04) circle (1pt);
\path[draw,line width=1.5pt] (2.34,1.04) -- (2,0);

\fill (3.14,2.29) circle (1pt);
\fill (3,2) circle (1pt);
\fill (2.89,1.78) circle (1pt);
\fill (2.8,1.6) circle (1pt);
\fill (2.73,1.45) circle(1pt);
\fill (2.67,1.33) circle (1pt);
\fill (2.62,1.23) circle (1pt);
\fill (2.57,1.14) circle (1pt);
\fill (2.53,1.07) circle (1pt);
\fill (2.5,1) circle (1pt);
\fill (2.47,0.94) circle (1pt);
\fill (2.44,0.89) circle (1pt);
\fill (2.42,0.84) circle (1pt);
\fill (2.4,0.8) circle (1pt);
\path[draw,line width=1.5pt] (2.4,0.8) -- (2,0);

\fill (2.4,1.6) circle (1pt);
\fill (2.33,1.33) circle (1pt);
\fill (2.29,1.14) circle (1pt);
\fill (2.25,1) circle (1pt);
\fill (2.22,0.89) circle(1pt);
\fill (2.2,0.8) circle (1pt);
\fill (2.18,0.73) circle (1pt);
\fill (2.17,0.67) circle (1pt);
\path[draw,line width=1.5pt] (2.17,0.67) -- (2,0);

\fill (2.32,1.6) circle (1pt);
\fill (2.27,1.33) circle (1pt);
\fill (2.23,1.14) circle (1pt);
\fill (2.2,1) circle (1pt);
\fill (2.18,0.89) circle(1pt);
\fill (2.16,0.8) circle (1pt);
\fill (2.18,0.73) circle (1pt);
\fill (2.145,0.67) circle (1pt);
\path[draw,line width=1.5pt] (2.145,0.67) -- (2,0);

\fill (3.23,1.23) circle (1pt);
\fill (3.14,1.14) circle (1pt);
\fill (3.07,1.07) circle (1pt);
\fill (3,1) circle (1pt);
\fill (2.94,0.94) circle (1pt);
\fill (2.89,0.89) circle (1pt);
\fill (2.84,0.84) circle (1pt);
\fill (2.8,0.8) circle (1pt);
\fill (2.76,0.76) circle (1pt);
\fill (2.73,0.73) circle (1pt);
\fill (2.7,0.7) circle (1pt);

\fill (2.84,1.26) circle (1pt);
\fill (2.8,1.2) circle (1pt);
\fill (2.76,1.14) circle (1pt);
\fill (2.73,1.09) circle (1pt);
\fill (2.7,1.04) circle (1pt);
\fill (2.67,1) circle (1pt);
\fill (2.64,0.96) circle (1pt);
\fill (2.62,0.92) circle (1pt);
\path[draw,line width=1.5pt] (2.62,0.92) -- (2,0);

\end{tikzpicture}

The map plots $\lambda = \frac{k}{n}$ against $\mu = \frac{d}{n}$. The solid lines indicate 
the upper bounds for non-emptiness, given by Theorem \ref{thm4.8}. The shaded areas consist of 
points $(\mu, \lambda)$ for which there exists $(n,d,k)$ with 
$$
\frac{d}{n} = \mu,\quad  \frac{k}{n} =
 \lambda \quad \mbox{and} \quad B(n,d,k) \neq \emptyset.
 $$
 The black areas are given by Propositions \ref{p1.1}, \ref{p1.2} and  \ref{p3.7} (i) 
and all $B(n,d,k)$ corresponding to points in these areas are non-empty. In the grey area, which 
corresponds to Proposition \ref{p3.7} (iii), there are some $(n,d,k)$ for which possibly 
$B(n,d,k) = \emptyset$. 
However, for any $(\mu,\lambda)$ in this area there exist $(n,d,k)$ with $\mu = \frac{d}{n},\; \lambda = \frac{k}{n}$ such that $B(n,d,k) \neq \emptyset$.

The dots represent points for which some $B(n,d,k) \neq \emptyset$ 
and arise from \eqref{eq3.3}, \eqref{eq5.13}, Example \ref{ex3.7}, \eqref{e5.5}, Example \ref{ex4.12}, Example \ref{ex5.10} and Proposition \ref{p5.13}. Only a selection of the points is represented.

The BN-curve given by $\lambda(\lambda - \mu  + 3) = 3$ (or $\beta(n,d,k) = 1$) passes through the points $(2, \frac{-1 +\sqrt{13}}{2}),
( \frac{5}{2},\frac{3}{2})$ and $(3, \sqrt{3})$ and lies slightly
below the upper bound lines for the case $T \not \simeq T'$. We did not include it in the figure, 
because it is so close to these lines.
All the bundles constructed in this paper have $\beta(n,d,k) \geq 0$, but this does not rule out the possibility that $B(n,d,k)$ could be non-empty for some 
$(n,d,k)$ with $\beta(n,d,k) < 0$.

\end{document}